\documentclass[a4paper]{amsart}
\usepackage{latexsym,bm,stmaryrd}
\usepackage{amsmath,amsthm,amsfonts,amssymb,mathrsfs}

\usepackage{times,amssymb,amsmath,mathrsfs}







\let\<=\langle
\let\>=\rangle
\def\({\big(}
\def\){\big)}


\let\Sym=\BS

\newcommand{\N}{\mathbb N}
\newcommand{\Z}{\mathbb Z}

\newcommand{\HH}{\mathscr{H}}

\DeclareMathAlphabet{\mathpzc}{OT1}{pzc}{m}{it}

\newcommand{\Q}{\mathbb Q}

\def\s{{\mathfrak s}}
\def\t{{\mathfrak t}}

\newcommand{\eps}{\varepsilon}

\renewcommand\t{\mathfrak{t}}

\DeclareMathOperator{\TPro}{TProd}

\DeclareMathOperator{\Pro}{Prod}

\newcounter{main}
\theoremstyle{plain}

\numberwithin{equation}{section}
\newtheorem{prop}[equation]{Proposition}
\newtheorem{thm}[equation]{Theorem}
\newtheorem{cor}[equation]{Corollary}
\newtheorem{lem}[equation]{Lemma}

\theoremstyle{definition}
\newtheorem{dfn}[equation]{Definition}
\theoremstyle{remark}
\newtheorem{rem}[equation]{Remark}

  {\refstepcounter{equation}\trivlist
   \item[\hskip\labelsep\theequation.~\textbf{Example#1}\space]
   \ignorespaces
  }{\unskip\nobreak\hfil%
    \penalty50\hskip2em\hbox{}\nobreak\hfil$\Diamond$%
    \parfillskip=0pt\finalhyphendemerits=0\penalty-100\endtrivlist}

{\catcode`\|=\active
  \gdef\set#1{\mathinner{\lbrace\,{\mathcode`\|"8000%
                                   \let|\midvert #1}\,\rbrace}}
}
\def\midvert{\egroup\mid\bgroup}


\begin{document}
\title[Involutions in Weyl groups]{On involutions in Weyl groups}
\subjclass[2010]{20C08}
\keywords{Weyl groups, Hecke algebras, twisted involutions}

\author[J. Hu]{Jun Hu}
  \address{Department of Mathematics\\
  Zhejiang University\\
  Hangzhou, 310027, P.R. China}
  \email{junhu303@qq.com}
  
\author[J. Zhang]{Jing Zhang}
   \address{School of Mathematics and Statistics\\
  Beijing Institute of Technology\\
  Beijing, 100081, P.R. China}
\email{ellenbox@bit.edu.cn}
  
\begin{abstract}
Let $(W,S)$ be a Coxeter system and $\ast$ be an automorphism of $W$ with order $\leq 2$ such that $s^{\ast}\in S$ for any $s\in S$.
Let $I_{\ast}$ be the set of twisted involutions relative to $\ast$ in $W$. In this paper we consider the case when $\ast=\text{id}$ and study the braid
$I_\ast$-transformations between the reduced $I_\ast$-expressions of involutions. If $W$ is the Weyl group of type $B_n$ or $D_n$, we explicitly describe a
finite set of basic braid $I_\ast$-transformations for all $n$ simultaneously, and show that any two reduced $I_\ast$-expressions for a given involution can
be transformed into each other through a series of basic braid $I_\ast$-transformations. In both cases, these basic braid $I_\ast$-transformations consist of
the usual basic braid transformations plus some natural ``right end transformations" and plus exactly one extra transformation. The main result generalizes our
previous work for the Weyl group of type $A_{n}$.
\end{abstract}

\maketitle


\section{Introduction}

Let $(W,S)$ be a fixed Coxeter system with length function $\ell: W\rightarrow\mathbb{N}$. If $w\in W$ then by definition
$$\ell(w):=\min\{k|w=s_{i_1}\dots s_{i_k} \text{ for some }s_{i_1},\dots,s_{i_k}\in S\}.$$
Let $\leq$ be the Bruhat partial ordering on $W$ defined with respect to $S$. Let $\ast$ be a fixed automorphism of $W$ with order $\leq 2$ and such that $s^{\ast}\in S$ for any $s\in S$.

\begin{dfn} \label{twistedinvolutions} We define $I_{\ast}:=\bigl\{w\in W\bigm|w^{\ast}=w^{-1}\bigr\}$.
The elements of $I_{\ast}$ will be called twisted involutions relative to $\ast$.
\end{dfn}

If $\ast=\text{id}_W$ (the identity automorphism on $W$), then the elements of $I_{\ast}$ will be called involutions.

\begin{dfn} \label{twistedinvolutions2} For any $w\in I_\ast$ and $s\in S$, we define $$
s\ltimes w:=\begin{cases} sw &\text{if $sw=ws^\ast$;}\\
sws^\ast &\text{if $sw\neq ws^\ast$.}
\end{cases}
$$
For any $w\in I_\ast$ and $s_{i_1},\cdots,s_{i_k}\in S$, we define $$
s_{i_1}\ltimes s_{i_2}\ltimes\cdots\ltimes s_{i_k}\ltimes w:=s_{i_1}\ltimes\bigl(s_{i_2}\ltimes\cdots\ltimes (s_{i_k}\ltimes w)\cdots\bigr) .
$$
\end{dfn}
It is clear that $s\ltimes w\in I_\ast$ whenever $w\in I_\ast$ and $s\in S$. Note that $\ltimes$ is not associative. So the above convention for how to interpret expressions without parentheses is nontrivial and meaningful.

\begin{dfn} \label{Ireduced} (\cite{RS1,Hu2,Mar,HMP2}) Let $w\in I_\ast$. If $w=s_{i_1}\ltimes s_{i_2}\ltimes\cdots\ltimes s_{i_k}$, where $k\in\N$, $s_{i_j}\in S$ for each $j$, then $(s_{i_1},\cdots,s_{i_k})$ is called an $I_\ast$-expression for $w$. Such an $I_\ast$-expression for $w$ is reduced if its length $k$ is minimal.
\end{dfn}
We regard the empty sequence $()$ as a reduced $I_\ast$-expression for $w=1$. It follows by induction on $\ell(w)$ that every element $w\in I_\ast$ has a reduced $I_\ast$-expression.

The phrase ``$I_\ast$-expression" dates originally to Marberg \cite{Mar}. Hultman \cite{Hu2} uses the right-handed version of $I_\ast$-expression and call it $\underline{S}$-expression. In recent papers, Hamaker, Marberg and Pawlowski have started calling reduced $\underline{S}$-expressions ``involution words". Going much further back, reduced $\underline{S}$-expressions and involution words are both the same as what Richardson and Springer called ``admissible sequences" in \cite[Section 3]{RS1}. Our notation in the current paper follows the conventions from \cite{Mar}.

A well-known classical fact of Matsumoto (\cite{Mat}) says that any two reduced expressions for an element in $W$ can be transformed into each other through a series of basic braid transformations. We are interested in finding the right analogue of this fact for twisted involution relative to $\ast$ with respect to the operation ``$\ltimes$". In this paper we consider the case when $\ast=\text{id}$. If $W$ is the Weyl group of type $B_n$ or type $D_n$, we identify a set of basic braid $I_\ast$-transformations which span and preserve the sets of reduced $I_\ast$-expressions for any involution. These results generalize our earlier work in \cite{HZH1} for the Weyl group of type $A_{n}$. Note that these generalizations are non-trivial in the sense that the basic braid $I_\ast$-transformations for the Weyl group of types $B_n$ and $D_n$ which we identify contain not only the usual basic braid transformations plus some natural ``right end transformations" but also one extra transformations which do not directly related to the usual basic braid transformations; see the last relation in Definition \ref{braid1} and \ref{braid1B}. This is a new phenomenon which does not happen in type $A$ case; compare \cite[Definition 2.12]{HZH1}.

There have been a number of important works on algebraic and combinatorial properties of involutions in Coxeter groups. They have arisen independently in a few different geometric contexts; see \cite{RS1, RS2, Hu1, Hu2, CJ, CJW, HMP1, HMP2, WY}. Richardson and Springer \cite{RS1} first initiates the study of Bruhat order restricted to involutions in finite Weyl group, which naturally leads to the consideration of $I_\ast$-expression (which they called ``admissible sequence"). They proved (\cite[Lemma 3.16]{RS1}) that the set of reduced $I_\ast$-expression for a given twisted involution is closed under the ordinary braid relations for $(W,S)$; see also \cite[Proposition 1.4]{HMP2} for an equivalent statement for general Coxeter groups. Can, Joyce and Wyser have classified these sets for each involution in the symmetric group. It is a natural (and nontrivial) open problem to use the main result of the current paper to extend Can, Joyce and Wyser's results to types $B$ and $D$. The paper \cite{HMP2} of Hamaker, Marberg and Pawlowski also proves an analogue of Matsumoto's theorem for the right handed versions of reduced $I_\ast$-expressions. Their result applies to arbitrary Coxeter groups, but requires a potentially unbounded number of extra relations in addition to the ordinary braid relations and the ``right end transformation" described in Remark \ref{rem0} and \ref{rem0B}. We thank the anonymous referee for the detailed explanation of these facts to us. The main result in this paper show that in types $B$ and $D$ one can ignore all but one of these extra relations.

Our motivation for the study of reduced $I_\ast$-expressions for involution comes from a conjecture of Lusztig.  Let $v$ be an indeterminate over $\Z$ and $u:=v^2$. Set $A:=\Z[u^2,u^{-2}]$, $\mathcal{A}:=\Z[u,u^{-1}]$. Let $\HH_{u^2}$ be the one-parameter Iwahori--Hecke algebra associated to $(W,S)$ with Hecke parameter $u^2$ and defined over $A$ (cf. \cite{Hum}). Let $\mathcal{H}_u:=\mathcal{A}\otimes_{A}\HH_{u^2}$. We abbreviate $1_{\mathcal{A}}\otimes _A T_w$ as $T_w$ for each $w\in W$. Let $M$ be the free $\mathcal{A}$-module with basis $\{a_{w}|w\in I_{\ast}\}$. An $\mathcal{H}_u$-module structure on $M$ was introduced by Lusztig and Vogan (\cite{LV1}) in the special case when $W$ is a Weyl group or an affine Weyl group, and by Lusztig (\cite{Lu1}) in the general case. When $u$ is specialized to $1$, the module $M$ was introduced more than fifteen years ago by Kottwitz. Kottwitz found the module by analyzing Langlands' theory of stable characters for real groups. He gave a conjectural description of it (later established by Casselman) in terms of the Kazhdan-Lusztig left
cell representations of W. For these reasons it was clear that $M$ was an interesting, subtle, and important object. In \cite[3.4(a)]{Lu2} Lusztig defined $X_{\emptyset}:=\sum_{x\in W, x^\ast=x}u^{-\ell(x)}T_x\in\mathcal{H}_u$ and he conjectured that there is a unique isomorphism of $(\Q(u)\otimes_{\mathcal{A}}\mathcal{H}_u)$-modules
$\eta: \Q(u)\otimes_{\mathcal{A}}M\cong (\Q(u)\otimes_{\mathcal{A}}\mathcal{H}_u)X_{\emptyset}$ such that $\eta(a_1)=X_{\emptyset}$.

In \cite{HZH1}, we give a proof of this conjecture when $\ast=\text{id}_W$ and $W$ is the Weyl group of type $A_{n}$ for any $n\in\N$. The key ingredient in the proof is to prove an analogue of Matsumoto's result for reduced $I_\ast$-expressions of involutions. We announced in that paper that the same strategy should work for the Weyl groups of other types. Later Lusztig proved his conjecture (in \cite{Lu3}) for any Coxeter group and any $\ast$ by using a completely different argument. Despite this fact, it is still interesting in itself to generalize Matsumoto's result for reduced $I_\ast$-expressions of involutions to Weyl groups of arbitrary types (other than type $A$). In this paper we give this generalization for the Weyl groups of types $B_n$ and $D_n$ by finding a finite set of basic braid $I_\ast$-transformations on reduced $I_{\ast}$-expressions for involutions, which can be described for all $n$ simultaneously and whose size depends quadratically on $n$.

The paper is organised as follows. In Section 2 we recall some preliminary results on reduced $I_{\ast}$-expressions for twisted involutions relative $\ast$. Based on the work of Lusztig \cite[1.2, 1.4]{Lu1}, we give a case-by-case discussion after Lemma \ref{7cases} when $\ast=\text{id}$ which will be used in the next two sections. In Section 3 we consider the Weyl group $W(D_n)$ of type $D_n$ and give the definition of basic braid $I_\ast$-transformation on reduced $I_{\ast}$-expressions for involutions in $W(D_n)$ in Definition \ref{braid1}. In Section 4 we consider the Weyl group $W(B_n)$ of type $B_n$ and give the definition of basic braid $I_\ast$-transformation on reduced $I_{\ast}$-expressions for involutions in $W(B_n)$ in Definition \ref{braid1B}. The main results are Theorem \ref{mainthm0} and  \ref{mainthm0B}, where we show that any two reduced $I_\ast$-expressions for an involution in $W\in\{W(D_n),W(B_n)\}$ can be transformed into each other through a series of braid $I_\ast$-transformations. In Section 5 we use the main result in Section 3 and Section 4 to show that $\eta$ is a well-defined surjective $(\Q(u)\otimes_{\mathcal{A}}\mathcal{H}_u)$-module homomorphism when $W$ is the Weyl group of type $B_n$ or $D_n$ and $\ast=\text{id}$.
\medskip

\section*{Acknowledgements}

Both authors were supported by the National Natural Science Foundation of China (NSFC 11525102).  The first author was also supported by the research fund
from Zhejiang University. Both authors are grateful to the anonymous referee for his/her careful reading and very helpful comments and suggestions.

\bigskip
\section{reduced $I_{\ast}$-expressions}

Let $(W,S)$ be a fixed Coxeter system with length function $\ell: W\rightarrow\mathbb{N}$. For any $s\neq t\in S$, let $m_{s,t}=m_{t,s}\in[2,\infty]$ be the order of $st$. The following facts can all be found in \cite{Hu1,Hu2} and \cite{HZH1}.

\begin{lem} \label{square} \text{(\cite{Hu1})} For any $w\in I_\ast$ and $s\in S$, we have that $$
s\ltimes(s\ltimes w)=w .
$$
\end{lem}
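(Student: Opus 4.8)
The plan is to prove the identity by a direct case analysis along the two branches of Definition \ref{twistedinvolutions2}, relying on nothing more than the relations $s^2=(s^\ast)^2=1$ (the latter holds since $s^\ast\in S$) together with the fact, noted just after Definition \ref{twistedinvolutions2}, that $s\ltimes w\in I_\ast$ whenever $w\in I_\ast$. The latter guarantees that $s\ltimes(s\ltimes w)$ is meaningful. The guiding principle is that the branch of the definition selected when we apply the \emph{second} operation $s\ltimes(-)$ is forced to agree with the branch selected by the first, so that the two applications of $s$ simply cancel.

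Concretely, I would split into the two cases of $s\ltimes w$. In the first case $sw=ws^\ast$, so $s\ltimes w=sw$; to evaluate $s\ltimes(sw)$ I must decide whether $s(sw)=(sw)s^\ast$. The left-hand side equals $w$ by $s^2=1$, while the right-hand side equals $sws^\ast=ws^\ast s^\ast=w$ using the case hypothesis and $(s^\ast)^2=1$. Hence the first branch applies again and $s\ltimes(sw)=s(sw)=w$. In the second case $sw\neq ws^\ast$, so $s\ltimes w=sws^\ast$; to evaluate $s\ltimes(sws^\ast)$ I must decide whether $s(sws^\ast)=(sws^\ast)s^\ast$. The left-hand side simplifies to $ws^\ast$ and the right-hand side to $sw$, so this equality holds if and only if $ws^\ast=sw$, which is exactly what the case hypothesis excludes. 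Thus the second branch applies again and $s\ltimes(sws^\ast)=s(sws^\ast)s^\ast=ws^\ast s^\ast=w$. Combining the two cases yields $s\ltimes(s\ltimes w)=w$ in all situations.

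The calculations themselves are routine cancellations; the one step I would check most carefully is the claim that the second application of $s\ltimes(-)$ falls into the same branch as the first. In each case this reduces precisely to comparing $sw$ with $ws^\ast$, and it is the defining hypothesis of the case (or its negation) that settles this. No separate verification of $I_\ast$-membership of the intermediate element is needed for the computation, though one may optionally record it as a sanity check: in the first case $sw\in I_\ast$ and in the second $sws^\ast\in I_\ast$, both following from $w^\ast=w^{-1}$.
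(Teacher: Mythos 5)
Your proof is correct. Note, however, that the paper itself supplies no argument for this lemma: it is stated with a citation to Hultman \cite{Hu1} and no proof environment follows, so there is nothing internal to compare your argument against. Your two-case verification is the standard elementary proof of this fact, and it is complete: in the branch $sw=ws^\ast$ you correctly show that $s(sw)=(sw)s^\ast$ (both sides equal $w$, using $s^2=(s^\ast)^2=1$), so the first branch of Definition \ref{twistedinvolutions2} is applied again and cancels; in the branch $sw\neq ws^\ast$ you correctly show that $s(sws^\ast)=ws^\ast$ and $(sws^\ast)s^\ast=sw$ cannot be equal, so the second branch is applied again and cancels. The point you single out as the crux --- that the second application of $s\ltimes(-)$ is forced into the same branch as the first --- is indeed the only non-mechanical step, and your reduction of it to the case hypothesis (or its negation) is exactly right. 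Your side remark that $s\ltimes w\in I_\ast$ (needed so that $s\ltimes(s\ltimes w)$ is even defined) is also appropriately handled, since the paper records this closure property immediately after Definition \ref{twistedinvolutions2}. What your approach buys is a self-contained proof from first principles, making the paper's reliance on \cite{Hu1} unnecessary for this particular statement.
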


It is well-known that every element $w\in I_\ast$ is of the form $w=s_{i_1}\ltimes s_{i_2}\ltimes\cdots\ltimes s_{i_k}$ for some $k\in\N$ and $s_{i_1},\cdots,s_{i_k}\in S$.

\begin{lem}\label{rankfunc}(\cite{Hu1}, \cite{Hu2}) Let $w\in I_\ast$. Any reduced $I_\ast$-expression for $w$ has a common length. Let $\rho: I_\ast\rightarrow\N$ be the map which assigns $w\in I_\ast$ to this common length. Then $(I_\ast,\leq)$ is a graded poset with rank function $\rho$. Moreover, if $s\in S$ then $\rho(s\ltimes w)=\rho(w)\pm 1$, and $\rho(s\ltimes w)=\rho(w)-1$ if and only if $\ell(sw)=\ell(w)-1$.
\end{lem}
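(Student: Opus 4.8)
The statement that all reduced $I_\ast$-expressions share a common length is immediate from Definition~\ref{Ireduced}, where ``reduced'' means ``of minimal length''; so I simply set $\rho(w)$ to be this minimal length. The real content is the local rule $\rho(s\ltimes w)=\rho(w)\pm1$, the descent characterisation, and the gradedness of $(I_\ast,\le)$. The plan is to first prove a \emph{length dichotomy} that ties each $\ltimes$-step to the ordinary length function and the Bruhat order on $W$, and then to bootstrap this into a strong exchange property for $I_\ast$-expressions, from which the remaining assertions follow by standard poset arguments.

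First I would record two symmetries available because $w\in I_\ast$: the automorphism $\ast$ preserves $S$ and hence $\ell$, and $w^\ast=w^{-1}$ gives $\ell(ws^\ast)=\ell(sw)$. With these, the commuting case $sw=ws^\ast$ is trivial: $s\ltimes w=sw$ and $\ell(s\ltimes w)=\ell(w)\pm1$. The crux is the non-commuting case $sw\neq ws^\ast$, where $s\ltimes w=sws^\ast\in I_\ast$; here I claim $\ell(sws^\ast)=\ell(w)\pm 2$, never $\ell(w)$, with sign equal to that of $\ell(sw)-\ell(w)$. Suppose, say, $\ell(sw)=\ell(w)+1$ but $\ell(sws^\ast)=\ell(w)$. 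Choosing a reduced word for $w$ and prepending $s$ gives a reduced word for $sw$; since right multiplication by $s^\ast$ then shortens it, the ordinary strong exchange property deletes one letter, and the two possibilities are ruled out because one forces $sw=ws^\ast$ (excluded) and the other forces $\ell(ws^\ast)=\ell(w)-1$ (contradicting $\ell(ws^\ast)=\ell(sw)=\ell(w)+1$). A mirror argument handles the descending case. This yields the dichotomy: for $w\in I_\ast$ and $s\in S$ one has $w<s\ltimes w$ precisely when $\ell(sw)>\ell(w)$, and $s\ltimes w<w$ precisely when $\ell(sw)<\ell(w)$; in particular $s\ltimes w\neq w$ always.

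Next I would promote the dichotomy to a strong exchange property for $I_\ast$-expressions: if $(s_{i_1},\dots,s_{i_k})$ is reduced for $w$ and $t\in S$ is a descent, that is $\ell(tw)<\ell(w)$ (equivalently $t\ltimes w<w$), then $t\ltimes w$ is obtained by deleting exactly one letter, $t\ltimes w=s_{i_1}\ltimes\cdots\ltimes s_{i_{p-1}}\ltimes s_{i_{p+1}}\ltimes\cdots\ltimes s_{i_k}$. This is proved by propagating $t$ through the expression from the left, at each stage using the dichotomy to decide whether the current step is commuting or non-commuting and invoking the ordinary exchange property of $W$; Lemma~\ref{square} keeps the cancellations consistent. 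Granting this, the local rule and gradedness fall out. Indeed $w=s\ltimes(s\ltimes w)$ forces $\rho(s\ltimes w)\in\{\rho(w)-1,\rho(w),\rho(w)+1\}$, and exchange rules out the middle value by exhibiting, in the up-step case, a reduced expression for $s\ltimes w$ of length exactly $\rho(w)+1$ (dually in the down-step case), so $\rho(s\ltimes w)=\rho(w)\pm1$ with sign governed by $\ell(sw)-\ell(w)$, which is exactly the descent characterisation. Finally, to see that $\rho$ is a rank function I would check that every Bruhat covering relation $u\lessdot w$ in $I_\ast$ is realised by a single up-step $w=s\ltimes u$; this uses a lifting property derived, once more, from the dichotomy and exchange, and forces all maximal chains in any interval of $(I_\ast,\le)$ to have the common length $\rho(w)-\rho(u)$.

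The main obstacle is the passage through the non-commuting step. Ruling out $\ell(sws^\ast)=\ell(w)$ is the one genuinely delicate computation, and, more seriously, the non-associativity of $\ltimes$ means that when one pushes a reflection through a reduced $I_\ast$-expression the commuting and non-commuting cases must be tracked separately at every node; making this bookkeeping precise is the technical heart of both the exchange property and the lifting property. Once exchange is in hand, the remaining assertions are routine inductions on $\ell(w)$.
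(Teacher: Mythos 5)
You should first note that the paper offers no proof of this lemma at all: it is quoted from Hultman (\cite{Hu1,Hu2}), just as Corollary \ref{length0} and Proposition \ref{exchange} are quoted from \cite{HZH1} and \cite{Hu2}, so your attempt can only be measured against the arguments in those references. Your first step is fine: the length dichotomy, proved by prepending $s$ to a reduced word of $w$ and applying strong exchange, is exactly the standard proof of Corollary \ref{length0}; and your derivation of the local rule $\rho(s\ltimes w)=\rho(w)\pm1$ and of the descent characterisation \emph{from} the $I_\ast$-exchange property is also correct.

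The genuine gap is your proof of the exchange property itself, and it is not merely unfinished bookkeeping: the inductive step that ``propagating $t$ through the expression from the left'' requires is false. Write $w=s\ltimes w_1$ with $(s_{i_2},\dots,s_{i_k})$ reduced for $w_1$ and let $t\neq s$ be a descent of both $w$ and $w_1$; propagation needs $t\ltimes(s\ltimes w_1)=s\ltimes(t\ltimes w_1)$, so that the deletion position in $w$ can be read off from the one in $w_1$. This fails already in $\Sym_4$: take $w=w_0=(1\,4)(2\,3)=s_1\ltimes s_2\ltimes s_1\ltimes s_3$, so $s=s_1$ and $w_1=s_2\ltimes s_1\ltimes s_3=(1\,3)(2\,4)$, and take $t=s_2$, a descent of both. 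Then $t\ltimes w_1=(1\,2)(3\,4)$, which is the deletion of the \emph{first} letter of $w_1$, and $s_1\ltimes(t\ltimes w_1)=(3\,4)$ --- not even a reduced sequence, since $s_1$ is a descent of $(1\,2)(3\,4)$ --- whereas $t\ltimes w=s_2w_0=(1\,4)=s_1\ltimes s_2\ltimes s_3$: the deletion sits at the \emph{third} letter of $(s_1,s_2,s_1,s_3)$, a position invisible to the induction on $w_1$. So the exchange property is true but cannot be reached by this left-to-right induction; the proofs in the literature run in the opposite direction to your plan: Hultman first proves gradedness of $(I_\ast,\leq)$ abstractly, as the fixed-point poset of an involutive automorphism of the Bruhat order \cite{Hu1}, then deduces lifting, the subword property, and only then exchange \cite{Hu2}; the alternative is a rank-two double-coset analysis as in Lemma \ref{7cases}. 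Since in your proposal gradedness is in turn deduced from exchange, this gap is structural rather than local: repairing it essentially forces you back onto Hultman's route.
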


\begin{cor} \label{length0} \text{(\cite[Corollary 2.6]{HZH1})} Let $w\in I_\ast$ and $s\in S$. Suppose that $sw\neq ws^\ast$. Then $\ell(sw)=\ell(w)+1$ if and only if $\ell(ws^\ast)=\ell(w)+1$, and if and only if $\ell(s\ltimes w)=\ell(w)+2$. The same is true if we replace ``$+$" by ``$-$".
\end{cor}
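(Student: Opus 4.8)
The plan is to isolate a single length-symmetry and one standard Coxeter-group fact, and to let the hypothesis $sw\neq ws^\ast$ do nothing but discard one degenerate configuration. First I would establish that $\ell(sw)=\ell(ws^\ast)$ holds with no hypothesis beyond $w\in I_\ast$. Since $w^\ast=w^{-1}$, and since $\ast$ has order $\leq 2$ and permutes $S$, hence preserves $\ell$ (a reduced word for $w$ maps to one for $w^\ast$ of the same length), while inversion also preserves $\ell$, the identity $(ws^\ast)^\ast=w^\ast s=w^{-1}s=(sw)^{-1}$ gives $\ell(ws^\ast)=\ell((sw)^{-1})=\ell(sw)$. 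This already proves the equivalence of the first two conditions, simultaneously for the ``$+$'' and the ``$-$'' versions. Because multiplication by a simple reflection alters length by exactly $1$, we have $\ell(sw)\in\{\ell(w)-1,\ell(w)+1\}$, so these two alternatives are exhaustive and mutually exclusive.

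Next I would bring in $\ell(s\ltimes w)=\ell(sws^\ast)$, which is legitimate precisely because $s\ltimes w=sws^\ast$ in the case $sw\neq ws^\ast$. The key input is the standard fact that for $x\in W$ and $s,t\in S$ with $\ell(sx)=\ell(x)+1=\ell(xt)$, either $\ell(sxt)=\ell(x)+2$ or $sxt=x$ (and dually, with all signs reversed, when $\ell(sx)=\ell(x)-1=\ell(xt)$). I would prove this by the Strong Exchange Property: prepend $s$ to a reduced word for $x$ to obtain a reduced word for $sx$; if right multiplication by $t$ fails to increase length it must delete exactly one letter of this word, and deleting a letter of the $x$-part would force $\ell(xt)<\ell(x)$, contradicting $xt>x$, so the deleted letter is the prepended $s$, i.e. $sxt=x$. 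The descent version is proved identically, starting from a reduced word for $x$ that begins with $s$.

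With these in hand the ``$+$'' chain is immediate. If $\ell(sw)=\ell(w)+1$ then by the first paragraph also $\ell(ws^\ast)=\ell(w)+1$, so applying the lemma with $x=w$, $t=s^\ast$ leaves only $\ell(sws^\ast)=\ell(w)+2$ or $sws^\ast=w$; but $sws^\ast=w$ is exactly $sw=ws^\ast$, excluded by hypothesis, so $\ell(s\ltimes w)=\ell(w)+2$. Conversely $\ell(sws^\ast)=\ell(w)+2$ forces $\ell(sw)\geq\ell(sws^\ast)-1=\ell(w)+1$, hence $\ell(sw)=\ell(w)+1$. The ``$-$'' chain follows verbatim using the descent version of the lemma, the degenerate outcome $sws^\ast=w$ again being killed by $sw\neq ws^\ast$.

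The one genuine obstacle is this product lemma, and inside it the single degenerate possibility $sxt=x$; ruling it out is the sole purpose of the hypothesis $sw\neq ws^\ast$. Everything else is bookkeeping with the two trivial facts $\ell(sw)=\ell(w)\pm1$ and $\ell(sws^\ast)=\ell(sw)\pm1$ together with the length-symmetry of the first step. This also refines Lemma \ref{rankfunc}: the ``$+$'' case is precisely when $\rho(s\ltimes w)=\rho(w)+1$.
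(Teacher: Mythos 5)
Your proof is correct, and there is nothing in the paper to compare it against: the paper states this corollary with only a citation to \cite[Corollary 2.6]{HZH1} and gives no argument, so your write-up supplies a complete, self-contained derivation from standard Coxeter theory. Each ingredient checks out. The length symmetry $\ell(sw)=\ell(ws^\ast)$ follows exactly as you say from $(ws^\ast)^\ast=w^\ast s=(sw)^{-1}$ (using $w^\ast=w^{-1}$, $s^{\ast\ast}=s$, and the fact that both $\ast$ and inversion preserve length), and it immediately gives the first equivalence in both the ``$+$'' and ``$-$'' forms since left or right multiplication by a simple reflection changes length by exactly one. Your lifting lemma (if $\ell(sx)=\ell(x)+1=\ell(xt)$ then $\ell(sxt)=\ell(x)+2$ or $sxt=x$, together with its descent dual) is the standard fact, and your Exchange Property proof of it is sound in both versions: a deletion inside the $x$-part of the word would contradict $xt>x$ (resp.\ would land $\ell(sxt)$ at the forced value $\ell(x)-2$), so the only degenerate outcome is deletion of the prepended $s$, i.e.\ $sxt=x$. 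Finally, $sws^\ast=w$ is equivalent to $sw=ws^\ast$, which the hypothesis excludes, and the converse directions via $\ell(sw)\geq\ell(sws^\ast)-1$ (resp.\ $\leq\ell(sws^\ast)+1$) are right. One structural point in your favour that deserves emphasis: you correctly treat the ``$-$'' chain with the separate descent lemma rather than trying to deduce it formally from the ``$+$'' chain; this is genuinely necessary, because parity alone allows $\ell(sws^\ast)=\ell(w)$, a possibility only the descent argument (with the hypothesis $sw\neq ws^\ast$) rules out.
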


\begin{cor} \label{deletion2} \text{(\cite[Corollary 2.7]{HZH1})} Let $w\in I_\ast$ and $s\in S$. Suppose that $\rho(w)=k$. If $sw<w$ then $w$ has a reduced $I_\ast$-expression which is of the form $s\ltimes s_{j_1}\ltimes\cdots\ltimes s_{j_{k-1}}$.
\end{cor}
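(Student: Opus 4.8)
The plan is to reduce everything to the two lemmas already available, namely the ``cancellation'' identity of Lemma \ref{square} and the characterisation of the rank function in Lemma \ref{rankfunc}. The first step is to translate the Bruhat-order hypothesis into a length statement. Since $s\in S$ is a simple reflection, for any $w\in W$ exactly one of $\ell(sw)=\ell(w)+1$ or $\ell(sw)=\ell(w)-1$ holds, and the condition $sw<w$ is by definition equivalent to $\ell(sw)=\ell(w)-1$. Thus the hypothesis $sw<w$ is simply the assertion that left multiplication by $s$ is length-decreasing on $w$.

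Next I would feed this into Lemma \ref{rankfunc}. That lemma tells us $\rho(s\ltimes w)=\rho(w)\pm 1$, with the minus sign occurring precisely when $\ell(sw)=\ell(w)-1$. Combined with the previous step, this yields $\rho(s\ltimes w)=\rho(w)-1=k-1$. Setting $v:=s\ltimes w$, we know $v\in I_\ast$ (as noted after Definition \ref{twistedinvolutions2}) and $\rho(v)=k-1$, so by definition of $\rho$ the element $v$ admits a reduced $I_\ast$-expression $v=s_{j_1}\ltimes\cdots\ltimes s_{j_{k-1}}$ of length exactly $k-1$.

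Finally I would recover $w$ by applying the cancellation Lemma \ref{square}, which gives $w=s\ltimes(s\ltimes w)=s\ltimes v$. Using the right-to-left convention of Definition \ref{twistedinvolutions2} for interpreting unparenthesised expressions, this reads $w=s\ltimes s_{j_1}\ltimes\cdots\ltimes s_{j_{k-1}}$, an $I_\ast$-expression of length $k$. Since $\rho(w)=k$ is the minimal length of any $I_\ast$-expression for $w$, this expression is automatically reduced and has exactly the required form.

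I do not expect any genuine obstacle here: the argument is a direct chaining of the two cited lemmas, and the only points requiring care are the correct reading of $sw<w$ as the length condition $\ell(sw)=\ell(w)-1$, and the fact that although $\ltimes$ is not associative, Lemma \ref{square} supplies exactly the identity $s\ltimes(s\ltimes w)=w$ needed to prepend $s$ to a reduced expression for $s\ltimes w$ while staying consistent with the right-to-left bracketing convention.
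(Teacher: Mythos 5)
Your proof is correct: translating $sw<w$ into $\ell(sw)=\ell(w)-1$, applying Lemma \ref{rankfunc} to get $\rho(s\ltimes w)=k-1$, taking a reduced $I_\ast$-expression for $s\ltimes w$, and prepending $s$ via the cancellation identity of Lemma \ref{square} is exactly the standard argument for this statement (which the paper itself only cites from \cite[Corollary 2.7]{HZH1}). The length count $\rho(w)=k$ then forces the resulting expression to be reduced, just as you say, so there is nothing to add.
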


\begin{dfn} \label{ReducedSequence} \text{(\cite[Definition 2.8]{HZH1})} Let $w\in I_\ast$ and $s_{i_1},\cdots,s_{i_k}\in S$. If $$
\rho(s_{i_1}\ltimes s_{i_2}\ltimes\cdots\ltimes s_{i_k}\ltimes w)=\rho(w)+k ,
$$
then we shall call the sequence $(s_{i_1},\cdots,s_{i_k},w)$ reduced, or $(s_{i_1},\cdots,s_{i_k},w)$  a reduced sequence.
\end{dfn}

In particular, any reduced $I_\ast$-expression for $w\in I_\ast$ is automatically a reduced sequence.
In the sequel, by some abuse of notations, we shall also call $(i_1,\cdots,i_k)$ a reduced sequence whenever $(s_{i_1},\cdots,s_{i_k})$ is a reduced sequence in the sense of Definition \ref{ReducedSequence}.

\begin{rem} Let $s_{i_1},\cdots,s_{i_k}\in S$ and $1\leq a\leq k$. We shall use the expression \begin{equation}\label{omit}
s_{i_1}\ltimes\cdots\ltimes s_{i_{a-1}}\ltimes s_{i_{a+1}}\ltimes\cdots\ltimes s_{i_k}
\end{equation}
to denote the element obtained by omitting ``$s_{i_a}\ltimes$" in the expression $s_{i_1}\ltimes\cdots\ltimes s_{i_k}$. In particular, if $a=1$ then (\ref{omit}) denotes the element $s_{i_2}\ltimes\cdots\ltimes s_{i_k}$; while if $a=k$ then (\ref{omit}) denotes the element $s_{i_1}\ltimes\cdots\ltimes s_{i_{k-1}}$.
\end{rem}

\begin{prop} \text{(Exchange Property,\,\, \cite[Prop. 3.10]{Hu2})} \label{exchange} Suppose $(s_{i_1}, \cdots,s_{i_k})$ is a reduced $I_\ast$-expression for $w\in I_\ast$ and that $\rho(s\ltimes s_{i_1}\ltimes s_{i_2}\ltimes\cdots\ltimes s_{i_k})<k$ for some $s\in S$. Then$$
s\ltimes s_{i_1}\ltimes s_{i_2}\ltimes\cdots\ltimes s_{i_k}=s_{i_1}\ltimes s_{i_2}\ltimes\cdots\ltimes{s_{i_{a-1}}}\ltimes{s_{i_{a+1}}}\ltimes\cdots\ltimes s_{i_k}
$$
for some $a\in\{1,2,\cdots,k\}$.
\end{prop}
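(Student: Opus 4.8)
The plan is to reduce everything to the single numerical fact that, by Lemma~\ref{rankfunc}, the hypothesis $\rho(s\ltimes w)<k$ forces $\rho(s\ltimes w)=k-1$, equivalently $\ell(sw)=\ell(w)-1$, so that $s$ is a genuine descent. I would then argue by induction on $k=\rho(w)$, imitating the classical proof of the ordinary Exchange Property. The base case $k=1$ is immediate: $w=s_{i_1}$ and $s\ltimes s_{i_1}=1$ is the empty expression, i.e.\ $s_{i_1}$ deleted. For the inductive step write $w=s_{i_1}\ltimes w'$, where $w'=s_{i_2}\ltimes\cdots\ltimes s_{i_k}$ is a reduced $I_\ast$-expression of rank $k-1$, and split according to whether $s$ raises or lowers $w'$.

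If $\rho(s\ltimes w')=\rho(w')-1$, then (as $s=s_{i_1}$ would have put us in the other case) we have $s\neq s_{i_1}$, and the inductive hypothesis applied to $w'$ and its reduced expression $(s_{i_2},\dots,s_{i_k})$ yields $s\ltimes w'=s_{i_2}\ltimes\cdots\ltimes s_{i_{b-1}}\ltimes s_{i_{b+1}}\ltimes\cdots\ltimes s_{i_k}$ for some $b$. To conclude with $a=b$ I must commute $s$ past $s_{i_1}$, i.e.\ prove $s\ltimes(s_{i_1}\ltimes w')=s_{i_1}\ltimes(s\ltimes w')$. If instead $\rho(s\ltimes w')=\rho(w')+1$, the subcase $s=s_{i_1}$ is settled immediately by Lemma~\ref{square}, since then $s\ltimes w=s_{i_1}\ltimes(s_{i_1}\ltimes w')=w'$, which is exactly the expression with $s_{i_1}$ omitted ($a=1$). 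The remaining subcase, $s\neq s_{i_1}$ with both $s$ and $s_{i_1}$ raising $w'$ while $s\ltimes s_{i_1}\ltimes w'$ drops in rank, is the genuine crux: it is precisely the configuration in which an ordinary braid relation between $s$ and $s_{i_1}$ is forced.

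I expect this last subcase, together with the commutation identity needed above, to be the main obstacle, because $\ltimes$ is neither associative nor a group operation and so these local rank-$2$ facts cannot be read off formally. My plan for overcoming it is to translate each $\ltimes$-step back into ordinary multiplication using the two alternatives of Definition~\ref{twistedinvolutions2}, tracking the length increments via Corollary~\ref{length0} (each step raises $\ell$ by exactly $1$ in the commuting case $sz=zs^\ast$ and by exactly $2$ in the conjugation case). This turns the reduced $I_\ast$-expression into an honest reduced word for $w$ in $W$ of the doubled shape $(s_{i_1}\cdots s_{i_k})(s_{i_{p_m}}^\ast\cdots s_{i_{p_1}}^\ast)$, to which the ordinary Exchange Property of Matsumoto applies: since $\ell(sw)<\ell(w)$, the element $sw$ is obtained by deleting a single letter. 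The final task is the bookkeeping that matches this single ordinary deletion with the deletion of a single ``$s_{i_a}\ltimes$'' from the $I_\ast$-expression and confirms that the result stays inside $I_\ast$; here one uses Lemma~\ref{square} and Corollary~\ref{length0} to identify $s\ltimes w$ with $sw$ or $sws^\ast$ and to recognize the deleted word as the claimed reduced $I_\ast$-expression. This reconciliation between the two ``halves'' of the doubled word and the non-associative $\ltimes$-operation is where the real work lies.
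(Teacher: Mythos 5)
You should know at the outset that the paper itself contains no proof of Proposition~\ref{exchange}: it is imported verbatim from Hultman \cite[Prop.~3.10]{Hu2}, so your argument has to stand entirely on its own. It does not, and the failure is concrete. The commutation identity your first case rests on, $s\ltimes(s_{i_1}\ltimes w')=s_{i_1}\ltimes(s\ltimes w')$, is \emph{false} under exactly the hypotheses available to you. Take $W=W(B_2)$ with generators $s_0,s_1$ and $\ast=\mathrm{id}$, and the reduced $I_\ast$-expression $(s_1,s_0,s_1)$ for the longest element $w=s_1\ltimes s_0\ltimes s_1=s_1s_0s_1s_0=w_0$, so that $w'=s_0\ltimes s_1=s_0s_1s_0$ and $k=3$. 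Let $s=s_0$. Since $w_0$ is central, $s_0\ltimes w_0=s_0w_0=s_1s_0s_1$, so $\rho(s_0\ltimes w_0)=2<3$ and the hypotheses of the proposition hold; moreover $s_0$ lowers $w'$, because $s_0\ltimes w'=s_0(s_0s_1s_0)s_0=s_1$, so we are squarely in your first case with $s\neq s_{i_1}=s_1$. Your identity would force $s_0\ltimes w=s_1\ltimes(s_0\ltimes w')=s_1\ltimes s_1=1$, whereas in fact $s_0\ltimes w=s_1s_0s_1$. The exchange conclusion is of course true here, but with deletion at position $a=3$ (indeed $s_1\ltimes s_0=s_1s_0s_1$), not at the position $b$ supplied by your induction on $w'$ (which is position $2$). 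So the deletion site is \emph{not} inherited from $w'$, and the inductive scheme of your Case A collapses; this is not a peculiarity of $m_{st}=4$, since the same mismatch already occurs in $W(A_3)$ with the reduced $I_\ast$-expression $(s_1,s_2,s_1,s_3)$ for $w_0=(14)(23)$ and $s=s_2$, where $s_2\ltimes w_0=s_1\ltimes s_2\ltimes s_3$ but the induction points at deleting $s_{i_2}$, giving $s_1\ltimes s_1\ltimes s_3=s_3$.

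Your fallback plan is also not a proof as it stands. The unfolding into the doubled reduced word $(s_{i_1}\cdots s_{i_k})(s^\ast_{i_{p_m}}\cdots s^\ast_{i_{p_1}})$ is correct (each raising $\ltimes$-step adds length $1$ or $2$, so the doubled word is reduced), but the ``bookkeeping'' you defer is precisely where all the content lies, and the same $B_2$ example shows it is not routine: the doubled word attached to $(s_1,s_0,s_1)$ is $s_1s_0s_1\cdot s_0$, and ordinary exchange for $s=s_0$ deletes the trailing \emph{mirror} letter $s_0$ (the one created by the conjugation step at position $2$), while the corresponding $I_\ast$-deletion is at position $3$. Thus a deletion in the mirror half corresponds to an $I_\ast$-deletion at a different position, and one must also decide whether $s\ltimes w$ equals $sw$ or $sws^\ast$ and why the resulting word is again of doubled shape; settling this correspondence in general is essentially the whole theorem, not an afterthought. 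For comparison, Hultman's proof in \cite{Hu2} does not run an induction of this shape at all: it relies on the poset-theoretic machinery he develops for $(I_\ast,\leq)$ (gradedness by $\rho$ together with lifting and subword properties of the Bruhat order restricted to twisted involutions), which is exactly the ingredient your outline would need but does not supply.
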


For any $s,t\in S$, recall that $m_{st}$ is the order of $st$. Then $m_{st}$ is the order of $ts$ too. Suppose that $m_{st}<\infty$. We define $$
\Pro(s,t; m_{st}):=\underbrace{stst\cdots}_{\text{$m_{st}$ factors}} .
$$
By definition, we have that $\Pro(s,t; m_{st})=\Pro(t,s; m_{st})$. In this case, we shall call the transformation $\Pro(s,t; m_{st})\longleftrightarrow\Pro(t,s; m_{st})$ the {\it (usual) basic braid transformation}. By a {\it (usual) braid transformation} on a given reduced expression, we mean the compositions of a series of (usual) basic braid transformations.

For any $s,t\in S$ with  $m_{st}<\infty$, we define $$
\TPro(s,t; m_{st}):=\underbrace{s\ltimes t\ltimes s\ltimes t\ltimes \cdots }_{\text{$m_{st}$ factors}}\ltimes .
$$
Note that $\Pro(s,t; m_{st})$ is an element of $W$, while $\TPro(s,t; m_{st})$ is an operator on $I_\ast$ instead of an element of $W$.

Our purpose is to find the right analogues of (basic) braid transformations for twisted involutions in $I_\ast$ and the operation ``$\ltimes$". The following result amounts to saying that any usual (basic) braid transformation naturally induces a (basic) braid $I_\ast$-transformation on reduced $I_{\ast}$-expression.

\begin{prop} \label{braid} \text{(\cite[Lemma 3.16]{RS1}, \cite[Proposition 1.4]{HMP2})} Let $1\neq w\in I_\ast$ and $s,t\in S$. Suppose that $2\leq m_{st}<\infty$, and $(\underbrace{s,t,s,t,\dots}_{\text{$m_{st}$ factors}}, w)$ is a reduced $I_{\ast}$-expression. Then $$\TPro(s,t;m_{st}) w = \TPro(t,s;m_{st}) w. $$
\end{prop}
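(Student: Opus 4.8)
The plan is to track each operator ``$\ltimes$'' as an honest element of $W$ and thereby reduce the identity to a statement inside the rank-two parabolic $W'=\langle s,t\rangle$. Write $r=m_{st}$. Unwinding Definition~\ref{twistedinvolutions2}, a single step sends $v\mapsto sv$ when $sv=vs^{\ast}$ and $v\mapsto svs^{\ast}$ otherwise; by Lemma~\ref{rankfunc} and Corollary~\ref{length0} the hypothesis that $(\underbrace{s,t,s,\dots}_{r},w)$ is reduced forces each of the $r$ steps of $\TPro(s,t;r)w$ to raise $\rho$ by exactly $1$, with $\ell$ going up by $1$ in the first (``multiplication'') regime and by $2$ in the second (``conjugation'') regime. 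Applying the operators from the inside out and collecting factors, every step contributes its generator on the left while a conjugation step contributes one further factor $g^{\ast}$ on the right, so that
$$\TPro(s,t;r)\,w=\Pro(s,t;r)\cdot w\cdot x^{\ast},\qquad \TPro(t,s;r)\,w=\Pro(t,s;r)\cdot w\cdot y^{\ast}$$
for suitable $x,y\in W'$ read off from the conjugation steps. The two left factors are equal, because $\Pro(s,t;r)=\Pro(t,s;r)$ is nothing but the longest element $w_0'$ of the dihedral group $W'$; this symmetry of $w_0'$ under interchanging $s$ and $t$ is the ultimate source of the identity.

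It therefore remains to prove $x=y$, i.e.\ that the multiplication/conjugation pattern contributes the same element of $W'$ whether one starts from $s$ or from $t$. I would deduce this from a uniqueness statement: both $\TPro(s,t;r)w$ and $\TPro(t,s;r)w$ coincide with the unique maximal element of the set of elements of $I_{\ast}$ reachable from $w$ by ascending $\ltimes$-moves in $s$ and $t$ alone. Lemma~\ref{square} makes each such move reversible and Lemma~\ref{rankfunc} turns this set into a finite graded poset; one checks that $s$ is a descent of the top element, since $s\ltimes\TPro(s,t;r)w=\TPro(t,s;r-1)w$ drops $\rho$ by one, and symmetrically that $t$ is a descent of the $t$-first top. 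The Exchange Property (Proposition~\ref{exchange}) together with Corollary~\ref{deletion2} are then the tools that force any two maximal ascending chains of length $r$ in $s,t$ to close up at this common top rather than to diverge, which yields $\TPro(s,t;r)w=\TPro(t,s;r)w$.

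The main obstacle is exactly the coexistence of the two regimes of ``$\ltimes$''. In $W$ the braid identity $\Pro(s,t;r)=\Pro(t,s;r)$ is an unconditional group identity, but here each step is either a plain left multiplication $v\mapsto sv$ or a twisted conjugation $v\mapsto svs^{\ast}$, and a priori these two possibilities could be distributed differently along the $s$-first and the $t$-first words, making the right-hand factors $x^{\ast}$ and $y^{\ast}$ differ. The crux is to show that the reducedness hypothesis forces the regimes to occur symmetrically, equivalently that the correction factor $x^{\ast}$ is intrinsic to the top element and not to the chosen path. This is where the rank-two reduction is indispensable: inside $W'$ the interaction of $w$ with the reflections $s,t,s^{\ast},t^{\ast}$ falls into the finitely many configurations classified by Lusztig \cite{Lu1}, and I would verify the identity configuration by configuration, each case collapsing to the symmetry $\Pro(s,t;r)=\Pro(t,s;r)$ of the dihedral longest element.
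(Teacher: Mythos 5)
Your opening reduction is sound: tracking each $\ltimes$-step as a group element does give $\TPro(s,t;r)\,w=\Pro(s,t;r)\,w\,x^{\ast}$ and $\TPro(t,s;r)\,w=\Pro(t,s;r)\,w\,y^{\ast}$ with $x,y$ read off from the conjugation steps, and since $\Pro(s,t;r)=\Pro(t,s;r)$ is the longest element of $W_K$, the statement is equivalent to $x=y$. Your final paragraph — restrict attention to the rank-two parabolic, invoke Lusztig's classification of the possible configurations, and check the identity configuration by configuration via length considerations — is in substance the route the paper takes: for this proposition the paper offers no proof of its own but cites \cite[Lemma 3.16]{RS1} and \cite[Proposition 1.4]{HMP2}, and its Cases 1--7 following Lemma \ref{7cases} (based on \cite[1.2, 1.4]{Lu1}) carry out exactly this case-by-case length argument for $\ast=\text{id}$, each case ultimately resting on $w_{0,K}=\Pro(s,t;m_{st})=\Pro(t,s;m_{st})$.

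The genuine gap is in your middle paragraph, the proposed uniqueness-of-maximal-element argument, and it is worth naming because it is precisely the crux you yourself identify. The hypothesis is asymmetric: only the $s$-first word is assumed reduced. Your claim that ``symmetrically $t$ is a descent of the $t$-first top'' is circular: it presupposes that the $t$-first chain is ascending, i.e.\ that $(\underbrace{t,s,t,\dots}_{r},w)$ is reduced, which is essentially what the proposition asserts. Concretely, $t\ltimes\TPro(t,s;r)\,w=\TPro(s,t;r-1)\,w$ by Lemma \ref{square}, but $\TPro(s,t;r-1)\,w$ is \emph{not} a suffix of the given reduced word (the suffixes of the $s$-first word are the shorter \emph{$t$-first} words), so neither $\rho(\TPro(t,s;r)\,w)$ nor $\rho(\TPro(s,t;r-1)\,w)$ is controlled by the hypothesis, and no descent conclusion can be drawn. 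Likewise, the existence of a \emph{unique} maximal element of the reachable set is not a consequence of Proposition \ref{exchange} and Corollary \ref{deletion2} alone; it comes from the double-coset structure (Lemma \ref{7cases}, i.e.\ Lusztig's classification), and the Exchange Property by itself gives no confluence statement forcing two ascending chains to ``close up.'' So the poset argument cannot stand on its own; your plan goes through only because the last paragraph falls back on the configuration-by-configuration verification, at which point the factorization and the poset machinery become superfluous — the case analysis already yields $\TPro(s,t;r)\,w=\TPro(t,s;r)\,w$ directly.
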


\textbf{In the rest of this paper, we assume that $\ast=\text{id}$.} In particular, $I_{\ast}=\{w\in W|w^2=1\}$ is the set of involutions in $W$. For any $s\in S$, it holds that $s\ltimes 1=s$.

If $w\in I_\ast$, then a simple reflection $s\in S$ is called a descent of $w$ whenever $sw<w$ (equivalently, $ws<w$).

\begin{lem} \label{7cases} \text{(\cite[Section 2.1]{GP})} Suppose that $w\in I_\ast$. Let $s\ltimes s_{i_1}\ltimes\dots\ltimes s_{i_k}$ and $t\ltimes s_{j_1}\ltimes\dots\ltimes s_{j_k}$ be two reduced $I_\ast$-expressions of $w$ such that $s\neq t$. Let $K:=\{s,t\}$. Let $W_K$ be the subgroup of $W$ generated by $K$ and $\Omega:=W_K wW_K$. Suppose that $|W_K|<\infty$. Let $b\in W$ be the unique minimal length $(W_K,W_K)$-double coset representative in $\Omega$. Then $b\in I_{\ast}$ and $w$ is the unique maximal length $(W_K,W_K)$-double coset representative in $\Omega$. Moreover, $w=w_{0,K}bw_{0,J}w_{0,K}=w_{0,K}w_{0,J}bw_{0,K}$, where $J:=K\cap bKb$, $w_{0,J}$ and $w_{0,K}$ are the unique longest elements in $W_J$ and $W_K$ respectively.
\end{lem}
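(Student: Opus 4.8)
The plan is to deduce the statement from two ingredients: the descent data of $w$ read off from its two reduced $I_\ast$-expressions, and the standard structure theory of parabolic double cosets (uniqueness of minimal and maximal length representatives, the decomposition of a double coset into left cosets, and additivity of length), which is precisely the content of \cite[Section 2.1]{GP}. The involution hypothesis $w=w^{-1}$ is what ties these together.

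First I would extract the descents. Writing $v=s_{i_1}\ltimes\cdots\ltimes s_{i_k}$, Lemma \ref{square} gives $s\ltimes w=v$, so $\rho(s\ltimes w)=\rho(w)-1$, and by Lemma \ref{rankfunc} this means $\ell(sw)=\ell(w)-1$; thus $s$ is a descent of $w$. The second reduced $I_\ast$-expression shows likewise that $t$ is a descent. Since $w=w^{-1}$, the left and right descents of $w$ coincide, so every element of $K=\{s,t\}$ is both a left and a right descent; equivalently $w$ is the maximal length element of its left coset $W_Kw$ and of its right coset $wW_K$. Next, the double coset satisfies $\Omega^{-1}=W_Kw^{-1}W_K=\Omega$, so inversion preserves $\Omega$ and preserves lengths. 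As the minimal length $(W_K,W_K)$-double coset representative is unique, $b^{-1}$ must coincide with $b$, whence $b\in I_\ast$; in particular $J=K\cap bKb=K\cap bKb^{-1}$ and $W_J=W_K\cap bW_Kb^{-1}$.

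The heart of the proof, and the step I expect to be the main obstacle, is to identify $w$ as the maximal length element of $\Omega$. Here I would invoke the decomposition $\Omega=\bigsqcup_v W_Kbv$, where $v$ ranges over the minimal length representatives of $W_J\backslash W_K$ and $\ell(bv)=\ell(b)+\ell(v)$. The maximal length element of each left coset $W_Kbv$ is $w_{0,K}bv$, so the elements of $\Omega$ maximal in their own left $W_K$-coset are exactly the $w_{0,K}bv$, and these all lie in the single right coset $w_{0,K}bW_K$. Since a right coset has a unique maximal length element, at most one element of $\Omega$ is maximal in both its left and its right coset. By the first step $w$ is such an element, so it is the unique one; and because the global maximum of $\Omega$ is automatically maximal in both cosets, it too coincides with $w$. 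Hence $w$ is the unique maximal length element of $\Omega$.

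Finally, the explicit formula falls out of the same decomposition: the global maximum is $w_{0,K}bv_{\max}$, where $v_{\max}$ is the longest minimal length representative of $W_J\backslash W_K$, and a short computation identifies $v_{\max}=w_{0,J}w_{0,K}$. This gives $w=w_{0,K}bw_{0,J}w_{0,K}$ with all lengths adding. For the second expression I would note that $b=b^{-1}$ forces $bJb=J$, so $b$ normalises $W_J$ and fixes its longest element, giving $bw_{0,J}=w_{0,J}b$ and hence $w=w_{0,K}w_{0,J}bw_{0,K}$. The delicate points to pin down are the length-additivity and minimality statements underlying the left-coset decomposition, together with the verification that $w_{0,J}w_{0,K}$ is the longest minimal coset representative; these are exactly the facts I would cite from \cite[Section 2.1]{GP}.
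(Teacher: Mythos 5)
Your proposal is correct, and its first half is exactly the paper's argument: both deduce from Lemma \ref{square} and Lemma \ref{rankfunc} that $s$ and $t$ are descents of $w$ (hence, since $w=w^{-1}$, two-sided descents), and both obtain $b=b^{-1}$ from the invariance of $\Omega$ under inversion together with the uniqueness of the minimal length double coset representative. The difference lies in the second half. The paper disposes of the maximality of $w$ and of the factorization $w=w_{0,K}bw_{0,J}w_{0,K}=w_{0,K}w_{0,J}bw_{0,K}$ in one stroke by citing \cite[Corollary 4.19]{DDPW}, which is precisely the statement that an element having every member of $K$ as a two-sided descent is the unique longest element of its $(W_K,W_K)$-double coset and factors as claimed. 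You instead re-prove that cited result from the Kilmoyer-type decomposition $\Omega=\bigsqcup_v W_K b v$ of \cite[Section 2.1]{GP} (with $v$ running over distinguished representatives of $W_J\backslash W_K$ and lengths adding): your observation that all left-coset maxima $w_{0,K}bv$ lie in the single right coset $w_{0,K}bW_K$, so that at most one element of $\Omega$ is maximal in both its cosets, is a clean way to pin down $w$; the identification $v_{\max}=w_{0,J}w_{0,K}$ then yields the first factorization, and $bJb=J$ (forced by $b=b^{-1}$), hence $bw_{0,J}=w_{0,J}b$, yields the second. What each approach buys: the paper's proof is shorter and defers the structural input to the literature, while yours is self-contained modulo the basic length-additive coset decomposition, and it makes explicit where the hypotheses $|W_K|<\infty$ and $w=w^{-1}$ actually enter the maximality statement. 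Both are valid; yours is essentially an unpacking of the reference the paper relies on.
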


\begin{proof} Since $b\in W$ is the unique minimal length $(W_K,W_K)$-double coset representative in $\Omega$, it follows that
$b^{-1}\in W$ is also the unique minimal length $(W_K,W_K)$-double coset representative in $\Omega$. Hence $b=b^{-1}$ and $b\in I_\ast$. This proves the first statement of the lemma. Applying Lemma \ref{rankfunc}, we see that $sw<w>tw$. Since $w=w^{-1}$, it follows that $ws<w^{-1}=w>wt$. Now the second statement of the lemma follows from \cite[Corollary 4.19]{DDPW}.
\end{proof}

Let $w\in I_\ast$, and suppose $s\neq t$ in $S$ are such that $\ell(sw)=\ell(tw)<\ell(w)$. Define $\Omega:=W_K wW_K$. Suppose that $|W_K|<\infty$. Let $b\in W$ be the unique minimal length element in $\Omega$. Then $b\in I_\ast$ by Lemma \ref{7cases}. Let $m:=m_{s,t}=m_{t,s}$ be the order of $st$. Following \cite{Lu1}, for each $1\leq i\leq m$, we set $$
\s_i:=\underbrace{sts\dots}_{\text{$i$ factors}},\quad \t_i:=\underbrace{tst\dots}_{\text{$i$ factors}}.
$$
Set $J:=K\cap bKb^{-1}$. By \cite[1.2(a), 1.4]{Lu1}, there are only the following seven cases. \medskip

{\it Case 1.} $\{sb, tb\}\cap\{bs, bt\}=\emptyset$, $J=\emptyset$, $\Omega\cap I_{\ast}=\{\xi_{2i},\xi'_{2i}|0\leq i\leq m\}$, where $\xi_0=\xi'_0=b$, $\xi_{2m}=\xi'_{2m}=w$, $\xi_{2i}=\s_i^{-1}b\s_i$,  $\xi'_{2i}=\t_i^{-1}b\t_i$, $\ell(\xi_{2i})=\ell(\xi'_{2i})=\ell(b)+2i$;
In this case, $$\begin{aligned}w=w_{0,K}bw_{0,K}&=\underbrace{sts\dots}_{\text{$m$ factors}}b\underbrace{sts\dots}_{\text{$m$ factors}}=
\underbrace{tst\dots}_{\text{$m$ factors}}b\underbrace{tst\dots}_{\text{$m$ factors}} \\
&=\underbrace{sts\dots}_{\text{$m$ factors}}b\underbrace{tst\dots}_{\text{$m$ factors}}=
\underbrace{tst\dots}_{\text{$m$ factors}}b\underbrace{sts\dots}_{\text{$m$ factors}}. \end{aligned} $$
By length consideration we can deduce that $$
w=\underbrace{s\ltimes t\ltimes s\ltimes\dots}_{\text{$m$ factors}}\ltimes b=\underbrace{t\ltimes s\ltimes t\ltimes\dots}_{\text{$m$ factors}}\ltimes b .
$$

{\it Case 2.} $sb=bs, tb\neq bt$, $J=\{s\}$, $\Omega\cap I_{\ast}=\{\xi_{2i},\xi_{2i+1}|0\leq i\leq m-1\}$, where $\xi_0=b, \xi_{2m-1}=w$, $\xi_{2i}=\t_i^{-1}b\t_i$,  $\xi_{2i+1}=\t_i^{-1}b\s_{i+1}=\s_{i+1}^{-1}b\t_i$, $\ell(\xi_{2i})=\ell(b)+2i$, $\ell(\xi_{2i+1})=\ell(b)+2i+1$;
In this case, $$\begin{aligned}w=w_{0,K}w_{0,J}bw_{0,K}&=\underbrace{sts\dots}_{\text{$m$ factors}}(sb)\underbrace{sts\dots}_{\text{$m$ factors}}=
\underbrace{sts\dots}_{\text{$m$ factors}}(bs)\underbrace{sts\dots}_{\text{$m$ factors}} \\
&=\underbrace{sts\dots}_{\text{$m$ factors}}b\underbrace{tst\dots}_{\text{$m-1$ factors}}=
\underbrace{tst\dots}_{\text{$m$ factors}}b\underbrace{tst\dots}_{\text{$m-1$ factors}}. \end{aligned} $$
By length consideration we can deduce that $$
w=\underbrace{s\ltimes t\ltimes s\ltimes\dots}_{\text{$m$ factors}}\ltimes b=\underbrace{t\ltimes s\ltimes t\ltimes\dots}_{\text{$m$ factors}}\ltimes b .
$$

\medskip
{\it Case 3.} $sb\neq bs, tb=bt$, $J=\{t\}$, $\Omega\cap I_{\ast}=\{\xi_{2i},\xi_{2i+1}|0\leq i\leq m-1\}$, where $\xi_0=b, \xi_{2m-1}=w$, $\xi_{2i}=\s_i^{-1}b\s_i$,  $\xi_{2i+1}=\s_i^{-1}b\t_{i+1}=\t_{i+1}^{-1}b\s_i$, $\ell(\xi_{2i})=\ell(b)+2i$, $\ell(\xi_{2i+1})=\ell(b)+2i+1$;
In this case, $$\begin{aligned}w=w_{0,K}w_{0,J}bw_{0,K}&=\underbrace{sts\dots}_{\text{$m$ factors}}(tb)\underbrace{sts\dots}_{\text{$m$ factors}}=
\underbrace{sts\dots}_{\text{$m$ factors}}(bt)\underbrace{tst\dots}_{\text{$m$ factors}} \\
&=\underbrace{sts\dots}_{\text{$m$ factors}}b\underbrace{sts\dots}_{\text{$m-1$ factors}}=
\underbrace{tst\dots}_{\text{$m$ factors}}b\underbrace{sts\dots}_{\text{$m-1$ factors}}. \end{aligned} $$
By length consideration we can deduce that $$
w=\underbrace{s\ltimes t\ltimes s\ltimes\dots}_{\text{$m$ factors}}\ltimes b=\underbrace{t\ltimes s\ltimes t\ltimes\dots}_{\text{$m$ factors}}\ltimes b .
$$

\medskip
{\it Case 4.} $sb=bs, tb=bt$, $J=K$, $m$ is odd, $\Omega\cap I_{\ast}=\{\xi_0=\xi'_0=b, \xi_m=\xi'_m=w, \xi_{2i+1},\xi'_{2i+1}|0\leq i\leq (m-1)/2\}$, where
$\xi_1=sb$, $\xi_3=tstb$, $\xi_5=ststsb$, $\dots$; $\xi'_1=tb$, $\xi'_3=stsb$, $\xi'_5=tststb$, $\dots$; $\ell(\xi_{2i+1})=\ell(\xi'_{2i+1})=\ell(b)+2i+1$;
In this case, $$w=w_{0,K}w_{0,J}bw_{0,K}=bw_{0,K}=\underbrace{sts\dots s}_{\text{$m$ factors}}(b)=
\underbrace{tst\dots t}_{\text{$m$ factors}}(b).  $$
By length consideration we can deduce that $$
w=\underbrace{s\ltimes t\ltimes s\ltimes\dots}_{\text{$(m+1)/2$ factors}}\ltimes b=\underbrace{t\ltimes s\ltimes t\ltimes\dots}_{\text{$(m+1)/2$ factors}}\ltimes b .
$$

\medskip
{\it Case 5.} $sb=bs, tb=bt$, $J=K$, $m$ is even, $\Omega\cap I_{\ast}=\{\xi_0=\xi'_0=b, \xi_m=\xi'_m=w, \xi_{2i+1},\xi'_{2i+1}|0\leq i\leq (m-2)/2\}$, where
$\xi_1=sb$, $\xi_3=tstb$, $\xi_5=ststsb$, $\dots$; $\xi'_1=tb$, $\xi'_3=stsb$, $\xi'_5=tststb$, $\dots$; $\ell(\xi_{2i+1})=\ell(\xi'_{2i+1})=\ell(b)+2i+1$;
$\xi_m=\xi'_m=b\s_m=b\t_m=\s_mb=\t_mb$; $\ell(\xi_{m})=\ell(\xi'_{m})=\ell(b)+m$.
In this case, using a similar argument as in Case 4, we can get that $$
w=\underbrace{s\ltimes t\ltimes s\ltimes\dots}_{\text{$m/2+1$ factors}}\ltimes b=\underbrace{t\ltimes s\ltimes t\ltimes\dots}_{\text{$m/2+1$ factors}}\ltimes b .
$$

\medskip
{\it Case 6.} $sb=bt, tb=bs$, $J=K$, $m$ is odd, $\Omega\cap I_{\ast}=\{\xi_0=\xi'_0=b, \xi_m=\xi'_m=w, \xi_{2i},\xi'_{2i}|0\leq i\leq (m-1)/2\}$, where
$\xi_2=stb$, $\xi_4=tstsb$, $\xi_6=stststb$, $\dots$; $\xi'_2=tsb$, $\xi'_4=ststb$, $\xi'_6=tststsb$, $\dots$; $\ell(\xi_{2i})=\ell(\xi'_{2i})=\ell(b)+2i$;
$\xi_m=\xi'_m=b\s_m=b\t_m=\s_mb=\t_mb$; $\ell(\xi_{m})=\ell(\xi'_{m})=\ell(b)+m$.
In this case, using a similar argument as in Case 4, we can get that $$
w=\underbrace{s\ltimes t\ltimes s\ltimes\dots}_{\text{$(m+1)/2$ factors}}\ltimes b=\underbrace{t\ltimes s\ltimes t\ltimes\dots}_{\text{$(m+1)/2$ factors}}\ltimes b .
$$

\medskip
{\it Case 7.} $sb=bt, tb=bs$, $J=K$, $m$ is even, $\Omega\cap I_{\ast}=\{\xi_0=\xi'_0=b, \xi_m=\xi'_m=w, \xi_{2i},\xi'_{2i}|0\leq i\leq m/2\}$, where
$\xi_2=stb$, $\xi_4=tstsb$, $\xi_6=stststb$, $\dots$; $\xi'_2=tsb$, $\xi'_4=ststb$, $\xi'_6=tststsb$, $\dots$; $\ell(\xi_{2i})=\ell(\xi'_{2i})=\ell(b)+2i$.
In this case, using a similar argument as in Case 4, we can get that $$
w=\underbrace{s\ltimes t\ltimes s\ltimes\dots}_{\text{$m/2$ factors}}\ltimes b=\underbrace{t\ltimes s\ltimes t\ltimes\dots}_{\text{$m/2$ factors}}\ltimes b .
$$

\begin{lem} \label{SGCM1} Let $W$ be the Weyl group of a Kac--Moody algebra $\mathfrak{g}$ corresponding to a symmetrizable generalized Cartan matrix. Let $\<-,-\>$ be the invariant bilinear form on $\mathfrak{h}^{\ast}$, where $\mathfrak{h}$ is the maximal toral subalgebra of $\mathfrak{g}$. Let $\alpha,\beta$ be two simple roots such that $\<\alpha,\beta\>\neq 0$. Suppose that $w\in W$, $w(\alpha)\in\{\pm\alpha\}$, $w(\beta)\in\{\pm\beta\}$. Then $w(\alpha)=\alpha$ if and only if $w(\beta)=\beta$; and $w(\alpha)=-\alpha$ if and only if $w(\beta)=-\beta$.
\end{lem}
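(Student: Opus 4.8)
The plan is to exploit the $W$-invariance of the bilinear form $\<-,-\>$, which reduces the whole statement to a one-line sign computation. First I would recall the standard structural fact for a symmetrizable generalized Cartan matrix: the induced form on $\mathfrak{h}^{\ast}$ is symmetric and invariant under the natural action of $W$, so that each $w\in W$ acts as an isometry, i.e. $\<w(\mu),w(\nu)\>=\<\mu,\nu\>$ for all $\mu,\nu\in\mathfrak{h}^{\ast}$. This invariance is the only nonformal input, and it holds because the simple reflections act on $\mathfrak{h}^{\ast}$ orthogonally with respect to the normalized invariant form; once it is in hand the argument is purely combinatorial.

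Next, the hypothesis says precisely that $w(\alpha)=\eps_1\alpha$ and $w(\beta)=\eps_2\beta$ for some signs $\eps_1,\eps_2\in\{\pm1\}$. Applying invariance to the pair $(\alpha,\beta)$ gives $\<\alpha,\beta\>=\<w(\alpha),w(\beta)\>=\eps_1\eps_2\<\alpha,\beta\>$, hence $(\eps_1\eps_2-1)\<\alpha,\beta\>=0$. Since by assumption $\<\alpha,\beta\>\neq 0$, I may cancel this factor and conclude $\eps_1\eps_2=1$, that is, $\eps_1=\eps_2$. Unwinding this, either $w(\alpha)=\alpha$ and $w(\beta)=\beta$, or $w(\alpha)=-\alpha$ and $w(\beta)=-\beta$, which is exactly the content of both equivalences asserted in the lemma.

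I do not anticipate any serious obstacle: the entire point is that the mixed possibilities $w(\alpha)=\pm\alpha$, $w(\beta)=\mp\beta$ are eliminated by combining the nonvanishing of $\<\alpha,\beta\>$ with the isometry property of $w$. The one step that genuinely deserves a sentence of justification is the $W$-invariance of the form for symmetrizable Kac--Moody algebras, which I would state with a reference and then immediately run the cancellation above; everything else is formal.
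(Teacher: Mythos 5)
Your argument is correct and is exactly the paper's proof: the paper's entire justification is the invariance equality $\<w(\alpha),w(\beta)\>=\<\alpha,\beta\>$, and your sign computation $\<\alpha,\beta\>=\eps_1\eps_2\<\alpha,\beta\>$ with cancellation of the nonzero factor $\<\alpha,\beta\>$ is precisely the "easy" deduction the paper leaves implicit.
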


\begin{proof} This follows easily from the equality $\<w(\alpha),w(\beta)\>=\<\alpha,\beta\>$.
\end{proof}

\bigskip

\section{Weyl groups of type $D_n$}

In this section we study the braid $I_\ast$-transformations between reduced $I_\ast$-expressions of involutions in the Weyl group $W(D_n)$ of type $D_n$. We shall
identify in Definition \ref{braid1} a finite set of basic braid $I_\ast$-transformations which span and preserve the sets of reduced $I_\ast$-expressions for
any involution in $W(D_n)$ for all $n$ simultaneously, and show in Theorem \ref{mainthm0} that any two reduced $I_\ast$-expressions for an involution in $W(D_n)$
 can be transformed into each other through a series of basic braid $I_\ast$-transformations.
\smallskip

Let $W(D_n)$ be the Weyl group of type $D_n$. It is generated by the simple reflections $\{s_u,s_1,\cdots,s_{n-1}\}$ which satisfy the following relations:            $$
\begin{aligned}
&s_u^2=1=s_i^2,\,\,\,\, for\,\,1\leq i\leq n-1,\\
&s_u s_2 s_u=s_2 s_u s_2,\\
&s_u s_1= s_1 s_u,\\
&s_i s_{i+1} s_i= s_{i+1} s_i s_{i+1},\,\,\,\, for\,\,1\leq i\leq n-2,\\
&s_u s_i= s_i s_u,\,\,\,\,for\,\,3\leq i\leq n-1,\\
&s_is_j=s_js_i,\,\,\,\,for\,\,1\leq i<j-1\leq n-2.
\end{aligned}
$$

Alternatively, $W(D_n)$ can be (cf. \cite{BB, HuJ2}) realized as the subgroup of the permutations on the set $
\{1,-1,2,-2,\cdots,n,-n\}$ such that: \begin{equation}\label{sign0}
\text{$\sigma(i)=j$ if and only if $\sigma(-i)=-j$ for any $i,j$, and $\#\{1\leq i\leq n|\sigma(i)<0\}$ is even.}
\end{equation}
In particular, under this identification, we have that $$
s_u=(1,-2)(-1,2)\,\,\,\text{and}\,\,\, s_i=(i,i+1)(-i,-i-1),\,\,\,for\,\,1\leq i<n .
$$
The subgroup generated by $s_1,s_2,\cdots,s_{n-1}$ (or $s_u,s_2,\cdots,s_{n-1}$) can be identified
with the symmetric group $\Sym_n$. Let $\tau$ be the automorphism of $W(D_n)$ which fixes each generator $s_i$ for
$2\leq i<n$ and exchanges the generators $s_1$ and $s_u$.

Let $\eps_1,\cdots,\eps_n$ be the standard basis of $\mathbb{R}^n$. We set $u:=\eps_1+\eps_2$ and $\alpha_i:=\eps_{i+1}-\eps_{i}$ for each $1\leq i<n$. For each $1\leq i\leq n$, we define $\eps_{-i}:=-\eps_i$. Then $W(D_n)$ acts on the set
$\{\eps_i|i=-n,\cdots,-2,-1,1,2,\cdots,n\}$ via $\sigma(\eps_i):=\eps_{\sigma(i)}$. Let $$
\Phi:=\{\pm\eps_i\pm\eps_j|1\leq i<j\leq n\}\,\,\text{and}\,\,E:=\text{$\mathbb{R}$-Span$\{v|v\in\Phi\}$}.
$$
Then $\Phi$ is the root system of type $D_n$ in $E$ with $W(D_n)$ being its Weyl group. We choose $\Delta:=\{u,\alpha_{i}|1\leq i<n\}$ to be the set of the simple
roots. Then $
\Phi^{+}=\{\eps_j\pm\eps_i|1\leq i<j\leq n\}$ is the set of positive roots. For any $0\neq \alpha\in E$, we write $\alpha>0$ if
$\alpha=\sum_{\beta\in\Delta}k_{\beta}\beta$ with $k_{\beta}\geq 0$ for each $\beta$.

For any $w\in W(D_n)$ and $\alpha\in\Delta$, it is well-known that \begin{equation}
\label{reflection0}
ws_{\alpha}w^{-1}=s_{w(\alpha)},
\end{equation}
where $s_{w(\alpha)}$ is the reflection with respect to hyperplane which is orthogonal to $w(\alpha)$.

\begin{lem} \label{length1} Let $w\in W(D_n)$ and $1\leq i<n$. Then

1) $ws_i<w$ if and only if $w(\eps_{i+1}-\eps_{i})<0$;

2) $ws_u<w$ if and only if $w(\eps_1+\eps_2)<0$.

\end{lem}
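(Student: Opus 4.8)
The plan is to recognize both statements as instances of the standard criterion relating a right descent of $w$ to the sign of the image under $w$ of the corresponding simple root. Indeed, $s_i=s_{\alpha_i}$ is the simple reflection attached to the simple root $\alpha_i=\eps_{i+1}-\eps_i$, and $s_u$ is attached to the simple root $u=\eps_1+\eps_2$; both $\alpha_i$ and $u$ lie in $\Delta$. So it suffices to prove the general fact that for $w\in W(D_n)$ and any simple root $\alpha\in\Delta$ one has $ws_\alpha<w$ if and only if $w(\alpha)<0$. Since $s_\alpha$ is a Coxeter generator, $ws_\alpha<w$ in the Bruhat order is equivalent to $\ell(ws_\alpha)=\ell(w)-1$, so the problem reduces to a length computation.

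The key input is the description of length via inversions, $\ell(w)=|N(w)|$ where $N(w):=\{\gamma\in\Phi^+\mid w(\gamma)<0\}$, together with the elementary fact that for a simple root $\alpha$ the reflection $s_\alpha$ sends $\alpha$ to $-\alpha$ and permutes $\Phi^+\setminus\{\alpha\}$. I would then compare $N(ws_\alpha)$ with $N(w)$. Testing $\gamma=\alpha$ gives $ws_\alpha(\alpha)=-w(\alpha)$, so $\alpha\in N(ws_\alpha)$ precisely when $w(\alpha)>0$; while for $\gamma\in\Phi^+\setminus\{\alpha\}$ the substitution $\delta=s_\alpha(\gamma)$ sets up a bijection of $\Phi^+\setminus\{\alpha\}$ with itself under which $\gamma\in N(ws_\alpha)$ corresponds to $\delta\in N(w)$. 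Counting the two contributions shows $\ell(ws_\alpha)=\ell(w)+1$ when $w(\alpha)>0$ and $\ell(ws_\alpha)=\ell(w)-1$ when $w(\alpha)<0$, which gives the claim.

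Applying this with $\alpha=\alpha_i$ gives part 1) and with $\alpha=u$ gives part 2). I do not expect any serious obstacle here; the only points requiring care are bookkeeping, namely fixing the convention $\Phi^+=\{\eps_j\pm\eps_i\mid 1\leq i<j\leq n\}$ consistently so that ``$<0$'' has the intended meaning, and confirming that $\alpha_i$ and $u$ are genuinely simple, which is built into the choice of $\Delta$ made just above the statement. Alternatively, one could bypass the abstract root-system argument entirely and verify both equivalences by a direct computation in the signed-permutation realization \eqref{sign0}, reading off $w(\eps_{i+1}-\eps_i)$ and $w(\eps_1+\eps_2)$ from the one-line notation of $w$; this is more hands-on but equally routine.
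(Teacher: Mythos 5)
Your proposal is correct. The paper in fact gives no proof of Lemma \ref{length1} at all — it is stated as a standard fact (it is the usual right-descent criterion $ws_\alpha<w \Leftrightarrow w(\alpha)<0$ for a simple root $\alpha$, here applied to $\alpha_i=\eps_{i+1}-\eps_i$ and $u=\eps_1+\eps_2$, both of which lie in the paper's chosen $\Delta$) — and your inversion-counting argument is precisely the standard justification the paper implicitly relies on, so there is nothing to reconcile.
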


\begin{lem} \label{64a} Let $W=W(D_n)$ be the Weyl group of type $D_n$. Let $w\in I_\ast$ be an involution, and let $s=s_{\alpha}$ and $t=s_{\beta}$ for some
$\alpha\neq\beta$ in $\Delta$ with $$(\alpha,\beta)\in\{(\alpha_i,\alpha_{i+1}),(\alpha_{i+1},\alpha_{i}),(\alpha_2,u),(u,\alpha_{2})|1\leq i<n-1\}.
$$
Assume that $s,t$ are both descents of $w$ and let $b\in I_\ast$ be the unique minimal length representative of $W_K wW_K$ where $K:=\<s,t\>$. Assume further that
$b$ has no descents which commute with both $s$ and $t$. It then holds that $bs=sb$ and $bt=tb$ (Case 4 in the notation of Lemma \ref{7cases})
only if one of the following occurs: \begin{enumerate}
\item $b=1$;
\item $(\alpha,\beta)\in\{(\alpha_i,\alpha_{i+1}),(\alpha_{i+1},\alpha_i)\}$ and $b=s_{i+2}\ltimes s_{i+1}\ltimes s_{i}\ltimes d$, where $1\leq i<n-2$, $d\in I_\ast$ and $\rho(b)=\rho(d)+3$;
\item $(\alpha,\beta)\in\{(u,\alpha_2),(\alpha_2,u)\}$ and $b=s_{3}\ltimes s_2\ltimes s_{1}\ltimes s_u\ltimes d$, where $d\in I_\ast$ and $\rho(b)=\rho(d)+4$.
\end{enumerate}
\end{lem}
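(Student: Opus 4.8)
The plan is to first translate the commutation conditions of Case~4 into a statement about the action of $b$ on simple roots, and then into the signed-permutation model of $W(D_n)$. Since $b\in I_\ast$ is an involution, the reflection formula (\ref{reflection0}) gives $bs_\alpha b^{-1}=bs_\alpha b=s_{b(\alpha)}$, so $bs=sb$ is equivalent to $b(\alpha)\in\{\pm\alpha\}$, and likewise $bt=tb$ is equivalent to $b(\beta)\in\{\pm\beta\}$. As $b$ is the minimal length element of $W_KwW_K$, it is right $K$-reduced, so $\ell(bs)>\ell(b)$ and $\ell(bt)>\ell(b)$; by Lemma \ref{length1} this forces $b(\alpha)>0$ and $b(\beta)>0$. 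Since $\langle\alpha,\beta\rangle\neq0$ for the adjacent pairs in question, Lemma \ref{SGCM1} shows the two signs agree, and together with positivity this yields $b(\alpha)=\alpha$ and $b(\beta)=\beta$. Reading these equalities through $\sigma(\eps_i)=\eps_{\sigma(i)}$ and comparing coefficients of the standard basis vectors, I would check that the unique common solution is that $b$ fixes the three consecutive indices involved: for $(\alpha,\beta)=(\alpha_i,\alpha_{i+1})$ this gives $b(i)=i$, $b(i+1)=i+1$, $b(i+2)=i+2$, and for the fork pair $(\alpha,\beta)=(u,\alpha_2)$ it gives $b(1)=1$, $b(2)=2$, $b(3)=3$. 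The transposed pairs $(\alpha_{i+1},\alpha_i)$ and $(\alpha_2,u)$ produce the same $b$ and the same analysis.

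Next I would use the hypothesis that no descent of $b$ commutes with both $s$ and $t$ to confine the right descent set of $b$ to the few reflections adjacent to, but not commuting with, the pair $\{s,t\}$: for a regular pair these are $s_{i-1}$ and $s_{i+2}$ (with $s_u$ an extra candidate when $i=2$), while for the fork pair they are $s_1$ and $s_3$. In the fork case $b(\alpha_1)=\eps_2-\eps_1>0$ already rules out $s_1$, leaving only $s_3$. Assuming $b\neq1$ (otherwise conclusion~(1) holds), $b$ has at least one right descent, and I claim the \emph{outer} reflection $s_{i+2}$ (respectively $s_3$) must be one of them. This is the first real step: if $s_{i+2}$ were not a descent, then $b(i+3)\geq i+3$, while a descent at $s_{i-1}$ would force $b(i-1)\geq i+3$, and the involution property would then place a value $\le i-1$ at a position above $i+3$, contradicting the monotonicity of the window $(b(1),\dots,b(n))$ on $\{i+3,\dots,n\}$ imposed by the absence of descents there. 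The remaining subcases near the fork (where $s_u$ is a candidate) are excluded by the same window-monotonicity argument. Consequently $b(i+3)<i+1$, i.e. either $b(i+3)\le i-1$ or $b(i+3)<0$.

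The final step is to peel reflections off the right end using the operation $\ltimes$ and Lemma \ref{square}. Set $b_1:=s_{i+2}\ltimes b$; since $s_{i+2}$ is a descent, $\rho(b_1)=\rho(b)-1$ by Lemma \ref{rankfunc}, and conjugation shows $b_1$ fixes $i+1$ while $b_1(i+2)=b(i+3)<i+1$, so $s_{i+1}$ is a descent of $b_1$; repeating, $s_i$ is a descent of $b_2:=s_{i+1}\ltimes b_1$. Putting $d:=s_i\ltimes s_{i+1}\ltimes s_{i+2}\ltimes b\in I_\ast$, Lemma \ref{square} gives $b=s_{i+2}\ltimes s_{i+1}\ltimes s_i\ltimes d$ with $\rho(b)=\rho(d)+3$, which is conclusion~(2). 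In the fork case the decisive difference is that $b(4)<0$ is \emph{forced}, since $b$ fixes $1,2,3$ and $s_3$ is the only available descent; after peeling $s_3,s_2,s_1$ one finds $b_3(1)=b(4)<0$ and $b_3(2)=2$, so $b_3(u)=\eps_2-\eps_{|b(4)|}<0$, making $s_u$ a descent. One further peel then yields $b=s_3\ltimes s_2\ltimes s_1\ltimes s_u\ltimes d$ with $\rho(b)=\rho(d)+4$, which is conclusion~(3) and explains the extra reflection occurring only in the fork case.

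I expect the main obstacle to be the bookkeeping of the signed-permutation window near the fork node, where $s_u$ and $s_1$ both attach to $s_2$: this is precisely where the forced sign change on position $1$ produces the additional $s_u$-descent, and hence the extra transformation that has no analogue in type $A$. A secondary point requiring care is the boundary behaviour—when the fixed block $i,i+1,i+2$ abuts the end $n$ (so $i=n-2$) or the fork, there is no room for the outer descent and Case~4 collapses to $b=1$, which is exactly why conclusion~(2) is stated only for $1\le i<n-2$.
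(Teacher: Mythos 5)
Your treatment of the regular pairs $(\alpha_i,\alpha_{i+1})$ follows essentially the same route as the paper: positivity of $b(\alpha),b(\beta)$ forces $b$ to fix the three relevant indices, the no-descent hypothesis gives the monotone windows, and one peels $s_{i+2},s_{i+1},s_i$ off by conjugation. (Your variants — deriving positivity from $b$ being the minimal double-coset representative instead of from reduced $I_\ast$-sequences, and proving directly that $s_{i+2}$ must be a descent instead of arguing by contradiction — are fine. One small imprecision: if $b(i+3)=-(i+3)$ then $b_1(i+2)=-(i+2)\neq b(i+3)$, but since $b_1(i+2)<i+1$ still holds, the peeling survives.)

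The fork case $(u,\alpha_2)$, however, has a genuine gap. From ``$s_3$ is the only available descent'' you only get $b(4)\le -4$, but your peeling formula $b_3(1)=b(4)$ (with $b_3=s_1\ltimes s_2\ltimes s_3\ltimes b$) silently assumes that $s_3,s_2,s_1$ all fix the value $b(4)$, i.e.\ that $|b(4)|\ge 5$. If $b(4)=-4$ the computation comes out differently: $s_3\ltimes b$ sends $3\mapsto -3$, and after the three peels one finds $b_3=(1,-1)$, so $b_3(1)=-1$, $b_3(2)=2$, $b_3(u)=\eps_2-\eps_1>0$, and $s_u$ is \emph{not} a descent of $b_3$; conclusion (3) would fail for such a $b$. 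Ruling out $b(4)=-4$ cannot be done by your descent/window bookkeeping alone: the signed involution $(4,-4)$ (fixing all other indices) satisfies every constraint you impose — it fixes $1,2,3$, its unique descent is $s_3$, and $b(4)<b(5)<\dots<b(n)$ — and is excluded only because it has an odd number of sign changes, i.e.\ by the evenness condition (\ref{sign0}) defining $W(D_n)$. This is exactly the step the paper supplies: by (\ref{sign0}), $b(4)<0$ forces a second negative entry among positions $\ge 5$, the window then gives $b(5)\le -4$, hence $b(4)\le -5$, and only then does the four-fold peel produce $b=s_3\ltimes s_2\ltimes s_1\ltimes s_u\ltimes d$. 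Your proposal never invokes (\ref{sign0}); since this parity step is precisely what separates the type $D_n$ conclusion from the type $B_n$ behaviour, where short exceptional elements such as $s_0\ltimes s_1\ltimes s_0\ltimes s_2$ genuinely occur (cf.\ Lemma \ref{67aB}), it is not a formality but the crux of this case.
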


\begin{proof} By assumption, we have that $m_{st}=3$, $bsb=s$ and $btb=t$. It follows that $b(\alpha)=\pm\alpha, b(\beta)=\pm\beta$ (by (\ref{reflection0})).
By the expression of $w$ given in Case 4, both $(t, s, b)$ and $(s, t, b)$ are reduced $I_\ast$-sequences. Applying Corollary \ref{length0} and Corollary
\ref{deletion2}, we can deduce that $b(\alpha)>0<b(\beta)$. It follows that $b(\alpha)=\alpha, b(\beta)=\beta$. Without loss of generality, we can assume
that $(\alpha,\beta)\in\{(\alpha_i,\alpha_{i+1}),(u,\alpha_2)|1\leq i<n-1\}$.

Suppose that a) does not happen. Then there are only the following three possibilities:

\smallskip
{\it Case 1.} $(\alpha,\beta)=(\alpha_i,\alpha_{i+1})$ for some $1\leq i<n-1$. Then $b(i)=i, b(i+1)=i+1, b(i+2)=i+2$. By Lemma \ref{length1} and the assumption that a) does not happen we can deduce that \begin{equation}\label{b01}
b(1)<b(2)<\dots<b(i-1)\,\,\,\text{and}\,\,\,b(i+3)<b(i+4)<\dots<b(n)
\end{equation}
and $b\neq 1$.

Suppose that b) does not happen. We claim that $b(i+3)\geq i+3$. In fact, if $b(i+3)<i+3$ then we can deduce that
$b(i+3)< i$. In this case, $$
b^{-1}(\alpha_{i+2})=b(\alpha_{i+2})=b(\eps_{i+3})-b(\eps_{i+2})=b(\eps_{i+3})-\eps_{i+2}<0 ,
$$
and $b(\alpha_{i+2})\neq\pm\alpha_{i+2}$, which implies that $bs_{i+2}\neq s_{i+2}b$. By Corollary \ref{length0}, we get that $s_{i+2}\ltimes b=s_{i+2}bs_{i+2}$ and $\rho(b)=\rho(s_{i+2}\ltimes b)+1$.

Now, $$
(s_{i+2}bs_{i+2})^{-1}(\alpha_{i+1})=(s_{i+2}bs_{i+2})(\eps_{i+2}-\eps_{i+1})=s_{i+2}b(\eps_{i+3})-\eps_{i+1}<0 ,
$$
and $(s_{i+2}bs_{i+2})(\alpha_{i+1})\neq\pm\alpha_{i+1}$ implies that $(s_{i+2}bs_{i+2})s_{i+1}\neq s_{i+1}(s_{i+2}bs_{i+2})$. It follows from Corollary \ref{length0} that $$
s_{i+1}\ltimes (s_{i+2}\ltimes b)=s_{i+1}s_{i+2}bs_{i+2}s_{i+1}\,\,\,\text{and}\,\,\,\rho(b)=\rho(s_{i+1}\ltimes s_{i+2}\ltimes b)+2.
$$

Finally, $$
(s_{i+1}s_{i+2}bs_{i+2}s_{i+1})^{-1}(\alpha_{i})=(s_{i+1}s_{i+2}bs_{i+2}s_{i+1})(\eps_{i+1}-\eps_{i})=s_{i+1}s_{i+2}b(\eps_{i+3})-\eps_{i}<0 .
$$
It follows from Corollary \ref{length0} that $$
\rho(b)=\rho(s_{i}\ltimes s_{i+1}\ltimes s_{i+2}\ltimes b)+3.
$$
Set $d:=s_{i}\ltimes s_{i+1}\ltimes s_{i+2}\ltimes b$. Then $b=s_{i+2}\ltimes s_{i+1}\ltimes s_{i}\ltimes d$ and this is b) which contradicts our assumption.
This completes the proof of our claim.

According to (\ref{b01}) and the fact that $b(j)=j$ for any $i\leq j\leq i+2$, our above claim implies that $b(k)=k$ for any $i\leq k \leq n$.

If $ i=1$, then $ b=1$, a contradiction to our assumption that a) does not hold.

If $ i=2$, then by (\ref{sign0}) $ b(1)=1$ and hence $ b=1$, a contradiction to our assumption that a) does not hold.

It remains to consider the case when $i\geq 3$. In this case, since $s_u$ commutes with any $s_j$ when $j\geq 3$, our assumption implies that $s_ub>b$. Thus
we must have that $b(u)=b(\eps_1+\eps_2)>0$. Combining the claim with (\ref{sign0}) and (\ref{b01}), one can deduce that $b=1$, a contradiction to our assumption that a) does not hold. This completes the proof in Case 1.

\smallskip
{\it Case 2.} $(\alpha,\beta)=(u,\alpha_{2})$. Then $b(1)=1, b(2)=2, b(3)=3$. By Lemma \ref{length1} and the assumption that a) does not happen we can get that \begin{equation}\label{b02}
b(4)<b(5)<\dots<b(n) .
\end{equation}

By assumption, $b\neq 1$. It follows that $b(4)<4$ and hence by (\ref{sign0}) $b(4)\leq -4$ (because $b(j)=j$ for any $1\leq j\leq 3$). Furthermore, by (\ref{sign0}), we deduce that $b(5)<0$ and it follows that $b(5)\leq -4$ and hence $b(4)\leq -5$. In this case, $b^{-1}(\alpha_{3})=b(\alpha_{3})=b(\eps_4)-b(\eps_{3})=b(\eps_{4})-\eps_{3}<0$,
and $b(\alpha_{3})\neq\pm\alpha_{3}$, which implies that $bs_{3}\neq s_{3}b$. By Corollary \ref{length0}, we get that $s_{3}\ltimes b=s_{3}bs_{3}$ and $\rho(b)=\rho(s_{3}\ltimes b)+1$. In a similar way, we can get that
$$\begin{aligned}
& s_{2}\ltimes (s_{3}\ltimes b)=s_2s_{3}bs_{3}s_2\,\,\,\text{and}\,\,\, \rho(b)=\rho(s_2\ltimes s_{3}\ltimes b)+2,\\
& s_{1}\ltimes s_{2}\ltimes s_{3}\ltimes b=s_{1}s_2s_{3}bs_{3}s_2s_{1}\,\,\,\text{and}\,\,\,\rho(b)=\rho(s_{1}\ltimes s_2\ltimes s_{3}\ltimes b)+3.
\end{aligned}$$
Using the inequality $b(4)\leq -5$ we can also get that $(s_{1}\ltimes s_{2}\ltimes s_{3}\ltimes b)(\eps_1+\eps_2)<0$, and $$
s_u\ltimes s_{1}\ltimes s_{2}\ltimes s_{3}\ltimes b=s_us_{1}s_2s_{3}bs_{3}s_2s_{1}s_u\,\,\,\text{and}\,\,\, \rho(b)=\rho(s_u\ltimes s_{1}\ltimes s_2\ltimes s_{3}\ltimes b)+4.
$$
We set $d:=s_{u}\ltimes s_1\ltimes s_{2}\ltimes s_3\ltimes b$. Then $b=s_{3}\ltimes s_2\ltimes s_{1}\ltimes s_u\ltimes d$ and this is Case c). This completes the proof in Case 2 and hence finishes the proof of the lemma.
\end{proof}

\begin{rem} \label{rem11} Recall that $\tau$ is an automorphism of $W(D_n)$ which fixes each generator $s_i$ for $2\leq i<n$ and exchanges the generators
$s_1$ and $s_u$. In view of this automorphism $\tau$, the careful readers might ask why the case when $(\alpha,\beta)\in\{(\alpha_1,\alpha_{2}),(\alpha_{2},
\alpha_1)\}$ and
$b=s_{3}\ltimes s_{2}\ltimes s_{u}\ltimes s_1\ltimes d$ (where $d\in I_\ast$ and $\rho(b)=\rho(d)+4$) does not appear in Lemma \ref{64a}.
In fact, since $s_{3}\ltimes s_{2}\ltimes \underbrace{s_{u}\ltimes s_1}\ltimes d=s_{3}\ltimes s_{2}\ltimes \underbrace{s_{1}\ltimes s_u}\ltimes d$, this
``missing case" is actually included in Case b).
\end{rem}

\begin{lem} \label{67a} Let $W=W(D_n)$ be the Weyl group of type $D_n$. Let $w\in I_\ast$ be an involution, and let $s=s_{\alpha}$ and $t=s_{\beta}$ for some
$\alpha\neq\beta$ in $\Delta$ with $$(\alpha,\beta)\in\{(\alpha_i,\alpha_{i+1}),(\alpha_{i+1},\alpha_{i}),(\alpha_2,u),(u,\alpha_{2})|1\leq i<n-1\}.
$$
Assume that $s,t$ are both descents of $w$ and let $b\in I_\ast$ be the unique minimal length representative of $W_K wW_K$ where $K:=\<s,t\>$. Assume further that
$b$ has no descents which commute with both $s$ and $t$. It then holds that $bt=sb$ and $bs=tb$ (Case 6 in the notation of Lemma \ref{7cases}) only if one of the
following occurs: \begin{enumerate}
\item $b=1$;
\item $(\alpha,\beta)\in\{(\alpha_i,\alpha_{i+1}),(\alpha_{i+1},\alpha_i)\}$ and $b=s_{i-1}\ltimes s_{i}\ltimes s_{i+1}\ltimes d$, where $2\leq i<n-1$, $d\in I_\ast$ and $\rho(b)=\rho(d)+3$;
\item $(\alpha,\beta)\in\{(\alpha_2,\alpha_3),(\alpha_3,\alpha_2)\}$ and $b=s_{4}\ltimes s_3\ltimes s_{2}\ltimes d$, where $d\in I_\ast$ and $\rho(b)=\rho(d)+3$;
\item $(\alpha,\beta)\in\{(\alpha_2,\alpha_3),(\alpha_3,\alpha_2)\}$ and $b=s_{u}\ltimes s_2\ltimes s_{3}\ltimes d$, where $d\in I_\ast$ and $\rho(b)=\rho(d)+3$;
\item $(\alpha,\beta)\in\{(\alpha_1,\alpha_2),(\alpha_2,\alpha_1)\}$ and $b=s_{3}\ltimes s_2\ltimes s_{1}\ltimes s_u\ltimes d$, where $d\in I_\ast$ and $\rho(b)=\rho(d)+4$;
\item $(\alpha,\beta)\in\{(u,\alpha_2),(\alpha_2,u)\}$ and $b=s_{3}\ltimes s_2\ltimes s_{u}\ltimes s_1\ltimes d$, where $d\in I_\ast$ and $\rho(b)=\rho(d)+4$;
\item $(\alpha,\beta)\in\{(\alpha_2,\alpha_3),(\alpha_3,\alpha_2)\}$, $b=s_u\ltimes s_1\ltimes s_2\ltimes s_u\ltimes s_1\ltimes s_3$ and $\rho(b)=6$;
\item $(\alpha,\beta)\in\{(\alpha_1,\alpha_2),(\alpha_2,\alpha_1)\}$, $b=s_u\ltimes s_3\ltimes s_2\ltimes s_1\ltimes s_u\ltimes s_3$ and $\rho(b)=6$;
\item $(\alpha,\beta)\in\{(u,\alpha_2),(\alpha_2,u)\}$, $b=s_1\ltimes s_3\ltimes s_2\ltimes s_u\ltimes s_1\ltimes s_3$ and $\rho(b)=6$;
\item $(\alpha,\beta)\in\{(\alpha_2,\alpha_3),(\alpha_3,\alpha_2)\}$, $b=s_u\ltimes s_1\ltimes s_2\ltimes s_4\ltimes s_3\ltimes s_u\ltimes s_2\ltimes s_u\ltimes s_1\ltimes s_4 $ and $\rho(b)=10$.
\end{enumerate}
\end{lem}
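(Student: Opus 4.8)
The plan is to follow the template of the proof of Lemma~\ref{64a}, adapting each step to the swap condition $bt = sb$, $bs = tb$ of Case~6, and then to isolate the genuinely new phenomenon that distinguishes this case. First, since $(\alpha,\beta)$ are adjacent nodes we have $m_{st}=3$, and by (\ref{reflection0}) the relations $bsb^{-1}=t$, $btb^{-1}=s$ translate into $b(\alpha)=\pm\beta$ and $b(\beta)=\pm\alpha$. As in Lemma~\ref{64a}, both $(s,t,b)$ and $(t,s,b)$ are reduced $I_\ast$-sequences by the Case~6 expression for $w$, so Corollary~\ref{length0} and Corollary~\ref{deletion2} force $b(\alpha)>0<b(\beta)$, whence $b(\alpha)=\beta$ and $b(\beta)=\alpha$. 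By the symmetry $s\leftrightarrow t$ we may assume $(\alpha,\beta)\in\{(\alpha_i,\alpha_{i+1}),(u,\alpha_2)\}$.

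The second step is to pin down the action of $b$ on the three $\eps$-coordinates touched by $\alpha$ and $\beta$, working in the signed-permutation model (\ref{sign0}). Solving $b(\alpha)=\beta$, $b(\beta)=\alpha$ coordinate-wise shows that, unlike Case~4, $b$ cannot fix the relevant block: for $(\alpha,\beta)=(\alpha_i,\alpha_{i+1})$ one is forced to $b(i)=-(i+2)$, $b(i+1)=-(i+1)$, $b(i+2)=-i$, and for $(u,\alpha_2)$ one gets $b(1)=3$, $b(2)=-2$, $b(3)=1$. In particular $b\neq 1$ always, so possibility~a) is in fact vacuous here and is recorded only for parallelism with Lemma~\ref{64a}. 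Crucially, each local solution involves an \emph{odd} number of sign changes inside the block, so by the parity condition in (\ref{sign0}) there must be an odd number of further sign changes among the remaining coordinates; this extra, unavoidable sign change is the source of all the sporadic elements in the list.

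With the local picture fixed, the third step reduces $b$ by peeling off reflections exactly as in the proof of Lemma~\ref{64a}: repeatedly applying Corollary~\ref{length0} together with Lemma~\ref{length1}, and using the hypothesis that no descent of $b$ commutes with both $s$ and $t$ to control the coordinates outside the block, one strips $b$ down to a smaller involution $d$. When the block sits in the interior of the diagram and the leftover sign change can be pushed away to the right, this produces the reducible families b)--f), matching the shapes already seen in Lemma~\ref{64a} (the strings $s_{i+2}\ltimes s_{i+1}\ltimes s_i$ or $s_{i-1}\ltimes s_i\ltimes s_{i+1}$ and their variants near the fork $\{s_u,s_1\}$). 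The delicate point is that the parity-forced sign change may be blocked from being peeled off when the block abuts the fork at positions $1,2$: there the interaction of the even-sign-change constraint with the forbidden-descent hypothesis obstructs the reduction and pins $b$ to one of the finitely many exceptional involutions g)--k), each of bounded rank ($\rho(b)=6$ or $10$), which one then exhibits and checks directly.

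I expect the main obstacle to be precisely this last point: proving that the list g)--k) is \emph{exhaustive}. Ruling out every other small configuration requires a careful finite analysis near the ends of the Dynkin diagram, tracking simultaneously (i) the parity of sign changes demanded by (\ref{sign0}), (ii) the descents of $b$ forbidden by hypothesis, and (iii) the minimality of $b$ as a double-coset representative; Lemma~\ref{SGCM1} will be the convenient tool for converting the constraints $b(\alpha)=\beta$, $b(\beta)=\alpha$ into sign information at adjacent coordinates. The bookkeeping is heavier than in Lemma~\ref{64a} because the swap forces $b$ to act nontrivially across the block rather than fixing it, so several more reduction branches must be run to completion.
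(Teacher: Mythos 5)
Your setup coincides with the paper's: from $bsb=t$, $btb=s$ and (\ref{reflection0}) you get $b(\alpha)=\pm\beta$, $b(\beta)=\pm\alpha$; reducedness of $(s,t,b)$ and $(t,s,b)$ coming from the Case 6 expression for $w$, together with Corollaries \ref{length0} and \ref{deletion2}, pins down $b(\alpha)=\beta$, $b(\beta)=\alpha$; and the forced block coordinates $b(i)=-(i+2)$, $b(i+1)=-(i+1)$, $b(i+2)=-i$ (resp.\ $b(1)=3$, $b(2)=-2$, $b(3)=1$ for $(u,\alpha_2)$) are exactly what the paper computes. Your parity observation is also sound and is implicitly what the paper uses (for instance to exclude $b(1)=-1$ with $b(i)=i$ for $i\geq 6$ in its Case 2, and to force $b(4)<0$ in its Cases 3 and 4), and you are right that a) is vacuous under the Case 6 hypothesis.

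The problem is that the proposal stops exactly where the substance of the lemma begins. From your ``third step'' on, you describe what must be done rather than doing it: you never run the branching on the coordinates of $b$ outside the block (in the paper's Case 2 the split into $b(5)\leq -6$, $b(5)=-5$ with $b(1)\in\{\pm 1,\pm 6\}$, $b(5)=-1$, and $b(5)>0$; in its Cases 3 and 4 the split into $b(4)\leq -5$ and $b(4)=-4$); you never show that in the terminal branches all remaining coordinates are fixed; and you never exhibit and verify the sporadic elements, e.g.\ that $b(1)=-1$, $b(j)=j$ for $j\geq 5$ forces $b=s_u\ltimes s_1\ltimes s_2\ltimes s_u\ltimes s_1\ltimes s_3$ with $\rho(b)=6$, or that $b(1)=1$, $b(5)=-5$, $b(j)=j$ for $j\geq 6$ forces the rank-$10$ element of case j). Proving that the list b)--j) is exhaustive \emph{is} the lemma; acknowledging it as ``the main obstacle'' and deferring it leaves a genuine gap rather than a proof. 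Two smaller points: your text refers to cases ``g)--k)'', but the statement has no case k); and Lemma \ref{SGCM1} cannot play the role you assign it, since it concerns the situation $b(\alpha)=\pm\alpha$, $b(\beta)=\pm\beta$ (the Case 4 situation of Lemma \ref{64a}), not the swapped situation $b(\alpha)=\pm\beta$ of Case 6 --- there the signs are determined by reducedness of $(s,t,b)$ and $(t,s,b)$, as you yourself argue earlier in the proposal.
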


\begin{proof} By assumption, we have that $m_{st}=3$, $bsb=t$ and $btb=s$. It follows that $b(\alpha)=\pm\beta, b(\beta)=\pm\alpha$ (by (\ref{reflection0})). Note that $b(\alpha)=\pm\beta$ if and only if  $b(\beta)=\pm\alpha$ because $b^2=1$. Without loss of generality, we can assume that $(\alpha,\beta)\in\{(\alpha_i,\alpha_{i+1}),(u,\alpha_2)|1\leq i<n-1\}$.

By the expression of $w$ given in Case 6 in the notation of Lemma \ref{7cases}, $s\ltimes t\ltimes b$ is a reduced $I_\ast$-sequence. Applying Corollary \ref{length0} and Corollary \ref{deletion2}, we can deduce that $b(\alpha)>0$. It follows that $b(\alpha)=\beta$.

Suppose that a) does not happen. There are only the following possibilities:

\smallskip
{\it Case 1.} $(\alpha,\beta)=(\alpha_i,\alpha_{i+1})$ for some $3\leq i<n-1$. Then $b(i)=-(i+2), b(i+2)=-i, b(i+1)=-(i+1)$. By Lemma \ref{length1} and our assumption, we can deduce that $s_jb>b, s_ub>b$ for any $1\leq j<i-1$ or $i+3\leq j<n$. Therefore, \begin{equation}\label{b1}
b(1)<b(2)<\dots<b(i-1),\,\,b(1)+b(2)>0\,\,\,\text{and}\,\,\,b(i+3)<b(i+4)<\dots<b(n) .
\end{equation}
Note that (\ref{b1}) implies that $b(i-1)>0$. Thus $$
b^{-1}(\alpha_{i-1})=b(\alpha_{i-1})=b(\eps_i)-b(\eps_{i-1})=-\eps_{i+2}-b(\eps_{i-1})<0 .
$$
Furthermore, $b(\alpha_{i-1})\neq\pm\alpha_{i-1}$ implies that $bs_{i-1}\neq s_{i-1}b$. By Corollary \ref{length0}, we get that $s_{i-1}\ltimes b=s_{i-1}bs_{i-1}$ and $\rho(b)=\rho(s_{i-1}\ltimes b)+1$.

Now, $$
(s_{i-1}bs_{i-1})^{-1}(\alpha_i)=(s_{i-1}bs_{i-1})(\eps_{i+1}-\eps_i)=-\eps_{i+1}-s_{i-1}b(\eps_{i-1})<0 ,
$$
and $(s_{i-1}bs_{i-1})(\alpha_{i})\neq\pm\alpha_{i}$ implies that $(s_{i-1}bs_{i-1})s_{i}\neq s_{i}(s_{i-1}bs_{i-1})$. It follows from Corollary \ref{length0} that $$
s_{i}\ltimes (s_{i-1}\ltimes b)=s_is_{i-1}bs_{i-1}s_i\,\,\,\text{and}\,\,\, \rho(b)=\rho(s_i\ltimes s_{i-1}\ltimes b)+2.
$$

Finally, $$
(s_is_{i-1}bs_{i-1}s_i)^{-1}(\alpha_{i+1})=(s_is_{i-1}bs_{i-1}s_i)(\eps_{i+2}-\eps_{i+1})=-\eps_{i-1}-s_is_{i-1}b(\eps_{i-1})<0 ,
$$
and $(s_is_{i-1}bs_{i-1}s_i)(\alpha_{i+1})\neq\pm\alpha_{i+1}$ implies that $$(s_is_{i-1}bs_{i-1}s_i)s_{i+1}\neq s_{i+1}(s_is_{i-1}bs_{i-1}s_i).$$ It follows from Corollary \ref{length0} that $$
s_{i+1}\ltimes (s_{i}\ltimes (s_{i-1}\ltimes b)=s_{i+1}s_is_{i-1}bs_{i-1}s_is_{i+1}\,\,\,\text{and}\,\,\, \rho(b)=\rho(s_{i+1}\ltimes s_i\ltimes s_{i-1}\ltimes b)+3.
$$
Set $d:=s_{i+1}\ltimes s_i\ltimes s_{i-1}\ltimes b$. Then $b=s_{i-1}\ltimes s_i\ltimes s_{i+1}\ltimes d$ and this is b) as required.

\smallskip
{\it Case 2.} $(\alpha,\beta)=(\alpha_2,\alpha_3)$. Then $b(2)=-4, b(4)=-2, b(3)=-3$. By Lemma \ref{length1} we can get that \begin{equation}\label{b2}
b(5)<b(6)<\dots<b(n) .
\end{equation}

Suppose that $b(5)<-1$. Then we must have that $b(5)\leq -5$ as $\{-2,-3,-4\}=\{b(3),b(4),b(2)\}$.

Suppose that $b(5)\leq -6$. In this case, $b^{-1}(\alpha_4)=b(\eps_5-\eps_4)=b(\eps_5)+\eps_2<0$ and $b(\alpha_4)\neq\pm\alpha_4$. Therefore $bs_4\neq s_4b$. By Corollary \ref{length0}, we get that $s_{4}\ltimes b=s_{4}bs_{4}$ and $\rho(b)=\rho(s_{4}\ltimes b)+1$. Next, $$(s_{4}bs_{4})^{-1}(\alpha_{3})=(s_{4}bs_{4})(\eps_4-\eps_{3})=s_4b(\eps_5)+\eps_3<0, $$
and $(s_{4}bs_{4})(\alpha_{3})\neq\pm\alpha_{3}$ (because $b(5)\leq -6$). This implies that $(s_{4}bs_{4})s_{3}\neq s_{3}(s_{4}bs_{4})$. It follows from Corollary \ref{length0} that $$
s_{3}\ltimes (s_{4}\ltimes b)=s_3s_{4}bs_{4}s_3\,\,\,\text{and}\,\,\, \rho(b)=\rho(s_3\ltimes s_{4}\ltimes b)+2.
$$
Finally, $$
(s_3s_{4}bs_{4}s_3)^{-1}(\alpha_{2})=(s_3s_{4}bs_{4}s_3)(\eps_{3}-\eps_{2})=s_3s_{4}b(\eps_{5})+\eps_{5}<0 ,
$$
and $(s_3s_{4}bs_{4}s_3)(\alpha_{2})\neq\pm\alpha_{2}$ implies that $$(s_3s_{4}bs_{4}s_3)s_{2}\neq s_{2}(s_3s_{4}bs_{4}s_3).$$ It follows from Corollary \ref{length0} that $$
s_{2}\ltimes (s_{3}\ltimes (s_{4}\ltimes b)=s_{2}s_3s_{4}bs_{4}s_3s_{2}\,\,\,\text{and}\,\,\, \rho(b)=\rho(s_{2}\ltimes s_3\ltimes s_{4}\ltimes b)+3.
$$
Set $d:=s_{4}\ltimes s_3\ltimes s_{2}\ltimes b$. Then $b=s_{4}\ltimes s_3\ltimes s_{2}\ltimes d$ and this is c) as required.

Suppose that $b(5)=-5$, then by (\ref{b2}) and the fact that $b^2=1$ we can deduce that $b(1)\in\{\pm 1,\pm 6\}$. Using a similar argument as in the last paragraph, we can prove that if $b(1)=6$ then $b=s_{1}\ltimes s_2\ltimes s_{3}\ltimes d$ with $\rho(b)=\rho(d)+3$; while if $b(1)=-6$ then $b=s_{u}\ltimes s_2\ltimes s_{3}\ltimes d$ with $\rho(b)=\rho(d)+3$. These are b) and d) respectively.

If $b(1)=-1$, then by (\ref{b2}) we can further deduce that $b(i)=i$ for any $i\geq 6$. However, this is impossible by (\ref{sign0}).

If $b(1)=1$, then by (\ref{b2}) again we can deduce that $b(i)=i$ for any $i\geq 6$. In this subcase, note that $s_us_1$ maps $1,2$ to $-1,-2$ respectively
and fixes any $j\in\{3,4,5\}$; while for each $2\leq k\leq 4$, $s_k\cdots s_2s_us_1s_2\cdots s_k$ maps $1, k+1$ to $-1,-(k+1)$ respectively and fixes any $j\in\{1,2,\cdots,5\}\setminus\{1,k+1\}$. We can deduce that $$\begin{aligned}
b&=s_2s_3s_2(s_3s_2s_us_1s_2s_3)(s_4s_3s_2s_us_1s_2s_3s_4)(s_2s_us_1s_2)(s_us_1)\\
&=s_us_1s_2s_3s_4s_3s_2s_us_1s_2s_3s_4s_us_1s_2s_us_1\\
&=s_u\ltimes s_1\ltimes s_2\ltimes s_4\ltimes s_3\ltimes s_u\ltimes s_2\ltimes s_u\ltimes s_1\ltimes s_4 ,
\end{aligned}
$$
and $\rho(b)=10$, where the last equality follows from a brute-force calculation. As a result, we get j) as required.

Suppose that $b(5)=-1$. Then $b(u)=b(\eps_1+\eps_2)=-\eps_5-\eps_4<0$ and $b(u)\neq\pm u$. Therefore $bs_u\neq s_ub$. By Corollary \ref{length0}, we get that $s_{u}\ltimes b=s_{u}bs_{u}$ and $\rho(b)=\rho(s_{u}\ltimes b)+1$. Next, $$(s_{u}bs_{u})^{-1}(\alpha_{2})=(s_{u}bs_{u})(\eps_3-\eps_{2})=-\eps_3-\eps_5<0, $$
and $(s_{u}bs_{u})(\alpha_{2})\neq\pm\alpha_{2}$. This implies that $(s_{u}bs_{u})s_{2}\neq s_{2}(s_{u}bs_{u})$. It follows from Corollary \ref{length0} that $$
s_{2}\ltimes (s_{u}\ltimes b)=s_2s_{u}bs_{u}s_2\,\,\,\text{and}\,\,\,\rho(b)=\rho(s_2\ltimes s_{u}\ltimes b)+2.
$$
Now we have that $(s_2s_{u}bs_{u}s_2)(\alpha_3)=(s_2s_{u}bs_{u}s_2)(\eps_4-\eps_3)=\eps_1-\eps_5<0$, and $(s_2s_{u}bs_{u}s_2)(\alpha_{3})\neq\pm\alpha_{3}$ implies that $$(s_2s_{u}bs_{u}s_2)s_{3}\neq s_{3}(s_2s_{u}bs_{u}s_2).$$ It follows from Corollary \ref{length0} that $$
s_{3}\ltimes (s_{2}\ltimes (s_{u}\ltimes b)=s_{3}s_2s_{u}bs_{u}s_2s_{3}\,\,\,\text{and}\,\,\, \rho(b)=\rho(s_{3}\ltimes s_2\ltimes s_{u}\ltimes b)+3.
$$
Set $d:=s_{3}\ltimes s_2\ltimes s_{u}\ltimes b$. Then $b=s_{u}\ltimes s_2\ltimes s_{3}\ltimes d$ and this is d) as required.

Therefore, it remains to consider the case when $b(5)>0$. It follows from (\ref{b2}) and the fact that $b^2=1$ that $b(1)=-1$ and $b(i)=i$ for any $5\leq i\leq n$. We can deduce that $$
b=(s_us_1)(s_2s_us_1s_2)(s_3s_2s_us_1s_2s_3)s_2s_3s_2=s_u\ltimes s_1\ltimes s_2\ltimes s_u\ltimes s_1\ltimes s_3,
$$
and $\rho(b)=6$, which is g) as required.

\smallskip
{\it Case 3.} $(\alpha,\beta)=(\alpha_1,\alpha_2)$. Then $b(1)=-3, b(3)=-1, b(2)=-2$. By Lemma \ref{length1} again, we can get that \begin{equation}\label{b3}
b(4)<b(5)<\dots<b(n) .
\end{equation}

Applying (\ref{b3}) and (\ref{sign0}), we see that $b(4)<0$ in this case. Then we must have that $b(4)\leq -4$ as $\{-1,-2,-3\}=\{b(1),b(2),b(3)\}$. Assume that $b(4)\leq -5$. Then $b(\alpha_3)=b(\eps_4-\eps_3)=b(\eps_4)+\eps_1<0$ and $b(\alpha_3)\neq\pm\alpha_3$. Therefore $bs_3\neq s_3b$. By Corollary \ref{length0}, we get that $s_{3}\ltimes b=s_{3}bs_{3}$ and $\rho(b)=\rho(s_{3}\ltimes b)+1$. Next, $$(s_{3}bs_{3})^{-1}(\alpha_{2})=(s_{3}bs_{3})(\eps_3-\eps_{2})=s_3b(\eps_4)+\eps_2<0, $$
and $(s_{3}bs_{3})(\alpha_{2})\neq\pm\alpha_{2}$ (because $b(4)\leq -5$). This implies that $(s_{3}bs_{3})s_{2}\neq s_{2}(s_{3}bs_{3})$. It follows from Corollary \ref{length0} that $$
s_{2}\ltimes (s_{3}\ltimes b)=s_2s_{3}bs_{3}s_2\,\,\,\text{and}\,\,\, \rho(b)=\rho(s_2\ltimes s_{3}\ltimes b)+2.
$$
By a similar argument, we can get that $$\begin{aligned}
&s_{1}\ltimes s_{2}\ltimes s_{3}\ltimes b=s_{1}s_2s_{3}bs_{3}s_2s_{1}\,\,\,\text{and}\,\,\,\rho(b)=\rho(s_{1}\ltimes s_2\ltimes s_{3}\ltimes b)+3,\\
&s_{u}\ltimes s_1\ltimes s_{2}\ltimes s_{3}\ltimes b=s_us_{1}s_2s_{3}bs_{3}s_2s_{1}s_u\,\,\,\text{and}\,\,\, \rho(b)=\rho(s_u\ltimes s_{1}\ltimes s_2\ltimes s_{3}\ltimes b)+4 .
\end{aligned}
$$
Set $d:=s_u\ltimes s_{1}\ltimes s_2\ltimes s_{3}\ltimes b$. Then $b=s_{3}\ltimes s_2\ltimes s_{1}\ltimes s_u\ltimes d$ and this is e) as required.

It remains to consider the case when $b(4)=-4$. In this case, by (\ref{b3}) again, we can deduce that $b(j)=j$ for any $j\geq 5$. It follows that $$
b=(s_us_1)(s_2s_us_1s_2)(s_3s_2s_us_1s_2s_3)s_1s_2s_1=s_u\ltimes s_3\ltimes s_2\ltimes s_1\ltimes s_u\ltimes s_3,
$$
and $\rho(b)=6$, which is h) as required.


\smallskip
{\it Case 4.} $(\alpha,\beta)=(u,\alpha_2)$. Then $b(1)=3, b(3)=1, b(2)=-2$. By Lemma \ref{length1} again, we can get that \begin{equation}\label{b4}
b(1)+b(2)>0\,\,\,\text{and}\,\,\,b(4)<b(5)<\dots<b(n) .
\end{equation}

Applying (\ref{b4}) it is easy to see that $b(4)<0$ in this case. Then we must have that $b(4)\leq -4$ as $\{-1,-2,-3\}=\{b(-1),b(2),b(-3)\}$. Assume that $b(4)\leq -5$. Then $b(\alpha_3)=b(\eps_4-\eps_3)=b(\eps_4)-\eps_1<0$ and $b(\alpha_3)\neq\pm\alpha_3$. Therefore $bs_3\neq s_3b$. By Corollary \ref{length0}, we get that $s_{3}\ltimes b=s_{3}bs_{3}$ and $\rho(b)=\rho(s_{3}\ltimes b)+1$. Next, $$
(s_{3}bs_{3})^{-1}(\alpha_{2})=(s_{3}bs_{3})(\eps_3-\eps_{2})=s_3b(\eps_4)+\eps_2<0, $$
and $(s_{3}bs_{3})(\alpha_{2})\neq\pm\alpha_{2}$ (because $b(4)\leq -5$). This implies that $(s_{3}bs_{3})s_{2}\neq s_{2}(s_{3}bs_{3})$. It follows from Corollary \ref{length0} that $$
s_{2}\ltimes (s_{3}\ltimes b)=s_2s_{3}bs_{3}s_2\,\,\,\text{and}\,\,\, \rho(b)=\rho(s_2\ltimes s_{3}\ltimes b)+2.
$$
By a similar argument, we can get that $$\begin{aligned}
&s_{u}\ltimes s_{2}\ltimes s_{3}\ltimes b=s_{u}s_2s_{3}bs_{3}s_2s_{u}\,\,\,\text{and}\,\,\, \rho(b)=\rho(s_{u}\ltimes s_2\ltimes s_{3}\ltimes b)+3,\\
&s_1\ltimes s_{u}\ltimes s_{2}\ltimes s_{3}\ltimes b=s_1s_{u}s_2s_{3}bs_{3}s_2s_{u}s_1\,\,\,\text{and}\,\,\,\rho(b)=\rho(s_1\ltimes s_{u}\ltimes s_2\ltimes s_{3}\ltimes b)+4.
\end{aligned}
$$
Set $d:=s_1\ltimes s_{u}\ltimes s_2\ltimes s_{3}\ltimes b$. Then $b=s_{3}\ltimes s_2\ltimes s_{u}\ltimes s_1\ltimes d$ and this is f) as required..

It remains to consider the case when $b(4)=-4$. In this case, by (\ref{b4}) again, we can deduce that $b(j)=j$ for any $j\geq 5$. It follows that $$
b=(s_us_1)(s_3s_2s_us_1s_2s_3)s_1s_2s_1=s_1\ltimes s_3\ltimes s_2\ltimes s_u\ltimes s_1\ltimes s_3,
$$
and $\rho(b)=6$, which is i) as required.
This completes the proof of the lemma.
\end{proof}

\begin{rem} Note that the possible values of $b$ in Lemma \ref{67a} are preserved by the automorphism $\tau$ of $W(D_n)$.
For example, for j) in Lemma \ref{67a}, we actually have that $$
s_u\ltimes s_1\ltimes s_2\ltimes s_4\ltimes s_3\ltimes s_u\ltimes s_2\ltimes s_u\ltimes s_1\ltimes s_4=s_1\ltimes s_u\ltimes s_2\ltimes s_4\ltimes s_3\ltimes s_1\ltimes s_2\ltimes s_1\ltimes s_u\ltimes s_4 .
$$
To see this, it suffices to show that $s_u\ltimes s_2\ltimes s_u\ltimes s_1\ltimes s_4=s_1\ltimes s_2\ltimes s_1\ltimes s_u\ltimes s_4$. In fact, we have that $$\begin{aligned}
&\quad\,s_u\ltimes s_2\ltimes s_u\ltimes s_1\ltimes s_4=\underbrace{s_2\ltimes s_u\ltimes s_2}\ltimes s_1\ltimes s_4=s_4\ltimes s_2\ltimes s_u\ltimes s_2\ltimes s_1\\
&=s_4\ltimes s_2\ltimes s_u\ltimes \underbrace{s_1\ltimes s_2}=s_4\ltimes s_2\ltimes \underbrace{s_1\ltimes s_u}\ltimes s_2=s_4\ltimes s_2\ltimes s_1\ltimes \underbrace{s_2\ltimes s_u}\\
&=s_4\ltimes \underbrace{s_1\ltimes s_2\ltimes s_1}\ltimes s_u=s_1\ltimes s_2\ltimes s_1\ltimes s_u\ltimes s_4 .\end{aligned}
$$
\end{rem}

\begin{rem} \label{rem0} We consider reduce $I_\ast$-expressions for involutions in the Weyl group of type $D_n$. In this case, in addition to the basic braid
$I_\ast$-transformations given by Proposition \ref{braid}, one clearly has to add the following natural ``right end transformations": $$
\begin{aligned}
& s_i\ltimes s_{i+1}\longleftrightarrow s_{i+1}\ltimes s_i,\,\,\,s_2\ltimes s_{u}\longleftrightarrow s_{u}\ltimes s_2,\\
& s_j\ltimes s_{u}\longleftrightarrow s_{u}\ltimes s_j,\,\,\,s_k\ltimes s_{l}\longleftrightarrow s_{l}\ltimes s_k,
\end{aligned}
$$
where $1\leq i<n-1$, $1\leq j,k,l<n$, $j\neq 2$, $|k-l|>1$. Given the result \cite[Definition 2.12, Theorem 3.1]{HZH1} for the type $A$ case, it is tempting to
speculate that for involutions in $W(D_n)$ these are all the basic braid $I_\ast$-transformation that we need. However, it turns out that this is {\it NOT} the
case. In fact, one has to add one extra transformation in the case of type $D_n$ (see the last transformation in Definition \ref{braid1}), which is a new
phenomenon for type $D_n$.
\end{rem}

\begin{dfn}\label{braid1} By a basic braid $I_\ast$-transformation, we mean one of the following transformations and their inverses: $$
\begin{aligned}
1)\,\, &(\cdots,s_j,s_{j+1},s_j,\cdots)\longmapsto (\cdots,s_{j+1},s_j,s_{j+1},\cdots) ,\\
2)\,\, &(\cdots,s_u,s_{2},s_u,\cdots)\longmapsto (\cdots,s_2,s_u,s_2,\cdots) ,\\
3)\,\, & (\cdots,s_b,s_{c},\cdots)\longmapsto (\cdots,s_{i_a},s_{c},s_b,\cdots) ,\\
4)\,\, & (\cdots,s_d,s_u,\cdots)\longmapsto (\cdots,s_u,s_d,\cdots) ,\\
5)\,\, & (\cdots,s_k,s_{k+1})\longmapsto (\cdots,s_{k+1},s_k) ,\\
6)\,\, & (\cdots,s_2,s_u)\longmapsto (\cdots,s_u,s_2) ,\\
7)\,\, & (\cdots,s_2,s_3,s_u,s_1,s_2,s_u,s_1,s_3)\longmapsto (\cdots,s_3,s_2,s_u,s_1,s_2,s_u,s_1,s_3) ,
\end{aligned}
$$
where all the sequences appearing above are reduced sequences, and the entries marked by corresponding ``$\cdots$" must match, and in the first two
transformations (i.e., 1) and 2)) we further require that the right end part entries marked by ``$\cdots$" must be non-empty. We define a braid
$I_\ast$-transformation to be the composition of a series of basic braid $I_\ast$-transformations.
\end{dfn}

Let $w\in I_\ast$ and $s_{i_1},\cdots,s_{i_k}\in S$. By definition, it is clear that $(s_{i_1},\cdots,s_{i_k},w)$ is a reduced sequence if and only if $(s_{i_1},\cdots,s_{i_k},s_{j_1},\cdots,s_{j_t})$ is a reduced sequence for some (and any) reduced $I_\ast$-expression $(s_{j_1},\cdots,s_{j_t})$ of $w$.

\begin{dfn} \label{braid3} Let $(s_{i_1},\cdots,s_{i_k},w), (s_{j_1},\cdots,s_{j_l},u)$ be two reduced $I_\ast$-sequences, where $w,u\in I_{\ast}$. We shall write $(s_{i_1},\cdots,s_{i_k},w)\longleftrightarrow (s_{j_1},\cdots,s_{j_l},u)$ whenever there exists a series of braid $I_\ast$-transformations which transform $$(s_{i_1},\cdots,s_{i_k},s_{l_1},\cdots,s_{l_b})$$ into $(s_{j_1},\cdots,s_{j_l},s_{p_1},\cdots,s_{p_c})$, where $(s_{l_1},\cdots,s_{l_b})$ and $(s_{p_1},\cdots,s_{p_c})$ are some reduced $I_\ast$-expressions of $w$ and $u$ respectively. Moreover, we shall also write $$
(i_1,\cdots,i_k)\longleftrightarrow (j_1,\cdots,j_k)
$$
whenever $(s_{i_1},\cdots,s_{i_k})\longleftrightarrow (s_{j_1},\cdots,s_{j_k})$.
\end{dfn}

\begin{thm} \label{braid0} Let $(s_{i_1},\cdots,s_{i_k}), (s_{j_1},\cdots,s_{j_k})$ be two reduced $I_\ast$-sequences which can be transformed into each other through a series of basic braid $I_{\ast}$-transformations. Then $$
s_{i_1}\ltimes s_{i_2}\ltimes\dots\ltimes s_{i_k}=s_{j_1}\ltimes s_{j_2}\ltimes\dots\ltimes s_{j_k} .
$$
\end{thm}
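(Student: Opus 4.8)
The plan is to exploit transitivity and locality. Since a braid $I_\ast$-transformation is by definition a composition of basic ones and ``computes the same element of $I_\ast$'' is an equivalence relation, it suffices to treat a single basic braid $I_\ast$-transformation. Such a transformation fixes a prefix of operators $a_1,\dots,a_p$ and a suffix whose $\ltimes$-product is some $w\in I_\ast$, and replaces a contiguous block $\mathbf{b}_1$ by a block $\mathbf{b}_2$. Because $\ltimes$ is a genuine right-to-left operation on $I_\ast$ (Definition \ref{twistedinvolutions2}), the value of the whole sequence is $a_1\ltimes\cdots\ltimes a_p\ltimes(\mathbf{b}_1\ltimes w)$; hence once I show the local identity $\mathbf{b}_1\ltimes w=\mathbf{b}_2\ltimes w$ in $I_\ast$, applying the operator $a_1\ltimes\cdots\ltimes a_p\ltimes$ to both sides yields the theorem. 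Here the suffix $w$ is computed by a reduced tail of the given reduced sequence, using that any tail of a reduced sequence is again reduced (immediate from Lemma \ref{rankfunc}), and $(\mathbf{b}_i,w)$ is a reduced $I_\ast$-expression by the observation following Definition \ref{braid1}. Thus in every case the hypotheses of Proposition \ref{braid} and of the case analysis after Lemma \ref{7cases} are available.

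For the transformations of types 1)--6) the required local identities are the classical ones. Transformations 1) and 2) are the ordinary braid moves with $m_{st}=3$: since the right end is required non-empty, $w\neq 1$ and $(s,t,s,w)$ is reduced, so Proposition \ref{braid} gives $s\ltimes t\ltimes s\ltimes w=t\ltimes s\ltimes t\ltimes w$ directly. Transformations 3) and 4) swap two commuting generators ($m_{st}=2$), which is Proposition \ref{braid} with $m=2$ when $w\neq 1$ and the trivial identity $s\ltimes t=st=ts=t\ltimes s$ when $w=1$. Transformations 5) and 6) are the right-end swaps of two non-commuting adjacent generators, where $w=1$: here $s\ltimes t=sts=tst=t\ltimes s$ by the braid relation (equivalently, this is Case 4 of the analysis after Lemma \ref{7cases} with $b=1$). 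In each instance applying the fixed prefix operators finishes the case.

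The substantive case is transformation 7), the extra relation special to type $D_n$. Here the fixed suffix computes the element $b=s_u\ltimes s_1\ltimes s_2\ltimes s_u\ltimes s_1\ltimes s_3$ with $\rho(b)=6$, which is exactly item g) of Lemma \ref{67a} for $(\alpha,\beta)=(\alpha_2,\alpha_3)$, i.e.\ $(s,t)=(s_2,s_3)$. First I would check that this $b$ is the minimal-length representative of $W_KbW_K$ and satisfies $bs_2b=s_3$ and $bs_3b=s_2$, placing us in Case 6 (with $m=3$ odd) of the analysis after Lemma \ref{7cases}; that analysis then yields $s_2\ltimes s_3\ltimes b=t\ltimes s\ltimes b=s_3\ltimes s_2\ltimes b$, which is precisely the local identity needed. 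The hard part will be exactly this verification that $b$ lands in Case 6: it is a finite but genuinely type-$D$ computation with signed permutations (the reflection conditions $b(\alpha_2)=\alpha_3$, $b(\alpha_3)=\alpha_2$ together with the reducedness of $(s_2,s_3,b)$ and $(s_3,s_2,b)$), and if one prefers it can be carried out as a direct brute-force evaluation of the two signed permutations, in the style of the computations in the remarks following Lemmas \ref{64a} and \ref{67a}. Once $s_2\ltimes s_3\ltimes b=s_3\ltimes s_2\ltimes b$ is established, applying the prefix operators $a_1\ltimes\cdots\ltimes a_p\ltimes$ completes the proof.
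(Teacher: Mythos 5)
Your proposal is correct and takes essentially the same route as the paper: it handles transformations 1)--6) via Proposition \ref{braid} together with the trivial commutation and right-end braid identities, and reduces the extra transformation 7) to the single local identity $s_2\ltimes s_3\ltimes b=s_3\ltimes s_2\ltimes b$ with $b=s_u\ltimes s_1\ltimes s_2\ltimes s_u\ltimes s_1\ltimes s_3$, which the paper settles by the brute-force evaluation (\ref{dd}). Your alternative packaging of that last check through Case 6 of the discussion after Lemma \ref{7cases} (verifying $s_2b=bs_3$ and $s_3b=bs_2$ for the explicit signed permutation $b$) is equivalent in substance, since once those relations hold both sides equal $b\,s_2s_3s_2$; the only part the paper leaves implicit and you usefully spell out is the prefix/locality argument.
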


\begin{proof} In fact, this follows easily from Proposition \ref{braid} and Remark \ref{rem0} except for the last transformation (i.e., 7)) in Definition \ref{braid1}. For that one, one can use a brutal-force calculation to check that \begin{equation}\label{dd}
\text{$\begin{matrix}s_2\ltimes s_3\ltimes s_u\ltimes s_1\ltimes s_2\ltimes s_u\ltimes s_1\ltimes s_3=s_2s_3s_us_1s_2s_us_1s_3s_2s_1s_us_3\\
=s_3\ltimes s_2\ltimes s_u\ltimes s_1\ltimes s_2\ltimes s_u\ltimes s_1\ltimes s_3.\end{matrix}$}
\end{equation}
This completes the proof of the theorem.
\end{proof}

A well-known classical fact of Matsumoto (\cite{Mat}) says that any two reduced expressions for an element in any Weyl group can be transformed into each other through a series of basic braid transformations. In Theorem \ref{braid0} we have shown that any basic braid $I_\ast$-transformations on reduced $I_\ast$-expression for a given $w\in I_\ast$ do not change the involution $w$ itself. The following theorem says something more than this.

\begin{thm} \label{mainthm0} Let $w\in I_\ast$. Then any two reduced $I_\ast$-expressions for $w$ can be transformed into each other through a series of basic braid $I_\ast$-transformations.
\end{thm}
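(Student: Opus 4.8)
The plan is to induct on the rank $\rho(w)=k$. Let $E=(s_{i_1},\dots,s_{i_k})$ and $F=(s_{j_1},\dots,s_{j_k})$ be two reduced $I_\ast$-expressions for $w$, and put $s=s_{i_1}$, $t=s_{j_1}$. If $s=t$, then by Lemma \ref{square} both tails $(s_{i_2},\dots,s_{i_k})$ and $(s_{j_2},\dots,s_{j_k})$ are reduced $I_\ast$-expressions for $s\ltimes w$, which has rank $k-1$; the inductive hypothesis connects the tails, and since $s\ltimes(s\ltimes w)=w$ has rank $k>\rho(s\ltimes w)$, the letter $s$ is not a descent of $s\ltimes w$ (Lemma \ref{rankfunc}), so prepending $s$ keeps every intermediate expression reduced. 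Hence $E\longleftrightarrow F$.

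The substance is the case $s\neq t$; then $s,t$ are both descents and we are in the situation of the case analysis following Lemma \ref{7cases}, with $K=\{s,t\}$, $b$ the minimal-length element of $W_KwW_K$, $m=m_{st}$, and canonical reduced expressions $w=\TPro(s,t;p)\ltimes b=\TPro(t,s;p)\ltimes b$ for the appropriate $p$. Fix one reduced expression $R_b$ of $b$, and let $G_s$ be the reduced $I_\ast$-expression consisting of the length-$p$ alternating word $s,t,s,\dots$ followed by $R_b$, and $G_t$ the one with $t,s,t,\dots$ followed by $R_b$. Deleting the leading letter from $G_s$ (resp. from $E$) yields a reduced expression for $s\ltimes w$ of rank $k-1$; by the inductive hypothesis these two tails are connected, and prepending $s$ (valid as above) gives $E\longleftrightarrow G_s$. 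Symmetrically $F\longleftrightarrow G_t$. Everything thus reduces to connecting the two canonical forms $G_s\longleftrightarrow G_t$ by basic braid $I_\ast$-transformations, with $R_b$ fixed.

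To connect $G_s$ and $G_t$ I would run a secondary induction on $\rho(b)$. If $b$ has a descent $r$ commuting with both $s$ and $t$, choose $R_b$ to begin with $r$ (using the theorem for $b$, which has smaller rank), move $r$ to the front of both $G_s$ and $G_t$ past the alternating prefix via the commuting transformations 4) and the like, and peel it off; a short check shows the reduced element $r\ltimes b$ lies in the same one of the seven cases, so the problem reduces to a strictly smaller $b$. This legitimizes the standing hypothesis of Lemmas \ref{64a} and \ref{67a}, so in the base case $b$ has no descent commuting with both $s$ and $t$. Now split by the seven cases. When $p=m$ (Cases 1--3, and the commuting cases with $m=2$), the prefix change $\TPro(s,t;m)\leftrightarrow\TPro(t,s;m)$ is a full braid relation with a necessarily nonempty suffix (since $b\neq1$ there), hence transformation 1) or 2) for $m=3$, and a commuting swap 3) or 4) for $m=2$. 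The delicate regime is $p<m$, i.e. Cases 4 and 6 with $m=3$ (so $p=2$) and the short Case 7; there $\TPro(s,t;2)\leftrightarrow\TPro(t,s;2)$ is only a length-two alternation of a \emph{noncommuting} pair and is not a braid relation. Here I invoke the explicit classifications: Lemma \ref{64a} lists the admissible $b$ in Case 4 and Lemma \ref{67a} those in Case 6, and for each listed $b$ one writes down a finite sequence of transformations 1)--6) (the right-end adjacent swaps 5), 6) together with interior braids 1), 2) and commuting moves 3), 4)) carrying $G_s$ to $G_t$; the automorphism $\tau$ of $W(D_n)$, via the remarks following these lemmas, halves this bookkeeping.

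I expect the main obstacle to be exactly one configuration: Case 6 with $b=s_u\ltimes s_1\ltimes s_2\ltimes s_u\ltimes s_1\ltimes s_3$ (item (g) of Lemma \ref{67a}, where $\{s,t\}=\{s_2,s_3\}$). There $G_s=(s_2,s_3,s_u,s_1,s_2,s_u,s_1,s_3)$ and $G_t=(s_3,s_2,s_u,s_1,s_2,s_u,s_1,s_3)$ are genuinely \emph{not} connected by transformations 1)--6); this forced connection is precisely transformation 7), whose validity as an identity of involutions was already verified in the brute-force computation \eqref{dd} inside the proof of Theorem \ref{braid0}. Confirming that every other admissible $b$ in Lemmas \ref{64a} and \ref{67a} --- including the exceptional items (h), (i), (j) of Lemma \ref{67a} with $\rho(b)=6,6,10$ --- is resolvable \emph{without} transformation 7) is the bulk of the remaining, essentially mechanical, casework. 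Thus the only new ingredient beyond the usual braid relations and the natural right-end transformations is transformation 7), which is the new phenomenon particular to type $D_n$.
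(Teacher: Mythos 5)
Your skeleton for the non-commuting case reproduces the first half of the paper's argument: induction on $\rho(w)$, reduction to two canonical forms built over $b$, a preliminary reduction legitimizing the ``no descent of $b$ commutes with both $s$ and $t$'' hypothesis, then Lemmas \ref{64a} and \ref{67a} plus the extra transformation 7) of Definition \ref{braid1} when $m_{st}=3$. The genuine gap is the commuting case $m_{st}=2$, specifically Case 7 of Lemma \ref{7cases} with $m=2$: there $sb=bt$, $tb=bs$, the alternating prefix has length $m/2=1$ (not two, and $s,t$ commute, so your description of the delicate regime as ``a length-two alternation of a noncommuting pair'' does not apply to it), and $w=s\ltimes b=t\ltimes b$, so one must connect $(s,R_b)$ with $(t,R_b)$. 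For this you ``invoke the explicit classifications'', but Lemmas \ref{64a} and \ref{67a} classify $b$ only for the order-3 pairs $(\alpha_i,\alpha_{i+1})$ and $(u,\alpha_2)$; the paper contains no classification for Case 7 with $m=2$, and no finite, $n$-independent list can exist. Concretely, take $(s,t)=(s_1,s_u)$ and, for any $r\geq 1$ with $2r+1\leq n$, let $b_r$ be the involution with $b_r(1)=-1$, $b_r(2)=2$, $b_r(j)=-(2r+4-j)$ for $3\leq j\leq 2r+1$, and $b_r(j)=j$ otherwise (so $b_1=s_2\ltimes s_1\ltimes s_u$ is the element negating $\eps_1,\eps_3$). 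Then $b_r(\eps_2-\eps_1)=\eps_1+\eps_2$, hence $s_1b_r=b_rs_u$ and $s_ub_r=b_rs_1$; the unique descent of $b_r$ is $s_2$, which commutes with neither $s_1$ nor $s_u$, so these $b_r$ survive your peeling reduction; and they have unbounded length as $r$ grows. Already for $b_1$ your proposal offers no way to connect $(s_1,s_2,s_1,s_u)\longleftrightarrow(s_u,s_2,s_1,s_u)$.

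This is exactly what the entire second half of the paper's proof (from ``Henceforth we assume that $m=2$'' onward) is for, and it runs on a different mechanism: Proposition \ref{exchange} (the Exchange Property) is used to produce from $(i_0,i_1,\dots,i_k)$ a reduced $I_\ast$-expression of $w$ beginning with $s_{j_0}$ and, after disposing of the easy subcase, to reduce everything to the single assertion $(j_0,i_1,\dots,i_k)\longleftrightarrow(i_0,i_1,\dots,i_k)$; this is then proved by an iterative descent-peeling argument that repeatedly invokes the already-settled order-3 case (the paper's ``Observation 1'' --- note this forces the order-3 case to be completed before the order-2 case in the induction) and whose chains grow with $n$, as in the displayed computation involving $s_1,s_2,\dots,s_{n-1}$ and $w_n$. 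So the commuting case needs a uniform inductive argument, not per-$b$ bookkeeping, and your proposal omits it entirely. A smaller correction: you predict that items h) and i) of Lemma \ref{67a} can be resolved without transformation 7); in the paper's chains both of these cases do use transformation 7) (only the long case j) avoids it), so 7) is not forced by item g) alone.
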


\begin{proof} We prove the theorem by induction on $\rho(w)$. Suppose that the theorem holds for any $w\in I_\ast$  with $\rho(w)\leq k$. Let $w\in I_\ast$ with $\rho(w)=k+1$. Let $(s_{i_0},s_{i_1},s_{i_2},\cdots,s_{i_k})$ and $(s_{j_0},s_{j_1},s_{j_2},\cdots,s_{j_k})$ be two reduced $I_\ast$-expressions for $w\in I_\ast$. We need to prove that \begin{equation}\label{goal}
(i_0,i_1,\cdots,i_k)\longleftrightarrow (j_0,j_1,\cdots,j_k).
\end{equation}
For simplicity, we set $s=s_{i_0}, t=s_{j_0}$. Let $m$ be the order of $st$.

If $m=3$, then we are in the situations of Cases 1,2,3,4,6 of Lemma \ref{7cases}. Suppose that we are in Cases 1,2,3 of Lemma \ref{7cases}. Then we get that $$
s_{i_0}\ltimes s_{i_1}\ltimes\cdots\ltimes s_{i_k}\longleftrightarrow s\ltimes t\ltimes s\ltimes b\longleftrightarrow t\ltimes s\ltimes t\ltimes b\longleftrightarrow s_{j_0}\ltimes s_{j_1}\ltimes\cdots\ltimes s_{j_k},
$$
where the first and the third ``$\longleftrightarrow$" follows from induction hypothesis, and the second ``$\longleftrightarrow$" follows from the expression of $w$ given in Cases 1,2,3 of Lemma \ref{7cases}. It remains to consider Cases 4,6 of Lemma \ref{7cases}. To this end, we shall apply Lemmas \ref{64a} and \ref{67a}. In these cases, if $b=1$, then $k=1$ and $s_{i_0}\ltimes s_{i_1}=s\ltimes t\longleftrightarrow t \ltimes s=s_{j_0}\ltimes s_{j_1}$.
Henceforth, we assume that $b\neq 1$.

Our strategy is as follows: in order to prove (\ref{goal}), it suffices to show that \begin{equation}\label{strategy1}
\text{$(i_0,i_1,\cdots,i_k)\longleftrightarrow (s_\alpha,\dots)$ and
$(j_0,j_1,\cdots,j_k)\longleftrightarrow (s_\alpha,\dots)$ for some $s_\alpha\in S$.}
\end{equation}
Once this is proved, then  (\ref{goal}) follows from induction
hypothesis.

With the this in mind, our task is reduced to the verification of (\ref{strategy1}). In fact, (\ref{strategy1}) is easy to verify except for
Cases g), h), i), j) in Lemma \ref{67a}. Suppose that we are in Case b) of Lemma \ref{64a}. Without
loss of generality, we assume that $i_0=i$ and $j_0=i+1$. Then $$\begin{aligned}
&(s_{j_0},s_{j_1},\cdots,s_{j_k})\longleftrightarrow (s_{i+1},s_i,b)\longleftrightarrow(s_{i+1},s_i,\underbrace{s_{i+2},s_{i+1},s_i,d})\longleftrightarrow\\
&\qquad (s_{i+1},\underbrace{s_{i+2},s_i},s_{i+1},s_i,d)\longleftrightarrow(s_{i+1},s_{i+2},\underbrace{s_{i+1},s_{i},s_{i+1}},d)\\
&\qquad \longleftrightarrow(\underbrace{s_{i+2},s_{i+1},s_{i+2}},s_{i},s_{i+1},d),
\end{aligned}
$$
and $$\begin{aligned}
&(s_{i_0},s_{i_1},\cdots,s_{i_k})\longleftrightarrow (s_i,s_{i+1},b)\longleftrightarrow(s_i,s_{i+1},\underbrace{s_{i+2},s_{i+1},s_i,d})\longleftrightarrow\\
&\qquad (s_{i},\underbrace{s_{i+2},s_{i+1},s_{i+2}},s_i,d)\longleftrightarrow(\underbrace{s_{i+2},s_{i}},s_{i+1},s_{i+2},s_{i},d),
\end{aligned}
$$
as required, where in both equalities, the first ``$\longleftrightarrow$" follows from induction hypothesis.

Suppose that we are in Case c) of Lemma \ref{64a}. Without loss of generality, we assume that $i_0=u$ and $j_0=2$. Then $$\begin{aligned}
&(s_{i_0},s_{i_1},\cdots,s_{i_k})\longleftrightarrow (s_u,s_2,b)\longleftrightarrow(s_u,s_2,\underbrace{s_3,s_2,s_1,s_u,d})
\longleftrightarrow\\
&\qquad (s_u,\underbrace{s_3,s_2,s_3},s_1,s_u,d)\longleftrightarrow (\underbrace{s_3,s_u},s_2,s_3,s_1,s_u,d),\\
\end{aligned}
$$
and $$\begin{aligned}
&(s_{j_0},s_{j_1},\cdots,s_{j_k})\longleftrightarrow (s_2,s_u,b)\longleftrightarrow(s_2,s_u,\underbrace{s_3,s_2,s_1,s_u,d})
\longleftrightarrow\\
&\qquad (s_2,\underbrace{s_3,s_u},s_2,\underbrace{s_u,s_1},d)\longleftrightarrow (s_2,s_3,\underbrace{s_2,s_u,s_2},s_1,d),\\
&\qquad \longleftrightarrow (\underbrace{s_3,s_2,s_3},s_u,s_2,s_1,d),
\end{aligned}
$$
as required.
By a similar argument, we can prove (\ref{goal}) if we are in Cases a),b),c),d),e),f) of Lemma \ref{67a}.

Suppose that we are in Case g) of Lemma \ref{67a}. Without loss of generality, we assume that $i_0=2$ and $j_0=3$. Then we have that $$\begin{aligned}
&(s_{i_0},s_{i_1},\cdots,s_{i_k})\longleftrightarrow (s_2, s_3, s_u, s_1, s_2, s_u, s_1, s_3)\longleftrightarrow (s_3, s_2, s_u, s_1, s_2, s_u, s_1, s_3)\\
&\qquad \longleftrightarrow (s_{j_0},s_{j_1},\cdots,s_{j_k}),
\end{aligned}$$
where the first and the third ``$\longleftrightarrow$" follows from induction hypothesis and the second ``$\longleftrightarrow$" follows from the extra
transformation 7) in Definition \ref{braid1}.

Suppose that we are in Case h) of Lemma \ref{67a}. Without loss of generality, we assume that $i_0=2$ and $j_0=1$. Then we have that $$\begin{aligned}
&\quad\,(s_{i_0},s_{i_1},\cdots,s_{i_k})\longleftrightarrow (s_2, s_1,b)\longleftrightarrow (s_2, s_1, \underbrace{s_u, s_3, s_2, s_1, s_u, s_3})\\
&\qquad\longleftrightarrow (s_2, \underbrace{s_3, s_u, s_1}, s_2, \underbrace{s_u, s_1}, s_3)\longleftrightarrow (\underbrace{s_3, s_2, s_u, s_1, s_2, s_u, s_1, s_3})\\
&\qquad\longleftrightarrow (s_3, s_2, \underbrace{s_1, s_u}, s_2, s_u, s_1, s_3)\longleftrightarrow (s_3, s_2, s_1, \underbrace{s_2, s_u, s_2}, s_1, s_3)\\
&\qquad\longleftrightarrow (s_3, \underbrace{s_1, s_2, s_1}, s_u, s_2, s_1, s_3)\longleftrightarrow (\underbrace{s_1, s_3}, s_2, s_1, s_u, s_2, s_1, s_3)\\
&\qquad\longleftrightarrow (s_1, s_3, s_2, \underbrace{s_u, s_1}, s_2, s_1, s_3)\longleftrightarrow (s_1, s_3, s_2, s_u, \underbrace{s_2, s_1, s_2}, s_3)\\
&\qquad\longleftrightarrow (s_1, s_3, s_2, s_u, s_2, s_1, \underbrace{s_3, s_2})\longleftrightarrow (s_1, s_3, s_2, s_u, s_2, \underbrace{s_3, s_1}, s_2)\\
&\qquad\longleftrightarrow (s_1, s_3, s_2, s_u, s_2, s_3, \underbrace{s_2, s_1})\longleftrightarrow (s_1, s_3, s_2, s_u, \underbrace{s_3, s_2, s_3}, s_1)\\
&\qquad\longleftrightarrow (s_1, s_3, s_2, \underbrace{s_3, s_u}, s_2, s_3, s_1)\longleftrightarrow (s_1, \underbrace{s_2, s_3, s_2}, s_u, s_2, s_3, s_1)\\
&\qquad\longleftrightarrow (s_1, s_2, s_3, \underbrace{s_u, s_2, s_u}, s_3, s_1)\longleftrightarrow (s_1, s_2, \underbrace{s_u, s_3}, s_2, s_u,
\underbrace{s_1, s_3})\\
&\qquad\longleftrightarrow (s_1, s_2, s_u, s_3, s_2, \underbrace{s_1, s_u},
s_3)\longleftrightarrow (s_{j_0},s_{j_1},\cdots,s_{j_k}),
\end{aligned}
$$
as required, where the fourth  ``$\longleftrightarrow$" follows from the extra transformation 7) in Definition \ref{braid1}.

Suppose that we are in Case i) of Lemma \ref{67a}. Without loss of generality, we assume that $i_0=2$ and $j_0=u$. Then we have that $$\begin{aligned}
&\quad\,(s_{i_0},s_{i_1},\cdots,s_{i_k})\longleftrightarrow (s_2,s_u,b)\longleftrightarrow (s_2, s_u, \underbrace{s_1, s_3, s_2, s_u, s_1, s_3})\\
&\qquad\longleftrightarrow (s_2, \underbrace{s_3, s_u, s_1}, s_2, s_u, s_1, s_3)\longleftrightarrow (\underbrace{s_3, s_2, s_u, s_1, s_2, s_u, s_1, s_3})\\
&\qquad\longleftrightarrow (s_3, s_2, s_u, s_1, s_2, \underbrace{s_1, s_u}, s_3)\longleftrightarrow (s_3, s_2, s_u, \underbrace{s_2, s_1, s_2}, s_u, s_3)\\
&\qquad\longleftrightarrow  (s_3, \underbrace{s_u, s_2, s_u}, s_1, s_2, s_u, s_3)\longleftrightarrow(\underbrace{s_u, s_3}, s_2, s_u, s_1, s_2, s_u, s_3)\\
&\qquad\longleftrightarrow (s_u, s_3, s_2, \underbrace{s_1,s_u}, s_2, s_u, s_3)\longleftrightarrow (s_u, s_3, s_2, s_1,\underbrace{s_2, s_u, s_2}, s_3)\\
&\qquad\longleftrightarrow (s_u, s_3, s_2, s_1, s_2, s_u, \underbrace{s_3, s_2})\longleftrightarrow (s_u, s_3, s_2, s_1, s_2, \underbrace{s_3, s_u}, s_2)\\
&\qquad\longleftrightarrow (s_u, s_3, s_2, s_1, s_2, s_3, \underbrace{s_2, s_u})\longleftrightarrow (s_u, s_3, s_2, s_1, \underbrace{s_3, s_2, s_3}, s_u)\\
&\qquad\longleftrightarrow (s_u, s_3, s_2, \underbrace{s_3, s_1}, s_2, s_u, s_3)\longleftrightarrow (s_u, \underbrace{s_2, s_3, s_2}, s_1, s_2, s_u, s_3)\\
&\qquad\longleftrightarrow (s_u, s_2, s_3, \underbrace{s_1, s_2, s_1}, s_u, s_3)\longleftrightarrow (s_u, s_2, \underbrace{s_1, s_3}, s_2, \underbrace{s_u, s_1}, s_3)\\
&\qquad\longleftrightarrow (s_{j_0},s_{j_1},\cdots,s_{j_k}), \end{aligned}
$$
as required, where the fourth  ``$\longleftrightarrow$" follows from the extra transformation 7) in Definition \ref{braid1}..

Suppose that we are in Case j) of Lemma \ref{67a}. Without loss of generality, we assume that $i_0=2$ and $j_0=3$. Then we have that
$k=11$, and $$\begin{aligned}
&\quad\,\,(i_0,i_1,\dots,i_{11})\longleftrightarrow (s_2, s_3, b)\longleftrightarrow (s_2, s_3, \underbrace{s_u, s_1, s_2, s_4, s_3, s_u, s_2, s_u, s_1, s_4})\\
&\longleftrightarrow
(s_2, s_3, \underbrace{s_4, s_u, s_1, s_2}, s_3, \underbrace{s_2, s_u, s_2}, s_1, s_4)\longleftrightarrow
(s_2, s_3, s_4, s_u, s_1, \underbrace{s_3, s_2, s_3}, s_u, s_2, s_1, s_4)\\
&\longleftrightarrow
(s_2, s_3, s_4, \underbrace{s_3, s_u, s_1}, s_2, s_3, s_u, s_2, s_1, s_4)\longleftrightarrow
(s_2, \underbrace{s_4, s_3, s_4}, s_u, s_1, s_2, s_3, s_u, s_2, s_1, s_4)\\
&\longleftrightarrow
(\underbrace{s_4, s_2}, s_3, s_4, s_u, s_1, s_2, s_3, s_u, s_2, s_1, s_4) .
\end{aligned}
$$
On the other hand, using the definition of the braid $I_\ast$-transformations, we can get that $$\begin{aligned}
&\quad\,(j_0,j_1,\dots,j_{11})\longleftrightarrow (s_3, s_2, b)\longleftrightarrow (s_3, s_2, \underbrace{s_u, s_1, s_2, s_4, s_3, s_u, s_2, s_u, s_1, s_4})\\
&\longleftrightarrow (s_3, s_2, \underbrace{s_1, s_u,} s_2, s_4, s_3, \underbrace{s_4, s_u, s_2, s_u, s_1})\longleftrightarrow
(s_3, s_2, s_1, s_u, s_2, \underbrace{s_3, s_4, s_3}, s_u, s_2, s_u, s_1)\\
&\longleftrightarrow
(s_3, s_2, s_1, s_u, s_2, \underbrace{s_u, s_3, s_4, s_3}, s_2, s_u, s_1)\longleftrightarrow
(s_3, s_2, s_1, \underbrace{s_2, s_u, s_2}, s_3, s_4, s_3, s_2, s_u, s_1)\\
&\longleftrightarrow
(s_3, \underbrace{s_1, s_2, s_1}, s_u, s_2, s_3, s_4, s_3, s_2, s_u, s_1)\longleftrightarrow
(\underbrace{s_1, s_3}, s_2, s_1, s_u, s_2, s_3, s_4, s_3, s_2, s_u, s_1)\\
&\longleftrightarrow
(s_1, s_3, s_2, s_1, s_u, s_2, \underbrace{s_4, s_3, s_4}, s_2, s_u, s_1)\longleftrightarrow
(s_1, s_3, \underbrace{s_4, s_2, s_1, s_u, s_2}, s_3, s_4, s_2, s_u, s_1)\\
&\longleftrightarrow
(s_1, s_3, s_4, s_2, s_1, s_u, s_2, s_3, \underbrace{s_2, s_4}, s_u, s_1)\longleftrightarrow
(s_1, s_3, \underbrace{s_2, s_4}, s_1, s_u, \underbrace{s_3, s_2, s_3}, \underbrace{s_u, s_1, s_4})\\
&\longleftrightarrow
(s_1, s_3, s_2, s_4, \underbrace{s_3, s_1, s_u}, s_2, \underbrace{s_u, s_1, s_3}, s_4)\longleftrightarrow
(s_1, s_3, s_2, s_4, s_3, s_1, s_u, s_2, s_u, s_1, \underbrace{s_4, s_3})\\
&\longleftrightarrow
(s_1, s_3, s_2, s_4, s_3, \underbrace{s_4, s_1, s_u, s_2, s_u, s_1}, s_3)\longleftrightarrow
(s_1, s_3, s_2, \underbrace{s_3, s_4, s_3}, s_1, s_u, s_2, s_u, s_1, s_3)\\
&\longleftrightarrow
(s_1, \underbrace{s_2, s_3, s_2}, s_4, s_3, s_1, s_u, s_2, s_u, s_1, s_3)\longleftrightarrow
(s_1, s_2, s_3, \underbrace{s_4, s_2}, s_3, \underbrace{s_u, s_1}, s_2, s_u, s_1, s_3)\\
&\longleftrightarrow
(s_1, s_2, s_3, s_4, \underbrace{s_3, s_2, s_u, s_1, s_2, s_u, s_1, s_3})\longleftrightarrow
(s_1, s_2, \underbrace{s_4, s_3, s_4}, s_2, s_u, s_1, s_2, s_u, s_1, s_3)\\
&\longleftrightarrow
(\underbrace{s_4, s_1, s_2}, s_3, s_4, s_2, s_u, s_1, s_2, s_u, s_1, s_3) .
\end{aligned}
$$
So again we are in a position to apply the induction hypothesis. This completes the proof of (\ref{goal}) when $m=3$. As a result, we can make the following useful
observation:

\smallskip
{\it Observation 1.} If there exists some $s_\alpha, s_\beta\in S$ such that $s_\alpha s_\beta$ has order $1$ or $3$ and $(i_0,i_1,\dots,i_k)
\longleftrightarrow
(s_\alpha,\dots)$, $(j_0,j_1,\dots,j_k)\longleftrightarrow (s_\beta,\dots)$, then by the result we have obtained,we can deduce that $(i_0,i_1,\dots,i_k)\longleftrightarrow
(j_0,j_1,\dots,j_k)$.
\smallskip

Henceforth we assume that $m=2$. That is, $st=ts$. By Lemma \ref{square}, $\rho(s_{j_0}\ltimes w)=\rho(s_{j_1}\ltimes s_{j_2}\ltimes\cdots\ltimes s_{j_k})=k<k+1$.
It follows from Lemma \ref{rankfunc} that $\ell(s_{j_0}w)=\ell(w)-1$.
Equivalently, $$
\ell(s_{j_0}(s_{i_0}\ltimes s_{i_1}\ltimes s_{i_2}\ltimes\cdots\ltimes s_{i_k}))=\ell(s_{i_0}\ltimes s_{i_1}\ltimes s_{i_2}\ltimes\cdots\ltimes s_{i_k})-1 .
$$
Applying  Lemma \ref{rankfunc} again, we can deduce that $$
\rho(s_{j_0}\ltimes(s_{i_0}\ltimes s_{i_1}\ltimes s_{i_2}\ltimes\cdots\ltimes s_{i_k}))=k .
$$
Applying Proposition \ref{exchange}, we get that $$
s_{j_0}\ltimes(s_{i_0}\ltimes s_{i_1}\ltimes s_{i_2}\ltimes\cdots\ltimes s_{i_k})=s_{i_0}\ltimes s_{i_1}\ltimes s_{i_2}\ltimes\cdots
\ltimes s_{i_{a-1}}\ltimes s_{i_{a+1}}\ltimes\cdots\ltimes s_{i_k}
$$
for some $0\leq a\leq k$. In particular, $s_{i_0}\ltimes s_{i_1}\ltimes s_{i_2}\ltimes\cdots
\ltimes s_{i_{a-1}}\ltimes s_{i_{a+1}}\ltimes\cdots\ltimes s_{i_k}=s_{j_1}\ltimes\cdots\ltimes s_{j_k}$.

Since $$
s_{j_0}\ltimes s_{i_0}\ltimes s_{i_1}\ltimes s_{i_2}\ltimes\cdots
\ltimes s_{i_{a-1}}\ltimes s_{i_{a+1}}\ltimes\cdots\ltimes s_{i_k}=s_{i_0}\ltimes s_{i_1}\ltimes s_{i_2}\ltimes\cdots\ltimes s_{i_k},
$$
it is clear that $(j_0,i_0,i_1,i_2,\cdots,i_{a-1},i_{a+1},\cdots,i_k)$ is a reduced $I_\ast$-expression for $w$.

It remains to show that \begin{equation}\label{goal2} (j_0,i_0,i_1,i_2,\cdots,i_{a-1},i_{a+1},\cdots,i_k)\longleftrightarrow
(i_0,i_1,i_2,\cdots,i_k).\end{equation}
In fact, by induction hypothesis, $$(i_0,i_1,i_2,\cdots,i_{a-1},i_{a+1},\cdots,i_k)\longleftrightarrow (j_1,j_2,\cdots,j_k)$$ because $s_{i_0}\ltimes
s_{i_1}\ltimes s_{i_2}\ltimes\cdots \ltimes s_{i_{a-1}}\ltimes s_{i_{a+1}}\ltimes\cdots\ltimes s_{i_k}=s_{j_1}\ltimes\cdots\ltimes s_{j_k}$.
Once (\ref{goal2}) is proved, we can deduce that $(j_0,i_0,i_1,i_2,\cdots,i_{a-1},i_{a+1},\cdots,i_k)\longleftrightarrow (j_0,j_1,j_2,\cdots,j_k)$. Composing these
transformations, we prove (\ref{goal}).

If $a>0$, then as $st=ts$, $$
(j_0,i_0,i_1,i_2,\cdots,i_{a-1},i_{a+1},\cdots,i_k)\longleftrightarrow(i_0,j_0,i_1,i_2,\cdots,i_{a-1},i_{a+1},\cdots,i_k),
$$
and hence (\ref{goal2}) follows from Observation 1.

It remains to consider the case when $a=0$. In this case, (\ref{goal2}) becomes \begin{equation}\label{goal3} (j_0,i_1,i_2,\cdots,i_k)\longleftrightarrow
(i_0,i_1,i_2,\cdots,i_k),\end{equation}
We set $w_1:=s_{i_1}\ltimes s_{i_2}\ltimes\dots\ltimes s_{i_k}$. Then $w_1\neq 1$. There are two possibilities:

\smallskip
{\it Case 1.} $(s,t)\in\{(s_{i_0},s_{j_0}),(s_{j_0},s_{i_0})|1\leq i_0<j_0-1<n-1\}$. Without loss of generality, we assume that
$(s,t)=(s_{i_0},s_{j_0})$, where $i_0<j_0-1<n-1$.

Note that $i_1\not\in\{i_0,j_0\}$ because both $(i_0,i_1,\cdots)$ and $(j_0,i_1,\cdots)$ are reduced $I_\ast$-sequences. We can assume that either $|i_1-i_0|=1$ or $|i_1-j_0|=1$ because otherwise $$
(j_0,i_1,\cdots)\longleftrightarrow(i_1,j_0,\cdots) \longleftrightarrow(i_1,i_0,\cdots)\longleftrightarrow(i_0,i_1,\cdots).
$$
Without loss of generality we assume that $|i_1-i_0|=1$. Suppose that $j_0-i_0>2$. Then we must have that $|i_1-j_0|>1$. It follows that $$
(j_0,i_1,\cdots)\longleftrightarrow(i_1,j_0,\cdots) \longleftrightarrow(i_0,i_1,\cdots),
$$
where the second ``$\longleftrightarrow$" follows from Observation 1. Therefore, it suffices to consider the case when $j_0=i_0+2$. Furthermore, by a similar
argument,
we can consider only the subcase when $i_1=i_0+1$ and hence $w_1=s_{i_0+1}\ltimes w_2$ with $\rho(w_1)=\rho(w_2)+1$. Since $s_{i_0}\ltimes s_{i_0+1}\neq s_{i_0+2}\ltimes s_{i_0+1}$, it follows that $w_2\neq 1$.

Suppose that $i_0\geq 2$. If $i_2=i_0-1$ or $i_2=u$ and $i_0=2$, then $$\begin{aligned}
(j_0,i_1,i_2,\cdots)&=(i_0+2,i_0+1,i_2,\cdots)\longleftrightarrow (i_2,i_0+2,i_0+1,\cdots)\\
&\qquad  \longleftrightarrow (i_0,i_1,i_2,\cdots),
\end{aligned}$$
where the last ``$\longleftrightarrow$" follows from Observations 1. If $i_2<i_0-1$ or $i_2>i_0+3$, or $i_2=u$ and $i_0>2$, then $$\begin{aligned}
(j_0,i_1,i_2,\cdots)&=(i_0+2,i_0+1,i_2,\cdots)\longleftrightarrow (i_2,i_0+2,i_0+1,\cdots) \\
&\qquad\longleftrightarrow (i_2,i_0,i_1,\cdots)\longleftrightarrow
(i_0,i_1,i_2,\cdots),
\end{aligned}$$

Similarly, if $i_2=i_0+3$  then $$\begin{aligned}
(i_0,i_1,i_2,\cdots)&=(i_0,i_0+1,i_2,\cdots)\longleftrightarrow(i_2,i_0,i_0+1,\cdots)=(i_0+3,i_0,i_0+1,\cdots)\\
&\qquad \longleftrightarrow (i_0+2,i_1,i_2,\cdots)=(j_0,i_1,i_2,\cdots),
\end{aligned}$$
where the last ``$\longleftrightarrow$" follows from Observations 1. Since $(i_0,i_0+1,i_2,\cdots)$ is a reduced $I_\ast$-sequence, it is clear that $i_2\neq i_0+1$. It
remains to consider the case when $i_2\in\{i_0,i_0+2\}$. If $i_2=i_0$, then $$\begin{aligned}
(i_0,i_1,i_2,\cdots)&=(i_0,i_0+1,i_0,\cdots)\longleftrightarrow(i_0+1,i_0,i_0+1,\cdots)\\
&\qquad\longleftrightarrow (i_0+2,i_1,i_2,\cdots)=(j_0,i_1,i_2,\cdots).
\end{aligned}
$$
Similarly, if $i_2=i_0+2$, then $$\begin{aligned}
(j_0,i_1,i_2,\cdots)&=(i_0+2,i_0+1,i_0+2,\cdots)\longleftrightarrow (i_0+1,i_0+2,i_0+1,\cdots)\\
&\qquad  \longleftrightarrow (i_0,i_1,i_2,\cdots)=(i_0,i_1,i_2,\cdots).
\end{aligned}$$

Therefore, we can consider only the situation when $i_0=1$. By similar reasoning as before, we can assume that $i_2=u$, hence $w_2=s_{u}\ltimes w_3$ with
$\rho(w_2)=\rho(w_3)+1$. In particular, $w_3(u)>0$. Since $s_1\ltimes s_{2}\ltimes s_u\neq s_3\ltimes s_2\ltimes s_u$, it follows that $w_3\neq 1$.

Now $w_3^{-1}(\alpha)=w_3(\alpha)<0$, for $\alpha\in\Delta$, only if $\alpha\in\{\alpha_i|i\geq 1\}$. By similar reasoning as before, we can consider only the case
when $s_3$ is a descent of $w_3$. Hence $w_3=s_{3}\ltimes w_4$ with $\rho(w_3)=\rho(w_4)+1$. However, in this case, $$
(j_0, i_1, i_2, \cdots)\longleftrightarrow(3,2,u,3,w_4)\longleftrightarrow (3,2,\underbrace{3,u},w_4)\longleftrightarrow (\underbrace{2,3,2},u,w_4) .
$$
Applying Observation 1 again, we see that $$
(2,3,2,u,w_4)\longleftrightarrow (1,2,u,3,w_4)\longleftrightarrow(i_0,i_1,i_2,\cdots). $$ As a result, (\ref{goal2}) follows at once.

\smallskip
{\it Case 2.} $(s,t)\in\{(s_i,s_u),(s_u,s_{i})|1\leq i<n, i\neq 2\}$.

If $i\geq 3$, then (\ref{goal2}) can be proved by applying the automorphism $\tau$ or using in the same argument as in Case 1. It remains to consider the case when
$i=1$. Without loss of generality, we assume that $(s,t)=(s_{i_0},s_{j_0})=(s_1,s_u)$.

Note that $s_t$ is not a descent of $w_1$ for $s_t\in\{s_1,s_u\}$ because both $(s_{i_0},s_{i_1},s_{i_2},\cdots)$ and $(s_{j_0},s_{i_1},s_{i_2},\cdots)$ are reduced.
If $s_t$ is a descent of $w_1$ for some $t>2$. That is, $w_1(\alpha_t)<0$ and hence $w_1=s_{t}\ltimes w_2$ with $\rho(w_1)=\rho(w_2)+1$. Then $$\begin{aligned}
&\quad\,(s_{i_0},s_{i_1},\cdots)\longleftrightarrow (s_1,w_1)\longleftrightarrow (s_1,s_t,w_2)\longleftrightarrow (s_t,s_1,w_2)\longleftrightarrow
(s_t,s_u,w_2)\\
&\longleftrightarrow
(s_u,s_t,w_2)\longleftrightarrow (s_u,w_1)\longleftrightarrow (s_{j_0},s_{i_1},s_{i_2},\cdots),
\end{aligned}$$
and we are done. Therefore, it suffices to consider the case when $s_2$ is a descent of $w_1$. That is, $w_1(\alpha_2)<0$ and hence
$w_1=s_{2}\ltimes w_2$ with $\rho(w_1)=\rho(w_2)+1$. Note that $s_{1}\ltimes s_2\neq s_u\ltimes s_2$. It follows that $w_2\neq 1$.

If $s_t$ is a descent of $w_2$ for some $t>3$. That is, $w_2(\alpha_t)<0$ and hence $w_2=s_{t}\ltimes w_3$ with $\rho(w_2)=\rho(w_3)+1$. Then $$\begin{aligned}
&\quad\,(s_{i_0},s_{i_1},s_{i_2},\cdots)\longleftrightarrow (s_1,w_1)\longleftrightarrow (s_1,s_2,w_2)\longleftrightarrow (s_1,s_2,s_t,w_3)\longleftrightarrow
(s_t,s_1,s_2,w_3)\\
&\longleftrightarrow
(s_t,s_u,s_2,w_3)\longleftrightarrow (s_u,s_2,s_t,w_3)\longleftrightarrow (s_u,s_2,w_2)\longleftrightarrow (s_u,w_1)\longleftrightarrow (s_{j_0},s_{i_1},s_{i_2},\cdots),
\end{aligned}$$
and we are done.

If $s_u$ is a descent of $w_2$. That is, $w_2(u)<0$ and hence $w_2=s_{u}\ltimes w_3$ with $\rho(w_2)=\rho(w_3)+1$. Then applying Observation 1 (in the fourth
``$\longleftrightarrow$"), $$\begin{aligned}
&\quad\,(s_{j_0},s_{i_1},s_{i_2},\cdots)\longleftrightarrow (s_u,s_2,w_2)\longleftrightarrow (s_u,s_2,s_u,w_3)\longleftrightarrow(s_2,s_u,s_2,w_3)\\
&\longleftrightarrow
(s_1,s_2,s_u,w_3)\longleftrightarrow (s_1,s_2,w_2)\longleftrightarrow (s_1,w_1)\longleftrightarrow (s_{i_0},s_{i_1},s_{i_2},\cdots),
\end{aligned}$$
and we are done. The same argument applies to the case when $s_1$ is a descent of $w_3$. Note that $s_2$ is not a descent of $w_2$ because
$(s_{i_0},s_{i_1},s_{i_2},\cdots)$ is reduced. Therefore, it suffices to consider the case when $s_3$ is a descent of $w_2$. That is, $w_2(\alpha_3)<0$ and hence
$w_2=s_{3}\ltimes w_3$ with $\rho(w_2)=\rho(w_3)+1$.  Note that $s_{1}\ltimes s_2\ltimes s_3\neq s_u\ltimes s_2\ltimes s_3$. It follows that $w_3\neq 1$.

Repeating this argument, we shall finally get that $$
w_1=s_2\ltimes s_3\ltimes s_4\ltimes\dots\ltimes s_{n-1}\ltimes w_{n-1},
$$
such that $\rho(w_1)=\rho(w_{n-1})+n-2$. By direct calculation, we can check that $$
s_1\ltimes s_2\ltimes s_3\ltimes s_4\ltimes\dots\ltimes s_{n-1}\neq s_u\ltimes s_2\ltimes s_3\ltimes s_4\ltimes\dots\ltimes s_{n-1}.
$$
It follows that $w_{n-1}\neq 1$. Therefore, $\{\alpha\in\Delta|w_{n-1}(\alpha)<0\}\neq\emptyset$. If $w_{n-1}(\alpha_j)<0$ for some $2\leq j<n-1$, then
$w_{n-1}=s_j\ltimes w_n$ with $\rho(w_{n-1})=\rho(w_n)+1$. Hence $$\begin{aligned}
&(s_{i_0},s_{i_1},s_{i_2},\cdots)\longleftrightarrow (s_1, s_2,\cdots, s_{n-1}, s_j, w_n)\\
&\quad \longleftrightarrow (s_1, s_2,\dots, s_{j}, s_{j+1}, \underbrace{s_j, s_{j+2},\dots, s_{n-1}}, w_n)\\
&\quad \longleftrightarrow (s_1, s_2,\dots, s_{j-1},\underbrace{s_{j+1}, s_j, s_{j+1}}, s_{j+2},\dots, s_{n-1}, w_n)\\
&\quad \longleftrightarrow (\underbrace{s_{j+1}, s_1, s_2,\dots, s_{j-1}}, s_j, s_{j+1}, s_{j+2},\dots, s_{n-1}, w_n)\\
&\quad \longleftrightarrow (s_{j+1}, \underbrace{s_u, s_2,\dots, s_{j-1}, s_j, s_{j+1}, s_{j+2},\dots, s_{n-1}, w_n})\\
&\quad \longleftrightarrow (\underbrace{s_u, s_2,\dots, s_{j-1}, s_{j+1}}, s_j, s_{j+1}, s_{j+2},\dots, s_{n-1}, w_n)\\
&\quad \longleftrightarrow (s_u, s_2,\dots, s_{j-1}, \underbrace{s_j, s_{j+1}, s_{j}}, s_{j+2},\dots, s_{n-1}, w_n)\\
&\quad \longleftrightarrow (s_u, s_2,\dots, s_{j-1}, s_j, \underbrace{s_{j+1}, s_{j+2},\dots, s_{n-1}, s_{j}}, w_n)\\
&\quad \longleftrightarrow (s_{j_0},s_{i_1},s_{i_2},\cdots),
\end{aligned}$$
as required. Using a similar argument together with Observation 1 one can prove (\ref{goal2}) when $w_{n-1}(\alpha_1)<0$ or $w_{n-1}(u)<0$. This completes the proof of the theorem.
\end{proof}

\bigskip
\section{Weyl groups of type $B_n$}

In this section we study the braid $I_\ast$-transformations between reduced $I_\ast$-expressions of involutions in the Weyl group $W(B_n)$ of type $B_n$.
We shall identify in Definition \ref{braid1B} a finite set of basic braid $I_\ast$-transformations which span and preserve the sets of reduced $I_\ast$-expressions
for any involution in $W(B_n)$ for all $n$ simultaneously, and show in Theorem \ref{mainthm0B} that any two reduced $I_\ast$-expressions for an involution in
$W(B_n)$ can be transformed into each other through a series of basic braid $I_\ast$-transformations.

Let $W(B_n)$ be the Weyl group of type $B_n$. It is generated by the simple reflections $\{s_0,s_1,\cdots,s_{n-1}\}$ which satisfy the following relations:            $$
\begin{aligned}
&s_i^2=1,\,\,\,\, for\,\,0\leq i\leq n-1,\\
&s_0 s_1 s_0 s_1=s_1 s_0 s_1 s_0,\\
&s_i s_{i+1} s_i= s_{i+1} s_i s_{i+1},\,\,\,\, for\,\,1\leq i\leq n-2,\\
&s_is_j=s_js_i,\,\,\,\,for\,\,0\leq i<j-1\leq n-2.
\end{aligned}
$$

Alternatively, $W(B_n)$ can be realized as the subgroup of the permutations on the set $\{1,-1,2,-2,\cdots,n,-n\}$ (cf. \cite{BB}) such that: \begin{equation}
\label{sign0B}
\begin{matrix} \text{$\sigma(i)=j$ if and only if $\sigma(-i)=-j$ for any $i,j$.}\end{matrix}
\end{equation}
In particular, under this identification, we have that $$
s_0=(1,-1),\quad\,\, s_i=(i,i+1)(-i,-i-1),\,\,\,for\,\,1\leq i<n .
$$

Let $\eps_1,\cdots,\eps_n$ be the standard basis of $\mathbb{R}^n$. We set $\alpha_0:=\eps_1$, $\alpha_i:=\eps_{i+1}-\eps_{i}$ for each $1\leq i<n$. For each $1\leq i\leq n$, we define $\eps_{-i}:=-\eps_i$. Then $W$ acts on the set
$\{\eps_i|i=-n,\cdots,-2,-1,1,2,\cdots,n\}$ via $\sigma(\eps_i):=\eps_{\sigma(i)}$. Let $$
\Phi:=\{\pm\eps_i\pm\eps_j|1\leq i<j\leq n\}\cup\{\pm\eps_i|1\leq i\leq n\},\,\,\,E:=\text{$\mathbb{R}$-Span$\{v|v\in\Phi\}$}.
$$
Then $\Phi$ is the root system of type $B_n$ in $E$ with $W(B_n)$ being its Weyl group. We choose $\Delta:=\{\alpha_{i}|0\leq i<n\}$ to be the set of the simple
roots. Then $
\Phi^{+}=\{\eps_j\pm\eps_i,|1\leq i<j\leq n\}\cup \{\eps_i|1\leq i\leq n\}$ is the set of positive roots. For any $0\neq \alpha\in E$, we write $\alpha>0$ if
$\alpha=\sum_{\beta\in\Delta}k_{\beta}\beta$ with $k_{\beta}\geq 0$ for each $\beta$.

For any $w\in W(B_n)$ and $\alpha\in\Delta$, it is well-known that \begin{equation}
\label{reflection0B}
ws_{\alpha}w^{-1}=s_{w(\alpha)},
\end{equation}
where $s_{w(\alpha)}$ is the reflection with respect to hyperplane which is orthogonal to $w(\alpha)$.

\begin{lem} \label{length1B} Let $w\in W(B_n)$ and $1\leq i<n$. Then

1) $ws_i<w$ if and only if $w(\eps_{i+1}-\eps_{i})<0$;

2) $ws_0<w$ if and only if $w(\eps_1)<0$.

\end{lem}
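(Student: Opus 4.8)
The plan is to deduce both statements from the standard principle, valid in any finite Weyl group, that right multiplication by a simple reflection changes length by $\pm 1$ according to the sign of the corresponding simple root. Concretely, I would invoke the fact that for $w\in W(B_n)$ and a simple root $\gamma\in\Delta$ one has $\ell(ws_\gamma)=\ell(w)+1$ if $w(\gamma)>0$ and $\ell(ws_\gamma)=\ell(w)-1$ if $w(\gamma)<0$. Since $w$ and $ws_\gamma$ differ by a single simple reflection on the right, they are automatically comparable in the Bruhat order, so $ws_\gamma<w$ is equivalent to $\ell(ws_\gamma)<\ell(w)$, i.e.\ to $w(\gamma)<0$. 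Granting this principle, part 1) is the special case $\gamma=\alpha_i=\eps_{i+1}-\eps_i$ and part 2) is the special case $\gamma=\alpha_0=\eps_1$, using the linearity of the $W$-action to write $w(\alpha_i)=w(\eps_{i+1})-w(\eps_i)$ and $w(\alpha_0)=w(\eps_1)$.

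To keep the argument self-contained, I would establish the length principle through the inversion set $N(w):=\{\beta\in\Phi^+\mid w(\beta)<0\}$, whose cardinality equals $\ell(w)$. A simple reflection $s_\gamma$ sends $\gamma$ to $-\gamma$ and permutes $\Phi^+\setminus\{\gamma\}$; hence the bijection $\beta\mapsto s_\gamma(\beta)$ of $\Phi^+\setminus\{\gamma\}$ identifies $N(ws_\gamma)$ with $N(w)$ away from $\gamma$ itself, while $\gamma$ lies in $N(ws_\gamma)$ precisely when $w(\gamma)>0$ and in $N(w)$ precisely when $w(\gamma)<0$. Comparing cardinalities then yields the claimed $\pm 1$ behaviour. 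Alternatively, I would verify the two claims directly in the signed-permutation realization described above, where $ws_i$ is obtained from $w$ by interchanging the images $w(i)$ and $w(i+1)$ (and correspondingly $w(-i),w(-i-1)$) and $ws_0$ is obtained by negating $w(1)$; translating the resulting inversion condition into $w(\eps_{i+1}-\eps_i)=\eps_{w(i+1)}-\eps_{w(i)}<0$ (resp.\ $w(\eps_1)=\eps_{w(1)}<0$) recovers the lemma, since for signed permutations the usual descent comparison $w(i)>w(i+1)$ is exactly the statement that $\eps_{w(i+1)}-\eps_{w(i)}$ is a negative root.

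The only genuinely delicate point is the bookkeeping in the inversion-set argument, namely checking that $\gamma$ is the unique root whose membership in the inversion set changes under right multiplication by $s_\gamma$; everything else is routine linearity and specialization. Because this is a classical fact about Weyl groups, in the final write-up I would most likely just cite it, exactly as is done for the type $D_n$ analogue in Lemma \ref{length1}, and then immediately specialize to $\gamma=\alpha_i$ and $\gamma=\alpha_0$ to obtain 1) and 2) respectively.
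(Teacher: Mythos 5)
Your proposal is correct. Note that the paper gives no proof of this lemma at all: both Lemma \ref{length1} (type $D_n$) and Lemma \ref{length1B} (type $B_n$) are recorded as standard facts, exactly matching your closing remark that in the final write-up you would simply cite the classical principle. The content you supply is the standard justification of that principle: the equivalence $ws_\gamma<w\iff\ell(ws_\gamma)<\ell(w)\iff w(\gamma)<0$ for a simple root $\gamma$, proved via the inversion set $N(w)=\{\beta\in\Phi^+\mid w(\beta)<0\}$ together with the fact that $s_\gamma$ permutes $\Phi^+\setminus\{\gamma\}$, and then the specializations $\gamma=\alpha_i=\eps_{i+1}-\eps_i$ and $\gamma=\alpha_0=\eps_1$. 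Your bookkeeping is right ($\gamma\in N(ws_\gamma)$ precisely when $w(\gamma)>0$, while $\gamma\in N(w)$ precisely when $w(\gamma)<0$, and $s_\gamma$ matches the remaining inversions bijectively), and the signed-permutation reformulation $w(\eps_{i+1}-\eps_i)<0\iff w(i)>w(i+1)$, resp.\ $w(\eps_1)<0\iff w(1)<0$, agrees with the paper's conventions for positive roots. There is no gap; the only implicit ingredient is the classical identity $\ell(w)=|N(w)|$, which is legitimately taken as known.
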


\begin{lem} \label{64bB} Let $W=W(B_n)$ be the Weyl group of type $B_n$. Let $w\in I_\ast$ be an involution, and let $s=s_{\alpha}$ and $t=s_{\beta}$ for some
$\alpha\neq\beta$ in $\Delta$ with $$(\alpha,\beta)\in\{(\alpha_0,\alpha_{1}),(\alpha_{1},\alpha_0)\}.
$$
Assume that $s,t$ are both descents of $w$ and let $b\in I_\ast$ be the unique minimal length representative of $W_K wW_K$ where $K:=\<s,t\>$. Assume further that
$b$ has no descents which commute with both $s$ and $t$. Then $m_{st}=4$ and $s_0b\neq bs_1$ (i.e., excluding Cases 4,6,7 in the notation of Lemma \ref{7cases}).
Moreover, it holds that $bs=sb$ and $bt=tb$ (Case 5 in the notation of Lemma \ref{7cases}) only if one of the following occurs: \begin{enumerate}
\item $b=1$;
\item $b=s_2\ltimes s_1\ltimes s_0$;
\item $b=s_2\ltimes s_1\ltimes s_0\ltimes s_1 \ltimes s_2 \ltimes d$, where $d\in I_\ast$ and $\rho(b)=\rho(d)+5$.
\end{enumerate}
\end{lem}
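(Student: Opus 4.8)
The plan is to follow the same template as the proofs of Lemma \ref{64a} and Lemma \ref{67a}, treating the exclusion statement and the Case~5 classification separately. Since $\{s,t\}=\{s_0,s_1\}$, the order $m_{st}=m_{s_0,s_1}=4$ is immediate from the braid relations of $W(B_n)$, and this already rules out Cases 4 and 6 of Lemma \ref{7cases}, both of which require $m_{st}$ odd. To eliminate Cases 6 and 7 it then remains to prove $s_0b\neq bs_1$, and here lies the one genuinely new point. If $s_0b=bs_1$ then, since $b=b^{-1}$, (\ref{reflection0B}) gives $s_{b(\alpha_0)}=s_{\alpha_1}$, hence $b(\alpha_0)=\pm\alpha_1$. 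But $b$ acts by a signed permutation, hence orthogonally, so it preserves the standard inner product; yet $\langle\alpha_0,\alpha_0\rangle=\langle\eps_1,\eps_1\rangle=1\neq 2=\langle\eps_2-\eps_1,\eps_2-\eps_1\rangle=\langle\alpha_1,\alpha_1\rangle$ because $\alpha_0$ is short while $\alpha_1$ is long. This contradiction (which has no analogue in types $A$ or $D$, where all roots have a single length) excludes Case 7 and completes the first assertion.

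Assuming now that we are in Case~5, I would reduce by symmetry to $(\alpha,\beta)=(\alpha_0,\alpha_1)$ and pin down $b$ on the first coordinates. From $bs=sb$, $bt=tb$ and $b=b^{-1}$, (\ref{reflection0B}) yields $b(\alpha_0)=\pm\alpha_0$ and $b(\alpha_1)=\pm\alpha_1$. Since $m_{st}=4$, the Case~5 expression of Lemma \ref{7cases} (with $m/2+1=3$ factors) reads $w=s\ltimes t\ltimes s\ltimes b$, so $(s,t,s,b)$ and, symmetrically, $(t,s,t,b)$ are reduced; taking suffixes and applying Corollary \ref{length0} and Corollary \ref{deletion2} exactly as in Lemma \ref{64a} forces $sb>b$ and $tb>b$, whence $b(\alpha_0)=\alpha_0$ and $b(\alpha_1)=\alpha_1$, i.e. $b(1)=1$ and $b(2)=2$. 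Finally, the hypothesis that $b$ has no descent commuting with both $s_0$ and $s_1$ says precisely that no $s_j$ with $j\geq 3$ is a descent of $b$; by Lemma \ref{length1B} (and $b=b^{-1}$) this means $b(3)<b(4)<\dots<b(n)$.

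What remains is a trichotomy. If $b=1$ we are in a). Otherwise, since $\{\,|b(3)|,\dots,|b(n)|\,\}=\{3,\dots,n\}$ and the values increase, we cannot have $b(3)>0$ (that would force $b(j)=j$ for all $j$), so $b(3)\leq -3$. If $b(3)=-3$, then each $b(j)$ with $j\geq 4$ satisfies $|b(j)|\geq 4$ and $b(j)>b(3)=-3$, forcing $b(j)=j$; hence $b=(3,-3)$, and a direct computation gives $(3,-3)=s_2\ltimes s_1\ltimes s_0$, which is b). If instead $b(3)\leq -4$, I would peel off $s_2,s_1,s_0,s_1,s_2$ in turn: using $b(\eps_1)=\eps_1$, $b(\eps_2)=\eps_2$ and $b(\eps_3)=\eps_{b(3)}$ with $b(3)\leq -4$, one checks successively (in the style of the displayed computations in Lemma \ref{64a}) that each of $\alpha_2$, then $\alpha_1$, then $\alpha_0$, then $\alpha_1$, then $\alpha_2$ is sent to a negative root by the current conjugate of $b$ and is not fixed up to sign, so by Corollary \ref{length0} each step is a rank-dropping conjugation. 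Setting $d:=s_2\ltimes s_1\ltimes s_0\ltimes s_1\ltimes s_2\ltimes b$ gives $\rho(b)=\rho(d)+5$, and Lemma \ref{square} then yields $b=s_2\ltimes s_1\ltimes s_0\ltimes s_1\ltimes s_2\ltimes d$, which is c).

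The main obstacle I anticipate is this Case~c) peeling: one must track the signed-permutation action of the successive conjugates $s_2bs_2$, $s_1s_2bs_2s_1,\dots,s_2s_1s_0s_1s_2\,b\,s_2s_1s_0s_1s_2$ on the relevant $\eps_i$ carefully enough to verify the descent and the non-commutativity condition at each of the five steps; everything there is routine but must be done without sign errors. Conceptually, all of this is a type-$B$ bookkeeping analogue of Lemma \ref{64a}, and the only essential new ingredient is the root-length argument that kills Case~7.
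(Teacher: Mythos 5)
Your proposal is correct and follows essentially the same route as the paper's own proof: parity of $m_{st}=4$ excludes Cases 4 and 6, the short/long root-length argument excludes Case 7, and in Case 5 the reflection formula plus Corollaries \ref{length0} and \ref{deletion2} pin down $b(1)=1$, $b(2)=2$, after which the trichotomy $b=1$, $b(3)=-3$, $b(3)\leq -4$ (with the five-step peeling of $s_2,s_1,s_0,s_1,s_2$) yields cases a), b), c) exactly as in the paper.
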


\begin{proof} Since $m=4$ is even, we see that Case 4 and Case 6 do not happen. The Case 7 happens if and only if $s_0 b=b s_1$, $s_1 b=b s_0$. Since
$\eps_2-\eps_1$ and $\eps_1$ are roots of different lengths, so $b(\eps_2-\eps_1)\neq \pm\eps_1$. Thus this case would not happen either.

It is clear that Case 5 happens only if $b(\alpha)=\pm\alpha, b(\beta)=\pm\beta$ (by \ref{reflection0B}). By the expression of $w$ given in Case 5 of Lemma
\ref{7cases}, both $(s_0, s_1, s_0, b)$ and $(s_1, s_0, s_1, b)$ are reduced $I_\ast$-sequences. Applying Corollary \ref{length0} and Corollary \ref{deletion2},
we can deduce that $b(\alpha)>0<b(\beta)$. It follows that $b(\alpha_0)=\alpha_0, b(\alpha_1)=\alpha_1$. So we have that $b(1)=1$, $b(2)=2$.

Suppose that a) does not happen, i.e, $b\neq 1$. By assumption, any $s_t$ with $t\geq 3$ is not a descent of $b$. It follows from Lemma \ref{length1B} that
$b(3)<b(4)<\dots<b(n)$. If $b(3)>0$ then $b(3)\geq 3$ (because $b(1)=1$ and $b(2)=2$) and it follows that $b(k)=k$ for any $k$, a contradiction. Therefore, we can assume that $b(3)<0$ and hence
$b(3)\leq -3$ by (\ref{sign0B}).

Suppose that $b(3)=-3$. Then we must have that $b(k)=k$ for any $k\geq 4$. We can deduce that  $$b=s_2s_1s_0s_1s_2= s_2\ltimes s_1\ltimes s_0,$$
which is b) as required.

It suffices to consider the case when $b(3)<-3$. In this case, $$
b^{-1}(\alpha_{2})=b(\alpha_{2})=b(\eps_3-\eps_{2})=b(\eps_{3})-\eps_{2}<0,
$$
and $b(\alpha_{2})\neq\pm\alpha_{2}$, which implies that $bs_{2}\neq s_{2}b$. By Corollary \ref{length0}, we get that $s_{2}\ltimes b=s_{2}bs_{2}$ and
$\rho(b)=\rho(s_{2}\ltimes b)+1$.

In a similar way, we have that
$$
\pm\alpha_{1}\neq(s_{2}bs_{2})(\alpha_1)=(s_{2}bs_{2})(\eps_{2}-\eps_1)
=s_{2}b(\eps_{3})-\eps_{1}<0,
$$
$$
\pm\alpha_{0}\neq(s_1s_{2}bs_{2}s_1)(\alpha_0)=(s_1s_{2}bs_{2}s_1)
(\eps_1)=s_1s_{2}b(\eps_{3})<0,
$$
$$
\pm\alpha_{1}\neq(s_0s_1s_{2}bs_{2}s_1s_0)(\alpha_1)=\eps_{2}+s_0s_1s_{2}b(\eps_{3})<0,
$$
$$
\pm\alpha_{2}\neq(s_1s_0s_1s_{2}bs_{2}s_1s_0s_1)(\alpha_2)
=\eps_{3}+s_1s_0s_1s_2b(\eps_{3})<0.
$$
It follows from Corollary \ref{length0} that$$s_{2}\ltimes s_{1}\ltimes s_{0}\ltimes s_1\ltimes s_2\ltimes b=s_2s_1s_0s_1s_2bs_2s_1s_0s_1s_2\,\,\text{and}$$
$$
\rho(b)=\rho(s_{2}\ltimes s_{1}\ltimes s_{0}\ltimes s_1\ltimes s_2\ltimes b)+5.
$$
Set $d:=s_{2}\ltimes s_{1}\ltimes s_{0}\ltimes s_1\ltimes s_2\ltimes b$. Then $b=s_{2}\ltimes s_{1}\ltimes s_{0}\ltimes s_1\ltimes s_2\ltimes d$ and this is c)
as required. This completes the proof of the lemma.
\end{proof}

\begin{lem} \label{64aB} Let $W=W(B_n)$ be the Weyl group of type $B_n$. Let $w\in I_\ast$ be an involution, and let $s=s_{\alpha}$ and $t=s_{\beta}$ for some
$\alpha\neq\beta$ in $\Delta$ with $$(\alpha,\beta)\in\{(\alpha_i,\alpha_{i+1}),(\alpha_{i+1},\alpha_{i})|1\leq i<n-1\}.
$$
Assume that $s,t$ are both descents of $w$ and let $b\in I_\ast$ be the unique minimal length representative of $W_K wW_K$ where $K:=\<s,t\>$. Assume further that
$b$ has no descents which commute with both $s$ and $t$. Then $m_{st}=3$. Moreover, it holds that $bs=sb$ and $bt=tb$ (Case 4 in the notation of Lemma
\ref{7cases}) only if one of the following occurs: \begin{enumerate}
\item $b=1$;
\item $b=s_{i-1}\ltimes s_{i}\ltimes s_{i+1}\ltimes d$, where $2\leq i<n-1$, $d\in I_\ast$ and $\rho(b)=\rho(d)+3$;
\item $b=s_{i+2}\ltimes s_{i+1}\ltimes s_{i}\ltimes d$, where $1\leq i<n-2$, $d\in I_\ast$ and $\rho(b)=\rho(d)+3$.
\end{enumerate}
\end{lem}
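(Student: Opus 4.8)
The plan is to follow the template of the proof of Lemma~\ref{64a}, adapting the root computations to type $B_n$ via Lemma~\ref{length1B} and (\ref{reflection0B}), and to pinpoint the single new feature that produces case (b).

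First I would dispose of the assertion $m_{st}=3$: since $\alpha_i$ and $\alpha_{i+1}$ are adjacent simple roots lying in the $A_{n-1}$-part $\langle s_1,\dots,s_{n-1}\rangle$, the Coxeter relation $s_is_{i+1}s_i=s_{i+1}s_is_{i+1}$ gives $m_{s_i,s_{i+1}}=3$ directly. By the symmetry of the hypotheses in $s$ and $t$ I may assume $(\alpha,\beta)=(\alpha_i,\alpha_{i+1})$. Next I would reproduce the opening of Lemma~\ref{64a}: being in Case~4 means $bsb=s$ and $btb=t$, so by (\ref{reflection0B}) $b(\alpha_i)=\pm\alpha_i$ and $b(\alpha_{i+1})=\pm\alpha_{i+1}$; since Case~4 of Lemma~\ref{7cases} makes both $(t,s,b)$ and $(s,t,b)$ reduced $I_\ast$-sequences, Corollaries~\ref{length0} and~\ref{deletion2} force $b(\alpha_i)>0<b(\alpha_{i+1})$, hence $b(\alpha_i)=\alpha_i$ and $b(\alpha_{i+1})=\alpha_{i+1}$. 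Reading these equalities through $b(\eps_j)=\eps_{b(j)}$ yields $b(i)=i$, $b(i+1)=i+1$, $b(i+2)=i+2$.

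The heart of the argument is to list the possible descents of $b$. Because $b(\alpha_i)=\alpha_i>0$ and $b(\alpha_{i+1})=\alpha_{i+1}>0$, neither $s_i$ nor $s_{i+1}$ is a descent; by hypothesis no generator commuting with both $s_i$ and $s_{i+1}$ is a descent, and for $i\ge 2$ these are exactly $s_0,s_1,\dots,s_{i-2},s_{i+3},\dots,s_{n-1}$. Hence for $i\ge 2$ the only possible descents are $s_{i-1}$ and $s_{i+2}$; for $i=1$ the node $s_0=s_{i-1}$ is a priori allowed, but $b(\eps_1)=\eps_1>0$ shows it is not a descent, leaving only $s_{i+2}=s_3$. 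Consequently, if $b\neq 1$ (otherwise we are in case~(a)), then $b$ has a descent equal to $s_{i-1}$ (possible only when $i\ge 2$) or to $s_{i+2}$.

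I would finish by peeling. If $s_{i+2}$ is a descent then $b(\alpha_{i+2})=\eps_{b(i+3)}-\eps_{i+2}<0$ forces $b(i+3)<i$, and the three successive applications of Corollary~\ref{length0} that strip off $s_{i+2},s_{i+1},s_i$ are then the same signed-permutation computation as in Lemma~\ref{64a}, Case~1, giving $b=s_{i+2}\ltimes s_{i+1}\ltimes s_i\ltimes d$ with $\rho(b)=\rho(d)+3$; this is case~(c), and the requirement that $s_{i+2}$ be a generator is exactly $1\le i<n-2$. If instead $s_{i-1}$ is a descent (so $i\ge 2$), then $b(\alpha_{i-1})=\eps_i-\eps_{b(i-1)}<0$ forces $b(i-1)>i$, whence $b(i-1)\ge i+3$ since $i,i+1,i+2$ are already occupied by $b$; a short computation shows that after peeling $s_{i-1}$ the reflection $s_i$ becomes a descent of $s_{i-1}bs_{i-1}$ (its value on $\alpha_i$ is $\eps_{i+1}-\eps_{b(i-1)}<0$), and after peeling $s_i$ the reflection $s_{i+1}$ becomes a descent, so stripping $s_{i-1},s_i,s_{i+1}$ produces $b=s_{i-1}\ltimes s_i\ltimes s_{i+1}\ltimes d$ with $\rho(b)=\rho(d)+3$; this is case~(b), valid for $2\le i<n-1$. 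The routine part is the sign-checking at each peel via Lemma~\ref{length1B} and Corollary~\ref{length0}; the one genuinely new point compared with the type $D_n$ analysis of Lemma~\ref{64a} is that the weaker parity condition (\ref{sign0B}) of type $B_n$, lacking the evenness constraint of (\ref{sign0}), permits the left-incoming descent $s_{i-1}$, which in type $D_n$ was suppressed by the interaction of (\ref{sign0}) with the branch node $s_u$. I expect the main obstacle to be precisely the bookkeeping that isolates $s_{i-1}$ and $s_{i+2}$ as the sole candidate descents and matches the two peeling directions to the index ranges recorded in (b) and (c).
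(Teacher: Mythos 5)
Your proposal is correct and follows essentially the same route as the paper: the identical positivity argument (via (\ref{reflection0B}), Corollary \ref{length0} and Corollary \ref{deletion2}) pins down $b(i)=i$, $b(i+1)=i+1$, $b(i+2)=i+2$, and the identical three-step peeling produces cases (b) and (c); your only reorganization is to replace the paper's closing contradiction (that $b(i-1)\leq i-1$ and $b(i+3)\geq i+3$ force $b=1$) by the cleaner remark that a non-identity involution must have a descent, which the hypotheses confine to $\{s_{i-1},s_{i+2}\}$. One immaterial caveat: your closing diagnosis that type $D_n$ parity ``suppresses'' the $s_{i-1}$-descent is not accurate --- such a descent occurs in type $D_n$ as well (e.g.\ $b=s_3\ltimes s_4\ltimes s_5\ltimes s_6$ in $W(D_7)$ with $i=4$ satisfies all hypotheses of Lemma \ref{64a} and has $s_3=s_{i-1}$ as a descent); in both types the $s_{i-1}$-case is simply subsumed by the $s_{i+2}$-case, because $b(b(i-1))=i-1$ together with the increasing chain $b(i+3)<\cdots<b(n)$ forces $b(i+3)<i+2$ --- but this side remark plays no role in your argument.
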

\begin{proof} By assumption, we must have that $n\geq 4$. Suppose that a) does not happen, i.e., $b\neq 1$. By assumption, we have that $bsb=s$ and $btb=t$. It follows that $b(\alpha)=\pm\alpha, b(\beta)=\pm\beta$ (by (\ref{reflection0B})).
By the expression of $w$ given in Case 4, both $(t, s, b)$ and $(s, t, b)$ are reduced $I_\ast$-sequences. Applying Corollary \ref{length0} and Corollary
\ref{deletion2}, we can deduce that $b(\alpha)>0<b(\beta)$. It follows that $b(\alpha)=\alpha, b(\beta)=\beta$.

Without loss of generality, we can assume that $(\alpha,\beta)=(\alpha_i,\alpha_{i+1})$ for some $1\leq i<n-1$. Then $b(i)=i, b(i+1)=i+1, b(i+2)=i+2$.
By Lemma \ref{length1} and the assumption that $s_t$ is not a descent of $w$ for any $t<i-1$ or $t\geq i+3$, we can deduce that \begin{equation}\label{b01B}
b(1)<b(2)<\dots<b(i-1)\,\,\,\text{and}\,\,\,b(i+3)<b(i+4)<\dots<b(n)
\end{equation}
and $b\neq 1$.

Suppose that $i\geq 2$. If $b(i-1)>i-1$ then we can get that $b(i-1)\geq i+3$. In this case, $$
b^{-1}(\alpha_{i-1})=b(\alpha_{i-1})=b(\eps_i)-b(\eps_{i-1})=\eps_{i}-b(\eps_{i-1})<0 ,
$$
and $b(\alpha_{i-1})\neq\pm\alpha_{i-1}$, which implies that $bs_{i-1}\neq s_{i-1}b$. By Corollary \ref{length0}, we get that $s_{i-1}\ltimes b=s_{i-1}bs_{i-1}$
and $\rho(b)=\rho(s_{i-1}\ltimes b)+1$.

Now, $$
(s_{i-1}bs_{i-1})^{-1}(\alpha_i)=(s_{i-1}bs_{i-1})(\eps_{i+1}-\eps_i)=\eps_{i+1}-s_{i-1}b(\eps_{i-1})<0 ,
$$
and $(s_{i-1}bs_{i-1})(\alpha_{i})\neq\pm\alpha_{i}$ implies that $(s_{i-1}bs_{i-1})s_{i}\neq s_{i}(s_{i-1}bs_{i-1})$. It follows from Corollary \ref{length0}
that $$
s_{i}\ltimes (s_{i-1}\ltimes b)=s_is_{i-1}bs_{i-1}s_i,\quad \rho(b)=\rho(s_i\ltimes s_{i-1}\ltimes b)+2.
$$

Finally, $$
(s_is_{i-1}bs_{i-1}s_i)^{-1}(\alpha_{i+1})=(s_is_{i-1}bs_{i-1}s_i)(\eps_{i+2}-\eps_{i+1})=\eps_{i+2}-s_is_{i-1}b(\eps_{i-1})<0 ,
$$
and $(s_is_{i-1}bs_{i-1}s_i)(\alpha_{i+1})\neq\pm\alpha_{i+1}$ implies that $$(s_is_{i-1}bs_{i-1}s_i)s_{i+1}\neq s_{i+1}(s_is_{i-1}bs_{i-1}s_i).$$ It follows
from Corollary \ref{length0} that $$
s_{i+1}\ltimes (s_{i}\ltimes (s_{i-1}\ltimes b)=s_{i+1}s_is_{i-1}bs_{i-1}s_is_{i+1},\quad \rho(b)=\rho(s_{i+1}\ltimes s_i\ltimes s_{i-1}\ltimes b)+3.
$$
Set $d:=s_{i+1}\ltimes s_i\ltimes s_{i-1}\ltimes b$. Then $b=s_{i-1}\ltimes s_i\ltimes s_{i+1}\ltimes d$ and this is b)

By a similar reasoning (i.e., the same argument used in the proof of Lemma \ref{64a}), we can show that if $i<n-2$, then $b(i+3)<i+3$ implies that
$b=s_{i+2}\ltimes s_{i+1}\ltimes s_{i}\ltimes d$, where $d\in I_\ast$ and $\rho(b)=\rho(d)+3$, which is c) as required.

Therefore, we can assume that $b(i-1)\leq i-1$ whenever $i\geq 2$ and $b(i+3)\geq i+3$ whenever $i<n-2$. If $i=1$, then as $1<n-2$ we have that $b(4)\geq 4$.
It follows that $b(k)=k$ for any $k\geq 4$ and hence $b=1$ which contradicts our assumption. If $i\geq 2$, then as $s_0$ commutes with $s_i$ and $s_{i+1}$,
$s_0$ is not a descent of $b$, it follows that $b(\eps_1)>0$. Since $0<b(1)<b(2)<\cdots<b(i-1)\leq i-1$ and $i+3\leq b(i+3)<b(i+4)<\cdots<b(n)$, we conclude that $b(k)=k$ for
any $k$, which is again a contradiction. This completes the proof of the lemma.
\end{proof}

\begin{lem} \label{67aB} Let $W=W(B_n)$ be the Weyl group of type $B_n$. Let $w\in I_\ast$ be an involution, and let $s=s_{\alpha}$ and $t=s_{\beta}$ for some
$\alpha\neq\beta$ in $\Delta$ with $$(\alpha,\beta)\in\{(\alpha_i,\alpha_{i+1}),(\alpha_{i+1},\alpha_{i})|1\leq i<n-1\}.
$$
Assume that $s,t$ are both descents of $w$ and let $b\in I_\ast$ be the unique minimal length representative of $W_K wW_K$ where $K:=\<s,t\>$. Assume further that
$b$ has no descents which commute with both $s$ and $t$. Then $m_{st}=3$. Moreover, it holds that $bs=tb$ and $bt=sb$ (Case 6 in the notation of Lemma
\ref{7cases}) only if one of the following occurs: \begin{enumerate}
\item $b=1$;
\item $b=s_{i-1}\ltimes s_{i}\ltimes s_{i+1}\ltimes d$, where $2\leq i<n-1$, $d\in I_\ast$ and $\rho(b)=\rho(d)+3$;
\item $i=1$, $b=s_{3}\ltimes s_{2}\ltimes s_{1}\ltimes d$, where $d\in I_\ast$ and $\rho(b)=\rho(d)+3$;
\item $i=1$, $b=s_{0}\ltimes s_1\ltimes s_0 \ltimes s_2$;
\item $i=1$, $b=s_0\ltimes s_{3}\ltimes s_1\ltimes s_{2}\ltimes s_3 \ltimes s_1 \ltimes s_0 \ltimes s_1$.
\end{enumerate}
\end{lem}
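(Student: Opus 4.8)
The plan is to follow the strategy of the proof of Lemma~\ref{67a} in type $D_n$, exploiting the fact that in type $B_n$ the only special node is $\alpha_0=\eps_1$ and that it supplies the clean descent condition ``$b(\eps_1)>0$'' rather than the two-coordinate condition coming from the branch node in type $D$. First I would record the easy reductions. Since $(\alpha,\beta)=(\alpha_i,\alpha_{i+1})$ with $1\leq i<n-1$ lies entirely in the type-$A$ part of the diagram, we have $m_{st}=3$. Being in Case~6 means $bsb=t$ and $btb=s$, so by (\ref{reflection0B}) we get $b(\alpha)=\pm\beta$ and $b(\beta)=\pm\alpha$; since $s\ltimes t\ltimes b$ is a reduced $I_\ast$-sequence (the expression of $w$ in Case~6 of Lemma~\ref{7cases}), Corollary~\ref{length0} and Corollary~\ref{deletion2} force $b(\alpha)>0$, hence $b(\alpha)=\beta$. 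After this we may assume $(\alpha,\beta)=(\alpha_i,\alpha_{i+1})$, which in window notation reads $b(i)=-(i+2)$, $b(i+1)=-(i+1)$, $b(i+2)=-i$. Note that this forces $b\neq1$ (otherwise $s=t$), so option a) cannot actually occur and is kept only to parallel Lemma~\ref{64aB}.

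Next I would dispose of the generic range $i\geq2$. Here $s_0$ commutes with both $s_i$ and $s_{i+1}$, so the hypothesis that $b$ has no descent commuting with both of them, together with Lemma~\ref{length1B}, yields $0<b(1)<b(2)<\dots<b(i-1)$ and $b(i+3)<\dots<b(n)$. In particular $b(i-1)>0$, so from $b(\eps_i)=-\eps_{i+2}$ one computes $b(\alpha_{i-1})=-\eps_{i+2}-\eps_{b(i-1)}<0$ with $b(\alpha_{i-1})\neq\pm\alpha_{i-1}$; Corollary~\ref{length0} then strips off $s_{i-1}$, and repeating the identical root computation as in Lemma~\ref{67a} strips $s_i$ and $s_{i+1}$ in turn. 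This produces $b=s_{i-1}\ltimes s_i\ltimes s_{i+1}\ltimes d$ with $\rho(b)=\rho(d)+3$, which is option b). The cleanliness of the $s_0$-condition is exactly why the full range $2\leq i<n-1$ lands uniformly in b), with no analogue of the extra branch-node cases that appear in type $D$.

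The substantive case is $i=1$, where $s_0$ does not commute with $s_1$, so only the reflections $s_j$ with $j\geq4$ are excluded as descents; Lemma~\ref{length1B} then gives $b(4)<b(5)<\dots<b(n)$ while $b(\{1,2,3\})=\{-1,-2,-3\}$. The argument now splits into a trichotomy on $b(4)$. If $b(4)>0$ then $b(4)\geq4$ forces $b(k)=k$ for all $k\geq4$, and a direct computation identifies $b$ with $s_0\ltimes s_1\ltimes s_0\ltimes s_2$, giving option d). If $b(4)\leq-5$ then $b(\alpha_3)=\eps_{b(4)}+\eps_1<0$ with $b(\alpha_3)\neq\pm\alpha_3$, and after stripping $s_3$ the conjugate still sends $\alpha_2$, then $\alpha_1$, to negative non-simple roots, so Corollary~\ref{length0} strips $s_3,s_2,s_1$ in turn and yields $b=s_3\ltimes s_2\ltimes s_1\ltimes d$, option c). The remaining value $b(4)=-4$ is the genuinely different one: here $s_4$ not being a descent forces $b(k)=k$ for $k\geq5$, and although $s_3$ can still be stripped, the conjugate $s_3bs_3$ satisfies $(s_3bs_3)(\alpha_2)=-\alpha_2$, so the stripping of $s_2$ \emph{stalls}; $b$ is then the explicit eight-term element $s_0\ltimes s_3\ltimes s_1\ltimes s_2\ltimes s_3\ltimes s_1\ltimes s_0\ltimes s_1$ of option e), verified by a brute-force calculation.

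I expect the main obstacle to be the bookkeeping of the peeling steps in the case $i=1$: one must check at each stage that the relevant simple root is mapped to a negative root that is \emph{not} $\pm$ a simple root, so that Corollary~\ref{length0} applies in its non-commuting form, and in particular one must correctly separate the stalling value $b(4)=-4$ (option e)) from the generic $b(4)\leq-5$ (option c)). The two explicit identifications — of the four-term element in d) and the eight-term element in e) — are the only places where an honest hand computation is unavoidable.
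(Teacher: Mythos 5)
Your proposal is correct and follows essentially the same route as the paper's own proof: the same reductions via (\ref{reflection0B}), Corollary \ref{length0} and Corollary \ref{deletion2} to get $b(\alpha)=\beta$, the same uniform stripping argument for $i\geq 2$ yielding b), and for $i=1$ the same analysis of $b(4)$ (the paper groups $b(4)=\pm 4$ together for d) and e), where you split them, and it likewise sends $b(4)\leq -5$ to c) by peeling $s_3,s_2,s_1$). Your extra observations — that a) is vacuous in Case 6 and that the conjugation-stripping ``stalls'' at $-\alpha_2$ when $b(4)=-4$ — are accurate but not needed; they do not change the argument.
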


\begin{proof} By assumption, we must have that $n\geq 4$. Suppose that a) does not happen, i.e., $b\neq 1$. By assumption, we have that $bsb=t$ and $btb=s$.
It follows that $b(\alpha)=\pm\beta, b(\beta)=\pm\alpha$ (by (\ref{reflection0B})).
By the expression of $w$ given in Case 4, both $(t, s, b)$ and $(s, t, b)$ are reduced $I_\ast$-sequences. Applying Corollary \ref{length0} and Corollary
\ref{deletion2}, we can deduce that $b(\alpha)>0<b(\beta)$. It follows that $b(\alpha)=\beta, b(\beta)=\alpha$.

Without loss of generality, we can assume that $(\alpha,\beta)=(\alpha_i,\alpha_{i+1})$ for some $1\leq i<n-1$. Then $b(i)=-(i+2), b(i+1)=-(i+1), b(i+2)=-i$.
By Lemma \ref{length1} and the assumption that $s_t$ is not a descent of $w$ for any $t<i-1$ or $t\geq i+3$, we can deduce that \begin{equation}\label{b01B2}
b(1)<b(2)<\dots<b(i-1)\,\,\,\text{and}\,\,\,b(i+3)<b(i+4)<\dots<b(n)
\end{equation}
and $b\neq 1$.
Suppose that a) does not happen, i.e., $b\neq 1$. There are only the following two possibilities:

\smallskip
{\it Case 1.} $(\alpha,\beta)=(\alpha_i,\alpha_{i+1})$ for some $2\leq i<n-1$. Since $s_0$ commutes with $s_i$ and $s_{i+1}$, we see that $s_0$ is not a
descent of $b$. Thus $b(\eps_1)>0$. It follows from (\ref{b01B2}) that $b(i-1)>0$. Then using the same argument in the proof of Lemma \ref{64aB}, we can show that
$b(i-1)>0$ implies that $b=s_{i-1}\ltimes s_i\ltimes s_{i+1}\ltimes d$ with $\rho(b)=\rho(d)+3$, which is b) as required.

\smallskip
{\it Case 2.} $(\alpha,\beta)=(\alpha_1,\alpha_2)$. Then $b(1)=-3, b(3)=-1, b(2)=-2$. By (\ref{b01B2}), we have that $b(4)<b(5)<\dots<b(n)$.

If $b(4)=\pm4$, then by (\ref{b01B2})  we can deduce that $b(j)=j$ for any $j\geq5$. It follows that either $$
b=(s_1s_2s_1)(s_0)(s_1s_0s_1)(s_2s_1s_0s_1s_2)=s_0 s_1 s_0s_2 s_1 s_0=s_{0}\ltimes s_1\ltimes s_0 \ltimes s_2,
$$
or $$\begin{aligned}
b&=(s_1s_2s_1)(s_0)(s_1s_0s_1)(s_2s_1s_0s_1s_2)(s_3s_2s_1s_0s_1s_2s_3)\\
&=s_0s_3s_1s_2s_1s_0s_1s_0s_3s_2s_1s_3s_0=s_0\ltimes s_{3}\ltimes s_1\ltimes s_{2}\ltimes s_3 \ltimes s_1 \ltimes s_0 \ltimes s_1,
\end{aligned}$$
which are d) and e) respectively as required.

It remains to consider the case when $b(4)\leq -5$. In this case, using the same argument in the proof of Lemma \ref{67a}, we can show that
$b(4)\leq -5$ implies that $b=s_{3}\ltimes s_2\ltimes s_{1}\ltimes d$ with $\rho(b)=\rho(d)+3$, which is c) as required. This completes the proof of the Lemma.
\end{proof}

\begin{rem} \label{rem0B} We consider reduce $I_\ast$-expressions for involutions in the Weyl group of type $B_n$. In this case, in addition to the basic braid
$I_\ast$-transformations given by Proposition \ref{braid}, one clearly has to add the following natural ``right end transformations": $$
s_i\ltimes s_{i+1}\longleftrightarrow s_{i+1}\ltimes s_i,\,\,\,s_0\ltimes s_1\ltimes s_{0}\longleftrightarrow s_1\ltimes s_{0}\ltimes s_1,\,\,
s_k\ltimes s_{j}\longleftrightarrow s_{j}\ltimes s_k,
$$
where $1\leq i<n-1$, $0\leq j,k<n$, $|j-k|>1$. However, as in the type $D_n$ case, these are {\it NOT} all the basic braid $I_\ast$-transformations for
involutions in $W(B_n)$ that we need. In fact, one has to add an extra transformation in the case of type $B_n$ (see the last transformation in
Definition \ref{braid1B}), which is a new phenomenon for type $B_n$.
\end{rem}

\begin{dfn}\label{braid1B} By a basic braid $I_\ast$-transformation, we mean one of the following transformations and their inverses: $$
\begin{aligned}
1)\quad &(\cdots,s_0,s_{1},s_0,s_1\cdots)\longmapsto(\cdots,s_{1},s_0,s_{1},s_0,\cdots) ,\\
2)\quad &(\cdots,s_j,s_{j+1},s_j,\cdots)\longmapsto(\cdots,s_{j+1},s_j,s_{j+1},\cdots) ,\\
3)\quad &(\cdots,s_b,s_{c},\cdots)\longmapsto (\cdots,s_{c},s_b,\cdots) ,\\
4)\quad & (\cdots,s_k,s_{k+1})\longmapsto (\cdots,s_{k+1},s_k) ,\\
5)\quad & (\cdots,s_0,s_1,s_0)\longmapsto (\cdots,s_1,s_0,s_1) ,\\
6)\quad & (\cdots,s_0,s_1,s_0,s_2,s_1,s_0)\longmapsto (\cdots,s_1,s_0,s_1,s_2,s_1,s_0) ,
\end{aligned}
$$
where all the sequences appearing above are reduced sequences, and the entries marked by corresponding ``$\cdots$" must match, and in the first two
transformations (i.e., 1) and 2)) we further require that the right end part entries marked by ``$\cdots$" must be non-empty.
We define a braid $I_\ast$-transformation to be the composition of a series of basic braid $I_\ast$-transformations.
\end{dfn}

\begin{thm} \label{braid0B} Let $(s_{i_1},\cdots,s_{i_k}), (s_{j_1},\cdots,s_{j_k})$ be two reduced $I_\ast$-sequences which can be transformed into each other through a series of basic braid transformations. Then $$
s_{i_1}\ltimes s_{i_2}\ltimes\dots\ltimes s_{i_k}=s_{j_1}\ltimes s_{j_2}\ltimes\dots\ltimes s_{j_k} .
$$
\end{thm}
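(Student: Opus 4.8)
The plan is to mirror the proof of Theorem \ref{braid0}. Since every braid $I_\ast$-transformation is, by definition, a composition of basic ones, it suffices to show that each of the six types of basic braid $I_\ast$-transformation in Definition \ref{braid1B} leaves the involution $s_{i_1}\ltimes\cdots\ltimes s_{i_k}$ unchanged. Because the operation $\ltimes$ depends only on the involution to which it is applied and the parts marked ``$\cdots$'' agree on the two sides, in each case one is reduced to comparing the two short $\ltimes$-products that the transformation actually modifies.

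First I would dispose of transformations 1)--5). Transformation 1) is the order-$4$ braid relation between $s_0$ and $s_1$ and transformation 2) is the order-$3$ relation $s_js_{j+1}s_j=s_{j+1}s_js_{j+1}$; both are genuine braid relations and hence preserve the involution by Proposition \ref{braid}. Transformation 3) is the commutation relation $m_{st}=2$ and is again covered by Proposition \ref{braid}. For the two ``right end'' transformations the modified $\ltimes$-products are evaluated at $1$, and a short computation reduces the claim to the ordinary braid relations of $W(B_n)$: one has $s_k\ltimes s_{k+1}\ltimes 1=s_ks_{k+1}s_k=s_{k+1}s_ks_{k+1}=s_{k+1}\ltimes s_k\ltimes 1$ for transformation 4), and $s_0\ltimes s_1\ltimes s_0\ltimes 1=s_0s_1s_0s_1=s_1s_0s_1s_0=s_1\ltimes s_0\ltimes s_1\ltimes 1$ for transformation 5), exactly as recorded in Remark \ref{rem0B}.

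The only genuinely new ingredient is the extra transformation 6), which is peculiar to type $B_n$. For it I would run a brute-force calculation in the signed-permutation model (\ref{sign0B}), evaluating the non-associative product one step at a time from the inside out and, at each step, testing whether the incoming simple reflection commutes with the involution accumulated so far. The effect of the single $s_2$ is to carry the sign change from the second to the third coordinate, and both orderings ultimately assemble the three commuting sign changes $s_{\eps_1},s_{\eps_2},s_{\eps_3}$. Thus both sides collapse to the longest element of the standard parabolic $W(B_3)$, the triple sign change $(1,-1)(2,-2)(3,-3)$, giving
\begin{equation}\label{bb}
s_0\ltimes s_1\ltimes s_0\ltimes s_2\ltimes s_1\ltimes s_0
= s_0s_1s_0s_1s_2s_1s_0s_1s_2
= s_1\ltimes s_0\ltimes s_1\ltimes s_2\ltimes s_1\ltimes s_0 .
\end{equation}

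The main obstacle is precisely \eqref{bb}. Unlike transformations 1)--5), it is not an instance of Proposition \ref{braid}: the two sides are not of the shape $\TPro(s,t;m)\,w$ versus $\TPro(t,s;m)\,w$, so there is no braid relation to invoke and the identity must be checked by hand. The computation is elementary but delicate because $\ltimes$ is not associative, so the bookkeeping has to respect the right-nested bracketing of Definition \ref{twistedinvolutions2}; tracking each intermediate involution as an explicit signed permutation is the cleanest way to keep this under control and to confirm the common value in \eqref{bb}.
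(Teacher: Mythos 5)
Your proposal is correct and takes essentially the same route as the paper's proof: transformations 1)--3) are handled by Proposition \ref{braid}, the right-end transformations 4)--5) reduce to the ordinary braid relations of $W(B_n)$ exactly as recorded in Remark \ref{rem0B}, and the extra transformation 6) is settled by a direct computation. The paper verifies that same identity by unfolding both $\ltimes$-products into length-$9$ words, which agrees with your signed-permutation calculation since both sides are the longest element $(1,-1)(2,-2)(3,-3)$ of the parabolic $\langle s_0,s_1,s_2\rangle$.
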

\begin{proof} This follows easily from Proposition \ref{braid}, Definition \ref{braid1B} except for the last transformation 6) in Definition \ref{braid1B}. In fact,
by direct calculation, we can get that $$\begin{aligned}
s_0\ltimes s_1\ltimes s_0\ltimes s_2\ltimes s_1\ltimes s_0&=s_0s_1s_0s_2s_1s_0s_1s_2s_1=s_1s_0s_2s_1s_0s_1s_2s_1s_0\\
&=s_1\ltimes s_0\ltimes s_1\ltimes s_2\ltimes s_1\ltimes s_0 \,\,,
\end{aligned}$$
as required.
\end{proof}

As in Section 3, we sometimes use the simplified notations $(i_1,\cdots,i_k)\longleftrightarrow (j_1,\cdots,j_k)$ and $(s_{i_1},\cdots,s_{i_k})\longleftrightarrow
(s_{j_1},\cdots,s_{j_k})$ in place of  $s_{i_1}\ltimes \cdots\ltimes s_{i_k}\longleftrightarrow s_{j_1}\ltimes \cdots\ltimes s_{j_k}$.

\begin{thm} \label{mainthm0B} Let $w\in I_\ast$. Then any two reduced $I_\ast$-expressions for $w$ can be transformed into each other through a series of basic braid $I_\ast$-transformations.
\end{thm}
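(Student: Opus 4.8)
The plan is to follow, step for step, the inductive proof of Theorem \ref{mainthm0}, substituting the type-$B_n$ lemmas \ref{64aB}, \ref{67aB} and \ref{64bB} for their type-$D_n$ analogues and the relations of Definition \ref{braid1B} for those of Definition \ref{braid1}. I would induct on $\rho(w)$, assuming the statement for all involutions of rank $\leq k$ and treating $w$ with $\rho(w)=k+1$. Given two reduced $I_\ast$-expressions $(s_{i_0},\dots,s_{i_k})$ and $(s_{j_0},\dots,s_{j_k})$ for $w$, I set $s=s_{i_0}$, $t=s_{j_0}$ and let $m$ be the order of $st$; since $s,t$ are distinct descents of $w$ and $W$ is of type $B_n$, we have $m\in\{2,3,4\}$, the value $m=4$ occurring precisely when $\{s,t\}=\{s_0,s_1\}$. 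In every case $s,t$ are descents and $K=\langle s,t\rangle$, so Lemma \ref{7cases} produces the minimal double-coset representative $b\in I_\ast$ and pins down which of its seven configurations can occur. As in the type-$D$ argument, to prove $(i_0,\dots,i_k)\longleftrightarrow(j_0,\dots,j_k)$ it suffices to bring both expressions to ones beginning with a common generator and then apply the induction hypothesis.

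The order-$3$ analysis is essentially identical to the type-$D_n$ one. Cases 1, 2, 3 of Lemma \ref{7cases} give the chain $(i_0,\dots,i_k)\longleftrightarrow s\ltimes t\ltimes s\ltimes b\longleftrightarrow t\ltimes s\ltimes t\ltimes b\longleftrightarrow(j_0,\dots,j_k)$, the outer two steps by the induction hypothesis and the middle one by the expression for $w$ in those cases. For Cases 4 and 6 I would invoke Lemmas \ref{64aB} and \ref{67aB}: when $b=1$ the claim is the bare braid relation, and for the recursive shapes $b=s_{i-1}\ltimes s_i\ltimes s_{i+1}\ltimes d$, $b=s_{i+2}\ltimes s_{i+1}\ltimes s_i\ltimes d$ and $b=s_3\ltimes s_2\ltimes s_1\ltimes d$ I reduce both expressions to ones beginning with a common generator and finish by induction, exactly as in the type-$A$ and type-$D$ computations. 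The genuinely sporadic shapes d) and e) of Lemma \ref{67aB} would be handled by explicit chains of basic braid $I_\ast$-transformations whose crucial step is the extra relation 6) of Definition \ref{braid1B}, in the same spirit as the long chains for relation 7) in the proof of Theorem \ref{mainthm0}. Having settled $m=3$, I would record the type-$B_n$ form of Observation 1: if the two expressions can be brought to begin with generators whose product has order $1$ or $3$, they are equivalent.

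The case $m=4$ is new. Here $\{s,t\}=\{s_0,s_1\}$, so Lemma \ref{64bB} rules out Cases 4, 6, 7, leaving only Cases 1, 2, 3 and Case 5. In Cases 1, 2, 3 the expression $w=s\ltimes t\ltimes s\ltimes t\ltimes b$ lets me pass from one expression to the other through the order-$4$ braid relation 1) of Definition \ref{braid1B} and the induction hypothesis. In Case 5, where $(s_0,s_1,s_0,b)$ and $(s_1,s_0,s_1,b)$ are both reduced, Lemma \ref{64bB} forces $b\in\{1,\ s_2\ltimes s_1\ltimes s_0,\ s_2\ltimes s_1\ltimes s_0\ltimes s_1\ltimes s_2\ltimes d\}$: for $b=1$ the two expressions differ by relation 5), for $b=s_2\ltimes s_1\ltimes s_0$ they differ by exactly the extra relation 6), and the remaining recursive shape is connected by an explicit chain pivoting once more on relation 6), after which the induction hypothesis applies. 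With $m=4$ in hand I would extend Observation 1 so that the two initial generators are also allowed to have product of order $4$.

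Finally, for $m=2$ (so $st=ts$) I would repeat the Exchange-Property argument of Theorem \ref{mainthm0}. Lemma \ref{square} gives $\rho(s_{j_0}\ltimes w)=k$, whence $\ell(s_{j_0}w)=\ell(w)-1$ by Lemma \ref{rankfunc}, and Proposition \ref{exchange} lets me delete one letter $s_{i_a}$ from $(s_{i_0},\dots,s_{i_k})$ after prepending $s_{j_0}$. The induction hypothesis reduces the goal to $(j_0,i_0,i_1,\dots,i_{a-1},i_{a+1},\dots,i_k)\longleftrightarrow(i_0,\dots,i_k)$; for $a>0$ this is immediate from the commutation $st=ts$ together with the extended Observation 1, while for $a=0$ I would peel generators off the tail $w_1=s_{i_1}\ltimes\cdots\ltimes s_{i_k}$ one at a time, tracking their action on $\eps_1,\dots,\eps_n$ by Lemma \ref{length1B}, until the commuting pair $\{s,t\}$ can be transposed using the already-established order-$3$ and order-$4$ relations. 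I expect the main obstacle to be exactly the sporadic configurations — shape c) of Lemma \ref{64bB} and shapes d), e) of Lemma \ref{67aB} — for which the connecting chains must pass through the extra relation 6) and have to be produced by explicit, if mechanical, sequences of basic braid $I_\ast$-transformations; the secondary difficulty will be verifying that the peeling procedure in the $a=0$ subcase terminates in a reducible configuration rather than looping.
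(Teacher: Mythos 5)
Your proposal follows the paper's proof of Theorem \ref{mainthm0B} essentially step for step: the same induction on $\rho(w)$, the same strategy of bringing both expressions to a common initial generator, the same case split $m\in\{2,3,4\}$ handled via Lemmas \ref{7cases}, \ref{64aB}, \ref{67aB} and \ref{64bB}, explicit transformation chains for the sporadic shapes, an Observation allowing initial generators of order $1$, $3$ or $4$, and the Exchange-Property peeling argument for $m=2$. The only (harmless) divergence is a computational prediction: the paper's chains for case e) of Lemma \ref{67aB} and case c) of Lemma \ref{64bB} pivot on the order-$4$ braid relation 1) of Definition \ref{braid1B} rather than on the extra relation 6), which is genuinely needed only for case d) of Lemma \ref{67aB} and case b) of Lemma \ref{64bB} — but the required chains exist within Definition \ref{braid1B} either way, so your plan goes through.
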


\begin{proof} We prove the theorem by induction on $\rho(w)$. Suppose that the theorem holds for any $w\in I_\ast$  with $\rho(w)\leq k$. Let $w\in I_\ast$ with
 $\rho(w)=k+1$. Let $(s_{i_0},s_{i_1},s_{i_2},\cdots,s_{i_k})$ and $(s_{j_0},s_{j_1},s_{j_2},\cdots,s_{j_k})$ be two reduced $I_\ast$-expressions for
  $w\in I_\ast$. We need to prove that \begin{equation}\label{goalB}
(i_0,i_1,\cdots,i_k)\longleftrightarrow (j_0,j_1,\cdots,j_k).
\end{equation}
For simplicity, we set $s=s_{i_0}, t=s_{j_0}$. Let $m$ be the order of $st$.

As in the proof of Theorem \ref{mainthm0}, we shall use the strategy (\ref{strategy1}). That says, we want to show that there exists some $s_\alpha\in S$ such
that $(i_0,\dots,i_k)\longleftrightarrow (s_\alpha,\dots)$ and $(j_0,\dots,j_k)\longleftrightarrow (s_\alpha,\dots)$. Once this is proved, then by induction
hypothesis we are done.

Suppose that $m=3$. Then we are in the situations of Cases 1,2,3,4,6 of Lemma \ref{7cases}. Suppose that we are in Cases 1,2,3 of Lemma \ref{7cases}. Then we get
that \begin{equation}\label{123}
s_{i_0}\ltimes s_{i_1}\ltimes\cdots\ltimes s_{i_k}\longleftrightarrow s\ltimes t\ltimes s\ltimes b\longleftrightarrow t\ltimes s\ltimes t\ltimes
b\longleftrightarrow s_{j_0}\ltimes s_{j_1}\ltimes\cdots\ltimes s_{j_k},
\end{equation}
where the first and the third ``$\longleftrightarrow$" follows from induction hypothesis, and the second ``$\longleftrightarrow$" follows from the expression of
$w$ given in Cases 1,2,3 of Lemma \ref{7cases}. It remains to consider Cases 4,6 of Lemma \ref{7cases}. To this end, we shall apply Lemmas \ref{64aB} and
\ref{67aB}. In these cases, if $b=1$, then $k=1$ and $s_{i_0}\ltimes s_{i_1}=s\ltimes t\longleftrightarrow t \ltimes s=s_{j_0}\ltimes s_{j_1}$.
Henceforth, we assume that $b\neq 1$.

Suppose that we are in Case b) of Lemma \ref{64aB} or of Lemma \ref{67aB}. Without loss of generality, we assume that $i_0=i$ and $j_0=i+1$. Then $$\begin{aligned}
&\quad (i_0,i_1,\cdots,i_k)\longleftrightarrow (i,i+1,\underbrace{i-1,i,i+1,d})\longleftrightarrow (i,\underbrace{i-1,i+1},i,i+1,d)
\longleftrightarrow\\
&\quad (i,i-1,\underbrace{i,i+1,i},d)\longleftrightarrow (\underbrace{i-1,i,i-1},i+1,i,d)
\longleftrightarrow (i-1,\underbrace{i+1,i,i-1,i+1,d})\\
&\quad \longleftrightarrow (\underbrace{i+1,i-1},i,i-1,i+1,d)\longleftrightarrow
(i+1,\underbrace{i,i-1,i},i+1,d)\\
&\qquad \longleftrightarrow (j_0,j_1,\cdots,j_k),
\end{aligned}
$$
as required.

Suppose that we are in Case c) of Lemma \ref{64aB}. Without loss of generality, we assume that $i_0=i$ and $j_0=i+1$. Then $$\begin{aligned}
&\quad (i_0,i_1,\cdots,i_k)\longleftrightarrow (i,i+1,\underbrace{i+2,i+1,i,d})\longleftrightarrow (i,\underbrace{i+2,i+1,i+2},i,d)
\longleftrightarrow\\
&\quad (\underbrace{i+2,i},i+1,i+2,i,d)\longleftrightarrow (i+2,\underbrace{i+1,i+2,i,i+1,d})
\longleftrightarrow\\
&\quad  (\underbrace{i+1,i+2,i+1},i,i+1,d)\longleftrightarrow (i+1,i+2,\underbrace{i,i+1,i},d)\longleftrightarrow\\
&\qquad (i+1,\underbrace{i,i+2},i+1,i,d)\longleftrightarrow (j_0,j_1,\cdots,j_k),
\end{aligned}
$$
as required. A similar argument also applies if we are in Case c) of Lemma \ref{67aB}.

Suppose that we are in Case d) of Lemma \ref{67aB}. Without loss of generality, we assume that $i_0=1$ and $j_0=2$. Then $$\begin{aligned}
&(i_0,i_1,\cdots,i_k)\longleftrightarrow (1, 2, 0, 1, 0, 2)\longleftrightarrow (1, \underbrace{0, 2}, 1, \underbrace{2, 0})\longleftrightarrow (1, 0,
\underbrace{1, 2, 1}, 0)\\
&\qquad\longleftrightarrow (\underbrace{0, 1, 0, 2, 1, 0})\longleftrightarrow (0, 1, \underbrace{2, 0}, 1, 0)\longleftrightarrow(0, 1, 2, \underbrace{1, 0, 1})
\longleftrightarrow (0, \underbrace{2, 1, 2}, 0, 1)\\
&\qquad \longleftrightarrow (\underbrace{2, 0}, 1, \underbrace{0, 1, 2})\longleftrightarrow (2, 1, 0, 1, 0, 2)
\longleftrightarrow (j_0,j_1,\cdots,j_k),
\end{aligned}
$$
as required.

Suppose that we are in Case e) of Lemma \ref{67aB}. Without loss of generality, we assume that $i_0=1$ and $j_0=2$. Then  $$\begin{aligned}
&\quad\,\,(i_0,i_1,\cdots,i_k)\longleftrightarrow (1, 2, \underbrace{0, 3, 1, 2, 3, 1, 0, 1})\\
&\longleftrightarrow
(1, 2, \underbrace{3, 0}, 1, 2, 3, 1, 0, 1)\longleftrightarrow
(1, 2, 3, 0, 1, 2, \underbrace{1, 0, 1, 3})\\
&\longleftrightarrow
(1, 2, 3, 0, \underbrace{2, 1, 2}, 0, 1, 3)\longleftrightarrow
(1, 2, 3, \underbrace{2, 0}, 1, 2, \underbrace{3, 0, 1})\\
&\longleftrightarrow
(1, \underbrace{3, 2, 3}, 0, 1, 2, 3, 0, 1)\longleftrightarrow
(\underbrace{3, 1}, \underbrace{0, 2, 1, 3}, 2, 3, 0, 1)\\
&\longleftrightarrow
(3, 1, 0, 2, 1, \underbrace{2, 3, 2}, 0, 1)\longleftrightarrow
(3, 1, \underbrace{2, 0}, 1, \underbrace{0, 2, 3, 2}, 1)\\
&\longleftrightarrow
(3, 1, 2, 0, 1, 0, 2, \underbrace{1, 2, 3})\longleftrightarrow
(3, 1, 2, 0, 1, 0, \underbrace{1, 2, 1}, 3)\\
&\longleftrightarrow
(3, 1, 2, \underbrace{1, 0, 1, 0}, 2, 1, 3)\longleftrightarrow
(3, \underbrace{2, 1, 2}, 0, 1, 0, 2, 1, 3)\\
&\longleftrightarrow
(3, 2, 1, \underbrace{0, 2}, 1, \underbrace{2, 0}, 1, 3)\longleftrightarrow
(3, 2, 1, 0, \underbrace{1, 2, 1}, 0, 1, 3)\\
&\longleftrightarrow
(3, 2, 1, 0, 1, 2, \underbrace{3, 1, 0, 1})\longleftrightarrow
(3, 2, 1, 0, 1, 2, 3, \underbrace{0, 1, 0})\\
&\longleftrightarrow
(3, 2, 1, 0, 1, \underbrace{0, 2, 3}, 1, 0)\longleftrightarrow
(3, 2, \underbrace{0, 1, 0, 1}, 2, 3, 1, 0)\\
&\longleftrightarrow
(\underbrace{0, 3, 2},  1, 0, 1, 2, 3, 1, 0).
\end{aligned}
$$
On the other hand, according to the definition of the braid $I_\ast$-transformations, we get that $$\begin{aligned}
&\quad\,(j_0,j_1,\cdots,j_k)\longleftrightarrow (2, 1, 0, 3, 1, 2, 3, 1, 0, 1 )\\
&\longleftrightarrow
(2, 1, 0, \underbrace{1, 3}, 2, 3, 0, 1, 0)\longleftrightarrow
(2, 1, 0, 1, \underbrace{0, 3, 2, 3}, 1, 0)\\
&\longleftrightarrow
(2, \underbrace{0, 1, 0, 1}, 3, 2, 3, 1, 0)\longleftrightarrow
(\underbrace{0, 2}, 1, 0, 1, 3, 2, 3, 1, 0).\\
\end{aligned}$$
So again we are in a position to apply the induction hypothesis. This completes the proof of (\ref{goalB}) when $m=3$.

Now suppose that $m=4$. Then by Lemma \ref{64bB} we are in the situations of Cases 1,2,3,5 of Lemma \ref{7cases}. Suppose that either $b=1$ or we are in Case
1,2,3 of Lemma
\ref{7cases}. Then we
can use a similar argument as (\ref{123}) to show that (\ref{goalB}) holds. It remains to consider Case 5 of Lemma \ref{7cases}. To this end, we shall apply
Lemma \ref{64bB}.

Suppose that we are in Case b) of Lemma \ref{64bB}. Without loss of generality, we assume that $i_0=0$ and $j_0=1$. Then using the 6th transformation given in
Definition \ref{braid1B}, we can get that $$
(i_0,i_1,\cdots,i_k)\longleftrightarrow (0,1,0,2,1,0)\longleftrightarrow (1,0,1,2,1,0)\longleftrightarrow (j_0,j_1,\cdots,j_k),
$$
as required.

Suppose that we are in Case c) of Lemma \ref{64bB}. Without loss of generality, we assume that $i_0=0$ and $j_0=1$.
$$\begin{aligned}
&\quad\,\,(i_0,i_1,\cdots,i_k)\longleftrightarrow (0, 1, 0, 2, 1, 0, 1, 2, d)\longleftrightarrow (0, 1, \underbrace{2, 0}, 1, 0, 1, 2, d)\\
&\longleftrightarrow (0, 1, 2, \underbrace{1, 0, 1, 0} 2, d)\longleftrightarrow (0, \underbrace{2, 1, 2}, 0, 1, 0, 2, d)\\
&\longleftrightarrow (\underbrace{2, 0}, 1, 2, 0, 1, 0, 2, d),
\end{aligned}$$
On the other hand, $$\begin{aligned}
&\quad\,\,(j_0,j_1,\cdots,j_k)\longleftrightarrow (1, 0, 1, 2, 1, 0, 1, 2, d)\longleftrightarrow (1, 0, \underbrace{2, 1, 2}, 0, 1, 2, d)\\
&\longleftrightarrow (1, \underbrace{2, 0}, 1, \underbrace{0, 2}, 1, 2, d)\longleftrightarrow (1, 2, 0, 1, 0, \underbrace{1, 2, 1}, d)\\
&\longleftrightarrow  (1, 2, \underbrace{1, 0, 1, 0}, 2, 1, d)\longleftrightarrow  (\underbrace{2, 1, 2}, 0, 1, 0, 2, 1, d) .
\end{aligned}$$
Therefore, we can apply induction hypothesis to get that $(i_0,i_1,\cdots,i_k)\longleftrightarrow (j_0,j_1,\cdots,j_k)$. This completes the proof of
(\ref{goalB}) when $m=4$.

We make the following useful observation.

{\it Observation 2.} If there exist $s_\alpha, s_\beta\in S$ such that $s_\alpha s_\beta$ has order $1, 3$ or $4$, and $$
(i_0,i_1,\dots,i_k)\longleftrightarrow
(s_\alpha,\dots)\,\,\,\text{and}\,\,\,(j_0,j_1,\dots,j_k)\longleftrightarrow (s_\beta,\dots),
$$ then by the results we have obtained we can deduce that $(i_0,i_1,\dots,i_k)\longleftrightarrow
(j_0,j_1,\dots,j_k)$.

It remains to consider the case when $m=2$. That says, $st=ts$. We use a similar argument as in the proof of Theorem \ref{mainthm0} (cf. the paragraphs after
Observation 1). As in the proof of Theorem \ref{mainthm0}, there exists some integer $0\leq a\leq k$, such that $$
s_{j_0}\ltimes(s_{i_0}\ltimes s_{i_1}\ltimes s_{i_2}\ltimes\cdots\ltimes s_{i_k})=s_{i_0}\ltimes s_{i_1}\ltimes s_{i_2}\ltimes\cdots
\ltimes s_{i_{a-1}}\ltimes s_{i_{a+1}}\ltimes\cdots\ltimes s_{i_k} .
$$
In order to prove (\ref{goalB}), it suffices to show that \begin{equation}\label{goal2B0} (j_0,i_0,i_1,i_2,\cdots,i_{a-1},i_{a+1},\cdots,i_k)\longleftrightarrow
(i_0,i_1,i_2,\cdots,i_k).\end{equation}
Moreover, just as in the proof of Theorem \ref{mainthm0}, using Observation 2 we can consider only the case when $a=0$ and $j_0=i_0+2$. So in this case,
(\ref{goal2B0}) is reduced
to \begin{equation}\label{goal2B} (j_0,i_1,i_2,\cdots,i_k)\longleftrightarrow
(i_0,i_1,i_2,\cdots,i_k).\end{equation}

We write $w_1=s_{i_1}\ltimes s_{i_2}\ltimes\cdots\ltimes s_{i_k}$. If $i_1<i_0-1$ or $i_1>i_0+3$, then $$
(i_0,i_1,\cdots)\longleftrightarrow (i_1,i_0,\cdots)\longleftrightarrow (i_1,j_0,\cdots)\longleftrightarrow (j_0,i_1,\cdots),
$$
as required. If $i_1=i_0-1$, then by Observation 2, $$
(j_0,i_1,\cdots)\longleftrightarrow (j_0,i_0-1,\cdots)\longleftrightarrow (i_0-1,j_0,\cdots)\longleftrightarrow (i_0,i_1,\cdots),
$$
as required. A similar argument applies to the case when $i_1=i_0+3$. Therefore, we can consider only the case when $i_1=i_0+1$. Since $s_{i_0}\ltimes s_{i_0+1}
\neq s_{i_0+2}\ltimes s_{i_0+1}$, we get that $w_2:=s_{i_2}\ltimes\cdots\ltimes s_{i_k}\neq 1$.

Note that $i_2\neq i_0+1$ because $s_{i_1}\ltimes s_{i_2}\ltimes\cdots\ltimes s_{i_k}$ is reduced. If $i_2<i_0-1$ or $i_2>i_0+3$, then $$
(i_0,i_1,i_2,\cdots)\longleftrightarrow (i_2,i_0,i_1,\cdots)\longleftrightarrow (i_2,j_0,i_1,\cdots)\longleftrightarrow (j_0,i_1,i_2,\cdots),
$$
as required. If $i_2=i_0-1$, then by Observation 2, $$
(j_0,i_1,i_2,\cdots)\longleftrightarrow (i_2,j_0,i_1,\cdots)\longleftrightarrow (i_0-1,j_0,\cdots)\longleftrightarrow (i_0,i_1,i_2,\cdots),
$$
as required. A similar argument applies to the case when $i_2=i_0+3$. If $i_2=i_0+2$, then by Observation 2, $$
(j_0,i_1,i_2,\cdots)\longleftrightarrow (i_0+2,i_0+1,i_0+2,\cdots)\longleftrightarrow (i_0+1,i_0,i_0+1,\cdots)\longleftrightarrow (i_0,i_1,i_2,\cdots),
$$
as required. A similar argument applies to the case when $i_0\geq 1$ and $i_2=i_0$. Therefore, we can consider only the case when $i_2=i_0=0$.
Since $s_{0}\ltimes s_{1}\ltimes s_0\neq s_{2}\ltimes s_{1}\ltimes s_0$, we get that $w_3:=s_{i_3}\ltimes\cdots\ltimes s_{i_k}\neq 1$.

Now $i_3\neq 0$ because $s_{i_2}\ltimes s_{i_3}\ltimes\cdots\ltimes s_{i_k}$ is reduced. If $i_3>3$, then $$\begin{aligned}
(i_0,i_1,i_2,i_3,\cdots)&\longleftrightarrow (0,1,0,i_3,\cdots)\longleftrightarrow (i_3,0,1,0,\cdots)\longleftrightarrow
(i_3,2,1,0,\cdots)\\
&\quad \longleftrightarrow (j_0,i_1,i_2,i_3,\cdots),
\end{aligned} $$
as required. If $i_3=3$, then by Observation 2,
$$\begin{aligned}
(i_0,i_1,i_2,i_3,\cdots)&\longleftrightarrow (0,1,0,3,\cdots)\longleftrightarrow (3,0,1,0,\cdots)\longleftrightarrow
(2,1,0,1,\cdots)\\
&\quad \longleftrightarrow (j_0,i_1,i_2,i_3,\cdots),
\end{aligned} $$
as required.  If $i_3=2$, then by Observation 2,
$$\begin{aligned}
(j_0,i_1,i_2,i_3,\cdots)&\longleftrightarrow (2,1,0,2,\cdots)\longleftrightarrow (2,1,2,0,\cdots)\longleftrightarrow
(1,2,1,0,\cdots)\\
&\quad \longleftrightarrow (0,1,0,2,\cdots)\longleftrightarrow (i_0,i_1,i_2,i_3,\cdots),
\end{aligned} $$
as required. Finally, if $i_3=1$, then as $s_0\ltimes s_1\ltimes s_0\ltimes s_1$ is not reduced, we can deduce that
$w_4:=s_{i_4}\ltimes\cdots\ltimes s_{i_k}\neq 1$. In this case, by Observation 2, $$\begin{aligned}
(i_0,i_1,i_2,i_3,\cdots)&\longleftrightarrow (0,1,0,1,\cdots)\longleftrightarrow (1,0,1,0,\cdots)\longleftrightarrow
(2,1,0,1,\cdots)\\
&\quad \longleftrightarrow (j_0,i_1,i_2,i_3,\cdots),
\end{aligned} $$
as required. This completes the proof of (\ref{goal2B}) and hence the proof of (\ref{goalB}) when $m=2$. This finishes the proof of the theorem.
\end{proof}

\bigskip

\section{An application}

The basic braid $I_{\ast}$-transformations which we found in Definition \ref{braid1} and \ref{braid1B} and Theorem \ref{mainthm0} and \ref{mainthm0B} are
 very useful for analysing the Hecke module structures on the space spanned by involutions. The point is that it reduces the verification of Hecke defining
 relations to a {\it finite} doable calculations. In this section, we shall give an application of this observation to Lusztig's conjecture which is our
 original motivation.

\begin{lem} \label{decomp1} The elements in the following set $$\begin{aligned}
&\bigl\{(s_{c_1}s_{c_1-1}\dots s_2 s_1)(s_{c_2}s_{c_2-1}\dots s_2 s_u)\dots (s_{c_{k-1}}s_{c_{k-1}-1}\dots s_2 s_1)(s_{c_k}s_{c_k-1}\dots s_2 s_u)\\
&\qquad \bigm|\text{$k$ is even, and $1\leq c_1<c_2<\dots<c_k<n$}\bigr\}\bigcup\\
&\bigl\{(s_{c_1}s_{c_1-1}\dots s_2 s_u)(s_{c_2}s_{c_2-1}\dots s_2 s_1)\dots (s_{c_{k-1}}s_{c_{k-1}-1}\dots s_2 s_1)(s_{c_k}s_{c_k-1}\dots s_2 s_u)\\
&\qquad \bigm|\text{$k$ is odd, and $1\leq c_1<c_2<\dots<c_k<n$}\bigr\}
\end{aligned}
$$
is a complete set of left coset representatives of $\langle s_1,s_2,\dots,s_{n-1}\rangle$ in $W(D_n)$\footnote{By convention, if $c_1=1$ then
$s_{c_1}s_{c_1-1}\dots s_2 s_u:=s_u$.}.
\end{lem}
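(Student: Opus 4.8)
The plan is to use the signed–permutation realization \eqref{sign0} of $W(D_n)$. Write $H:=\langle s_1,\dots,s_{n-1}\rangle$, which by the identification recalled just before Lemma~\ref{length1} is the copy of $\mathfrak S_n$ consisting of those $\sigma\in W(D_n)$ with $\sigma(\{1,\dots,n\})=\{1,\dots,n\}$ (each $s_i=(i,i+1)(-i,-i-1)$ introduces no sign change), so that $[W(D_n):H]=|W(D_n)|/n!=2^{n-1}$. The key observation is that a left coset $wH$ is completely determined by the value set $V_w:=\{w(1),\dots,w(n)\}$: right multiplication by $h\in H$ merely permutes the arguments $1,\dots,n$, whence $V_{wh}=V_w$, while $V_w=V_{w'}$ forces $w^{-1}w'$ to preserve $\{1,\dots,n\}$, i.e.\ to lie in $H$. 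Each $V_w$ contains exactly one of $m,-m$ for every $1\le m\le n$, and by \eqref{sign0} the number of its negative entries is even; so, encoding $V_w$ by the set $S_w:=\{m\mid -m\in V_w\}$, the left cosets of $H$ correspond bijectively to the even-cardinality subsets of $\{1,\dots,n\}$, of which there are exactly $2^{n-1}$.

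Next I would observe that the displayed set has exactly $2^{n-1}$ elements, since each is indexed by a subset $\{c_1<\dots<c_k\}\subseteq\{1,\dots,n-1\}$ with the parity of $k$ selecting the family, and $\sum_{k=0}^{n-1}\binom{n-1}{k}=2^{n-1}$. As this equals $[W(D_n):H]$, it suffices to prove that $\{c_1<\dots<c_k\}\mapsto S_w$ is injective; being a map between sets of equal finite cardinality, it is then automatically a bijection onto the cosets, which is precisely the assertion of the lemma.

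The core computation is to evaluate the signed permutation $w=B_1\cdots B_k$, where $B_j$ is the $j$-th parenthesized block. From $s_i=(i,i+1)(-i,-i-1)$ and $s_u=(1,-2)(-1,2)$ one checks directly that a block $s_c s_{c-1}\cdots s_2 s_1$ sends $1\mapsto c+1$ and $j\mapsto j-1$ for $2\le j\le c+1$, while a block $s_c s_{c-1}\cdots s_2 s_u$ sends $1\mapsto-(c+1)$, $2\mapsto-1$ and $j\mapsto j-1$ for $3\le j\le c+1$ (both fixing larger $j$). Peeling off the first block $B_1$, whose support $\{1,\dots,c_1+1\}$ is smallest and which is applied last, and noting that the tail $B_2\cdots B_k$ is itself a member of the displayed family for $\{c_2<\dots<c_k\}$, I would prove by induction on $k$ that $w$ has the sorted one-line form $w(1)<w(2)<\dots<w(n)$, with all negative values occupying the initial positions, and that
\[
S_w=\{c_1+1,\,c_2+1,\dots,c_k+1\}\ \cup\ \begin{cases}\emptyset,&k\text{ even},\\ \{1\},&k\text{ odd}.\end{cases}
\]
Since each $c_i+1\in\{2,\dots,n\}$, this set has even size ($k$ or $k+1$), as required. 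The assignment $\{c_1<\dots<c_k\}\mapsto S_w$ is then plainly a bijection onto the even subsets of $\{1,\dots,n\}$: an even subset avoiding $1$ comes from the unique even-size $C$ obtained by subtracting $1$ from its elements, and an even subset containing $1$ comes from the odd-size $C$ obtained by deleting $1$ and then subtracting $1$ from the remainder. This gives the injectivity and finishes the proof.

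The step I expect to be the main obstacle is the inductive evaluation of $w$: the supports of consecutive blocks overlap at their right ends, and the $s_u$-blocks interact with the sign changes already produced by the tail, so one must track carefully how $B_1$ redistributes the small values of $B_2\cdots B_k$ over $\{\pm1,\dots,\pm(c_1+1)\}$ — in particular the fate of the value $-1$, which governs the parity correction $\{1\}$ in the formula above. Everything else (the index computation, the coset invariant, and the final counting bijection) is routine; as an independent check one may invoke Lemma~\ref{length1}, since $w(i)<w(i+1)$ for all $i$ gives $\ell(ws_i)>\ell(w)$ and thus exhibits each listed $w$ as the unique minimal-length distinguished representative of its coset.
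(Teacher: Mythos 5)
Your proposal is correct and takes essentially the same route as the paper: the paper's one-sentence proof likewise verifies directly that the listed elements are the (distinct, minimal-length) left coset representatives and then concludes by the counting argument based on $|W(D_n)|=2^{n-1}|\mathfrak{S}_n|$. Your block-by-block computation of the sign set $S_w$ (equivalently, of the increasing one-line form) is precisely the detail the paper leaves to the reader as ``one can check directly,'' so this is a fleshed-out version of the same argument rather than a different one.
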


\begin{proof} One can check directly that the elements in the above two sets are minimal length left coset representatives of
$\langle s_1,s_2,\dots,s_{n-1}\rangle$ in $W(D_n)$. Moreover, it is a complete set by a counting argument (using the well-known fact that
$|W(D_n)|=2^{n-1}|\Sym_n|$).
\end{proof}

By \cite{LV1}, there is an $\mathcal{H}_u$-module structure on $M$ which is defined as follows: for any $s\in S$ and any $w\in I_\ast$, $$\begin{aligned}
& T_s a_w=ua_w+(u+1)a_{sw}\quad\text{if\, $sw=ws>w$;}\\
& T_s a_w=(u^2-u-1)a_w+(u^2-u)a_{sw}\quad\text{if\, $sw=ws<w$;}\\
& T_s a_w=a_{sws}\quad\text{if\, $sw\neq ws>w$;}\\
& T_s a_w=(u^2-1)a_w+u^2a_{sws}\quad\text{if\, $sw\neq ws<w$.}\\
\end{aligned}
$$

\begin{lem} \label{keylem1} Let $W\in\{W(B_n),W(D_n)\}$ and $\ast=\text{id}$. The map $a_1\mapsto X_{\emptyset}$ can be extended to a well-defined
$\Q(u)$-linear map $\eta_0$
from $\Q(u)\otimes_{\mathcal{A}}M$ to $(\Q(u)\otimes_{\mathcal{A}}\mathcal{H}_u)X_{\emptyset}$ such that for any $w\in I_{\ast}$ and any reduced
$I_\ast$-expression $\sigma=(s_{j_1},\cdots,s_{j_k})$ for $w$, $$
\eta_0(a_w)=\theta_{\sigma}(X_{\emptyset}):=\theta_{\sigma,1}\circ\theta_{\sigma,2}\circ\cdots\circ\theta_{\sigma,k}(X_{\emptyset}),
$$
where for each $1\leq t\leq k$, if $$s_{j_t}(s_{j_{t+1}}\ltimes s_{j_{t+2}}\ltimes\cdots\ltimes s_{j_k})\neq (s_{j_{t+1}}\ltimes s_{j_{t+2}}\ltimes\cdots\ltimes
s_{j_k})s_{j_t}>(s_{j_{t+1}}\ltimes s_{j_{t+2}}\ltimes\cdots\ltimes s_{j_k}),$$ then we define $\theta_{\sigma,t}:=T_{s_{j_t}}$; while if $$
s_{j_t}(s_{j_{t+1}}\ltimes s_{j_{t+2}}\ltimes\cdots\ltimes s_{j_k})=(s_{j_{t+1}}\ltimes s_{j_{t+2}}\ltimes\cdots\ltimes s_{j_k})s_{j_t}>(s_{j_{t+1}}\ltimes
s_{j_{t+2}}\ltimes\cdots\ltimes s_{j_k}),$$
then we define $\theta_{\sigma,t}:=(T_{s_{j_t}}-u)/(u+1)$.
\end{lem}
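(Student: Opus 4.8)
The plan is to read the defining formula as a transport of the Hecke action and to reduce well-definedness to a finite computation via the two main theorems. First, since $\{a_w\mid w\in I_\ast\}$ is an $\mathcal A$-basis of $M$, the operator $\eta_0$ is a well-defined $\Q(u)$-linear map as soon as the prescribed value $\eta_0(a_w):=\theta_\sigma(X_\emptyset)$ is shown to be independent of the chosen reduced $I_\ast$-expression $\sigma=(s_{j_1},\cdots,s_{j_k})$ of $w$; this independence is the entire content of the lemma. I would record two facts. Each factor $\theta_{\sigma,t}$ is left multiplication by an element $h_t\in\Q(u)\otimes_{\mathcal A}\mathcal H_u$, namely $h_t=T_{s_{j_t}}$ in the non-commuting case and $h_t=(T_{s_{j_t}}-u)/(u+1)$ in the commuting case, so that $\theta_\sigma(X_\emptyset)=h_1h_2\cdots h_kX_\emptyset$. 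Moreover the choice between the two cases depends only on $s_{j_t}$ and on the involution $v_t:=s_{j_{t+1}}\ltimes\cdots\ltimes s_{j_k}$, not on its expression: by (\ref{reflection0}) and (\ref{reflection0B}) the condition $s_{j_t}v_t=v_ts_{j_t}$ is equivalent to $v_t(\alpha)=\pm\alpha$, where $s_{j_t}=s_\alpha$. Comparing with the module formulas displayed just before the lemma, $\theta_{\sigma,t}$ is exactly the operator realizing $a_{v_t}\mapsto a_{s_{j_t}\ltimes v_t}$ on $M$, which explains the two cases.

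Next I would invoke Theorem \ref{mainthm0} (type $D_n$) and Theorem \ref{mainthm0B} (type $B_n$): any two reduced $I_\ast$-expressions of $w$ are joined by a chain of basic braid $I_\ast$-transformations, so it suffices to prove $\theta_\sigma(X_\emptyset)=\theta_{\sigma'}(X_\emptyset)$ when $\sigma'$ is obtained from $\sigma$ by one transformation of Definition \ref{braid1} (resp.\ \ref{braid1B}). Such a transformation affects only a window of positions $[p,q]$. By Theorem \ref{braid0} (resp.\ \ref{braid0B}) the involution represented by every tail is unchanged outside the window: for $t>q$ the entries are literally unchanged, while for $t<p$ the element $v_t$ is unchanged because applying the transformation to the sub-sequence does not alter the involution it represents. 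Hence $h_t=h_t'$ for all $t\notin[p,q]$, and because the outside factors act by left multiplication the desired equality collapses to $(h_p\cdots h_q)\,y=(h_p'\cdots h_q')\,y$, where $y:=h_{q+1}\cdots h_kX_\emptyset$ is the common image of the tail (the same element for $\sigma$ and $\sigma'$, since the entries beyond $q$ agree).

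Since Definitions \ref{braid1} and \ref{braid1B} list only finitely many types of transformations, uniformly in $n$, what is left is a finite check, and I would split it according to whether the move is interior (the tail is nonempty and both sides carry ``$\cdots$'') or a right-end move (the tail is empty). For an interior move the tail $v:=v_q$ is arbitrary, so I would prove the stronger statement that $h_p\cdots h_q=h_p'\cdots h_q'$ holds already in $\mathcal H_u$, which is what makes the definition work for an unconstrained tail. For a commutation move this is immediate: the equivalence recorded above shows each letter keeps its commuting or non-commuting status across the swap, and the two factors commute in $\mathcal H_u$. For a move induced by an ordinary braid relation the commuting pattern along the window is pinned down from the root data by Lemma \ref{length1} (resp.\ \ref{length1B}) and synchronized by Lemma \ref{SGCM1}, after which the windowed identity follows from the braid relation $T_sT_tT_s=T_tT_sT_t$ (resp.\ its $m=4$ analogue) together with its normalized variants.

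For a right-end move the tail is empty, so $y=X_\emptyset$; here the two windowed products typically differ as elements of $\mathcal H_u$ and coincide only after being applied to $X_\emptyset$, which I would verify using the explicit form of $X_\emptyset$. The ordinary right-end swaps are handled just as in the type $A_n$ case of \cite{HZH1}, and the hard part will be the single extra transformation, i.e.\ the last move in Definition \ref{braid1} (resp.\ \ref{braid1B}), which is the genuinely new phenomenon absent from type $A$. For it one must check $h_p\cdots h_qX_\emptyset=h_p'\cdots h_q'X_\emptyset$ by an explicit computation that mixes several ordinary and normalized factors and uses the defining relations of $\mathcal H_u$; this is the Hecke-level refinement of the group identity (\ref{dd}) and its $B_n$ counterpart already established inside Theorems \ref{braid0} and \ref{braid0B}. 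I expect this to be the main obstacle, since it is the one move whose two sides are forced to agree neither by plain commutativity nor by a single braid relation; everywhere else the reduction to a calculation in $\mathcal H_u$, or against $X_\emptyset$, is routine and is controlled by Lemma \ref{SGCM1}.
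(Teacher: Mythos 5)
Your proposal is correct and follows essentially the same route as the paper's proof: both reduce well-definedness, via Theorems \ref{mainthm0} and \ref{mainthm0B}, to invariance of $\theta_\sigma(X_\emptyset)$ under a single basic braid $I_\ast$-transformation, dispose of the ordinary braid and commutation moves through identities already valid in $\mathcal{H}_u$ and of the right-end moves exactly as in the type $A$ case of \cite{HZH1}, and isolate the single extra transformation of Definition \ref{braid1} (resp.\ \ref{braid1B}) as the one genuinely new check against $X_\emptyset$. The only step you leave unexecuted is that final finite verification, which is in fact the bulk of the paper's proof: there it is carried out (for type $D$, with type $B$ said to be similar) by reducing to $n=4$ (because $\sum_{w\in W(D_4)}u^{-\ell(w)}T_w$ is a left factor of $X_\emptyset$) and then computing with the coset decomposition of Lemma \ref{decomp1} together with the relation $T_j(T_{j+1}-u)Y_\emptyset=T_{j+1}(T_j-u)Y_\emptyset$, where $Y_\emptyset=\sum_{w\in\Sym_4}u^{-\ell(w)}T_w$, imported from \cite{HZH1}.
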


\begin{proof} We only prove the lemma for the case when $W=W(D_n)$ as the other case is similar. The idea of the proof is essentially the same as we used in
the proof of \cite[Lemma 5.1]{HZH1} except that we now use Proposition \ref{braid} and have
to verify one more relation: $$\begin{aligned}
&\frac{T_2-u}{u+1}T_3T_uT_1T_2\frac{T_u-u}{u+1}\frac{T_1-u}{u+1}\frac{T_3-u}{u+1}X_{\emptyset}=\\
&\qquad\qquad \frac{T_3-u}{u+1}T_2T_uT_1T_2\frac{T_u-u}{u+1}\frac{T_1-u}{u+1}\frac{T_3-u}{u+1}X_{\emptyset} .
\end{aligned}$$

To check this relation, we can assume without loss of generality that $n=4$ because $\sum_{w\in\mathcal{H}_u(D_4)}u^{-\ell(w)}T_w$ is a left factor of
$X_{\emptyset}$.

To simplify the notation, we set $$
T_u\dots T_u:=T_uT_2T_1T_3T_2T_u,\quad T_2\dots T_u:=T_2T_1T_3T_2T_u,\quad Y_{\emptyset}:=\sum_{w\in\Sym_4}u^{-\ell(w)}T_w .
$$
By direct calculation, one can get that $$\begin{aligned}
&\quad\,T_uT_1T_2(T_3-u)(T_1-u)(T_u-u)X_{\emptyset}\\
&=T_uT_1T_2(T_3-u)(T_1-u)(T_u-u)(1+u^{-1}T_u+u^{-2}T_2T_u+u^{-3}T_3T_2T_u\\
&\qquad +u^{-3}T_1T_2T_u+u^{-4}T_1T_3T_2T_u+u^{-5}T_2T_1T_3T_2T_u+u^{-6}T_uT_2T_1T_3T_2T_u)
Y_{\emptyset}\\
&=\Bigl(uT_1T_2T_uT_2-T_1T_2T_uT_2T_1-T_1T_2T_uT_2T_3+u^{-1}T_1T_2T_uT_2T_1T_3+u^{-2}T_2\dots T_uT_3T_2\\
&\qquad -u^{-3}T_2\dots T_uT_3T_2T_1
+u^{-4}T_2\dots T_uT_3T_2T_1T_3-u^{-3}T_2\dots T_uT_3T_2T_3+\\
&\qquad 2(u^{-2}-u^{-4})T_u\dots T_uT_3T_2-
(u^{-3}-u^{-5})T_u\dots T_uT_3T_2T_1-(u^{-1}-u^{-3})T_u\dots T_uT_3\\
&\qquad +(u^{-4}-u^{-6})T_u\dots T_uT_3T_2T_1T_3-
(u^{-3}-u^{-5})T_u\dots T_uT_3T_2T_3\Bigr)Y_{\emptyset} .
\end{aligned}
$$

Let $Z_0$ be the element in the big bracket of the last equality. We want to show that $(T_2T_3-T_3T_2+uT_2-uT_3)Z_0Y_{\emptyset}=0$. Using Lemma \ref{decomp1},
 we see that if we express $(T_2T_3-T_3T_2+uT_2-uT_3)Z_0$ as a linear combination of standard bases $\{T_w|w\in W(D_n)\}$, then $T_w$ occurs with non-zero
  coefficient only if $$
w\in s_u\dots s_u\Sym_4\bigcup s_2\dots s_u\Sym_4\bigcup s_3s_1s_2s_u\Sym_4 \bigcup s_1s_2s_u\Sym_4 .
$$

By the proof of \cite[Lemma 5.1]{HZH1}, we know that for any $1\leq j<3$, \begin{equation}
\label{kill}
T_j(T_{j+1}-u)Y_{\emptyset}=T_{j+1}(T_{j}-u)Y_{\emptyset} .
\end{equation}
Using (\ref{kill}) and consider the above four left cosets separately, one can check that $(T_2T_3-T_3T_2+uT_2-uT_3)Z_0Y_{\emptyset}=0$. For example, if we
consider the term $T_w$ which occurs in $(T_2T_3-T_3T_2+uT_2-uT_3)Z_0Y_{\emptyset}$ such that $w\in s_1s_2s_u\Sym_4$, then we shall get that it is equal to
$$\begin{aligned}
&\quad\,\bigl(u^2T_1T_2T_uT_1T_2-uT_1T_2T_uT_1T_2T_1-uT_1T_2T_uT_1T_2T_3+T_1T_2T_uT_1T_2T_1T_3\\
&\qquad -u^2T_1T_2T_uT_3T_2+uT_1T_2T_uT_3T_2T_3+uT_1T_2T_uT_3T_2T_1-T_1T_2T_uT_3T_2T_1T_3\bigr)Y_{\emptyset}\\
&=T_1T_2T_u\bigl(u^2T_1T_2-uT_1T_2T_1-uT_1T_2T_3-u^2T_3T_2+uT_3T_2T_3+uT_3T_2T_1\\
&\qquad +T_2T_1T_2T_3-T_2T_3T_2T_1\bigr)Y_{\emptyset}\\
&=T_1T_2T_u\biggl(u^2T_1T_2-uT_1T_2T_1-uT_1(T_3T_2-uT_3+uT_2)-u^2T_3T_2+uT_3T_2T_3+\\
&\qquad uT_3(T_1T_2-uT_1+uT_2)+T_2T_1(T_3T_2-uT_3+uT_2)-T_2T_3(T_1T_2-uT_1+uT_2)\biggr)Y_{\emptyset}\\
&=0 ,
\end{aligned}
$$
as required, where we have used (\ref{kill}) in the second equality. For the other three cosets, one can do some similar calculation. We leave the details to
the readers.
\end{proof}

\begin{cor}\label{keycor4} With the notations as in Lemma \ref{keylem1}, the $\Q(u)$-linear map $\eta_0$ is a
left $(\Q(u)\otimes_{\mathcal{A}}\mathcal{H}_u)$-module homomorphism. In particular, $\eta_0=\eta$ is a well-defined surjective left
$(\Q(u)\otimes_{\mathcal{A}}\mathcal{H}_u)$-module homomorphism from $\Q(u)\otimes_{\mathcal{A}}M$ onto $(\Q(u)\otimes_{\mathcal{A}}\mathcal{H}_u)X_{\emptyset}$.
\end{cor}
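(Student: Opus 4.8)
The plan is to verify that $\eta_0$ intertwines the $\mathcal{H}_u$-actions, i.e.\ that $\eta_0(T_s a_w)=T_s\eta_0(a_w)$ for every $s\in S$ and $w\in I_\ast$; once this is established the remaining assertions are formal. Since $\eta_0$ is already $\Q(u)$-linear and well-defined by Lemma \ref{keylem1}, I would fix $s$ and $w$ and run through the four cases describing $T_s a_w$ in the displayed action. The two ``ascent'' cases ($sw>w$) are handled directly from the defining formula $\eta_0(a_w)=\theta_\sigma(X_\emptyset)$: I prepend $s$ to a reduced $I_\ast$-expression of $w$ to obtain a reduced $I_\ast$-expression of $s\ltimes w$ (using Lemma \ref{rankfunc}), and read off the leading operator $\theta_{\sigma,0}$ according to the recipe in Lemma \ref{keylem1}.

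Concretely, when $sw=ws>w$ one has $s\ltimes w=sw$ with $\rho(sw)=\rho(w)+1$, and the leading operator is $(T_s-u)/(u+1)$, so $\eta_0(a_{sw})=\tfrac{T_s-u}{u+1}\eta_0(a_w)$; substituting into $\eta_0(T_s a_w)=u\eta_0(a_w)+(u+1)\eta_0(a_{sw})$ collapses at once to $T_s\eta_0(a_w)$. When $sw\neq ws>w$ one has $s\ltimes w=sws$ with leading operator $T_s$, so $\eta_0(a_{sws})=T_s\eta_0(a_w)$ and the identity $\eta_0(T_s a_w)=\eta_0(a_{sws})=T_s\eta_0(a_w)$ is immediate.

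For the two ``descent'' cases ($sw<w$) I would first use Lemma \ref{square} together with Corollary \ref{length0} and Corollary \ref{deletion2} to write $w=s\ltimes w'$ with $w'=s\ltimes w$ and $\rho(w)=\rho(w')+1$, so that a reduced $I_\ast$-expression for $w$ begins with $s$. Each descent case is then the image under the corresponding ascent computation applied to $w'$: in the commuting case $\eta_0(a_w)=\tfrac{T_s-u}{u+1}\eta_0(a_{w'})$, and in the non-commuting case $\eta_0(a_w)=T_s\eta_0(a_{w'})$. Substituting these into the two descent formulas for $T_s a_w$ and simplifying by means of the quadratic relation $T_s^2=(u^2-1)T_s+u^2$ in $\mathcal{H}_u$ yields $T_s\eta_0(a_w)$ in both cases after a short polynomial computation; the only arithmetic to watch is the collapse of the constant terms, which I expect to be the most error-prone but entirely routine step.

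Finally, well-definedness of $\eta_0$ is exactly the content of Lemma \ref{keylem1}, and $\eta_0(a_1)=X_\emptyset$ by construction. Having shown that $\eta_0$ is an $(\Q(u)\otimes_{\mathcal{A}}\mathcal{H}_u)$-module homomorphism, its image is an $\mathcal{H}_u$-submodule containing $X_\emptyset$, hence all of $(\Q(u)\otimes_{\mathcal{A}}\mathcal{H}_u)X_\emptyset$, which gives surjectivity and simultaneously identifies $\eta_0$ with the map $\eta$ of the introduction. The main obstacle is not any single step but the bookkeeping of which branch of the $\theta_{\sigma,\bullet}$-recipe applies in each of the four Hecke cases, together with correctly matching $s\ltimes w'$ against $w$ via Lemma \ref{square}; crucially, the braid-invariance that makes $\theta_\sigma$ independent of the chosen reduced $I_\ast$-expression has already been absorbed into Lemma \ref{keylem1}, so no further appeal to the main braid theorems is needed here.
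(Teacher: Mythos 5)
Your proposal is correct and follows essentially the same route as the paper: the paper's proof of this corollary simply invokes Lemma \ref{keylem1} together with ``a similar argument'' from \cite[Lemma 5.3, Theorem 5.5]{HZH1}, and that argument is precisely the four-case intertwining check $\eta_0(T_s a_w)=T_s\eta_0(a_w)$ (using the quadratic relation in the descent cases) followed by surjectivity via cyclic generation that you spell out. Your observation that all the braid-invariance difficulty is already absorbed into the well-definedness statement of Lemma \ref{keylem1} matches the paper's division of labor exactly.
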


\begin{proof} This follows from Lemma \ref{keylem1} and a similar argument used in the proof \cite[Lemma 5.3, Theorem 5.5]{HZH1}.
\end{proof}

\begin{rem} 1) Lusztig proved in \cite{Lu3} that his conjecture holds for any Coxeter group and any $\ast$ by completely different approach. In other words,
$\eta$ is always a left $(\Q(u)\otimes_{\mathcal{A}}\mathcal{H}_u)$-module isomorphism from $\Q(u)\otimes_{\mathcal{A}}M$ onto
$(\Q(u)\otimes_{\mathcal{A}}\mathcal{H}_u)X_{\emptyset}$.

2) We conjecture that for any Coxeter system $(W,S)$ with $S$ finite and any automorphism ``$\ast$" as described in Section 1, there exists a finite
set of basic braid $I_\ast$-transformations that span and preserve the reduced $I_\ast$-expressions for any twisted involutions in $W$ and which can be explicitly
described for all $W$ simultaneously.
\end{rem}

\bigskip


\begin{thebibliography}{00}


\bibitem{BB}
{\sc A.~Bj\"{o}rner, F.~Brenti}, {\em Combinatorics of Coxeter groups}, Graduate Texts in Mathematics, {\bf 231}, Springer, 2005.

\bibitem{CJ}
{\sc M.B.~Can, M.~Joyce}, {\em Weak order on complete quadrics}, Trans. Amer. Math. Soc., {\bf 365}(12) (2013), 6269--6282.

\bibitem{CJW}
{\sc M.B.~Can, M.~Joyce, B.~Wyser}, {\em Chains in weak order posets associated to involutions},
J. Combin. Theory Ser. A, {\bf 137} (2016), 207--225.

\bibitem{DDPW}
{\sc B.~Deng, J.~Du, B.~Parshall and J.~Wang}, {\em Finite dimensional algebras and quantum groups}, Mathematical Surveys and Monographs, {\bf 150}, American Mathematical Society, 2008.

\bibitem{Gec}
{\sc M.~Geck}, {\em Hecke algberas of finite type are cellular},  Invent. Math., {\bf 169} (2007), 501--517.

\bibitem{GP}
{\sc M.~Geck and G. Pfeiffer}, {\em Characters of finite Coxeter groups and Iwahori-Hecke algebras}, London Mathematical Society Monographs New Series
{\bf 446}, Clarendon Press Oxford, 2000.

\bibitem{HMP1}
{\sc Z.~Hamaker, E.~Marberg and B.~Pawlowski}, {\em Involution words: counting problems and connections to Schubert calculus for symmetric orbit closures}, preprint, arXiv: arXiv:1508.01823, 2015.

\bibitem{HMP2}
\leavevmode\vrule height 2pt depth -1.6pt width 23pt, {\em Involution words II: braid relations and atomic structures}, preprint, arXiv:1601.02269, 2016.

\bibitem{HuJ2} {\sc J.~Hu}, {\em Quasi-parabolic subgroups of the Weyl group of
type $D$}, European Journal of Combinatorics, {\bf 28} (2007), 807--821.

\bibitem{Hu1}
{\sc A.~Hultman}, {\em Fixed points of involutive automorphisms of the Bruhat order}, Adv. Math., {\bf 195}(1) (2005), 283-¨C296.

\bibitem{Hu2}
\leavevmode\vrule height 2pt depth -1.6pt width 23pt,
{\em The Combinatorics of twisted involutions in Coxeter groups}, Trans. Amer. Math. Soc., {\bf 359}(6) (2007), 2787--2798.

\bibitem{Hum} {\sc J.E.~Humphreys}, {\em Reflection Groups and Coxeter Groups},
  Cambridge Studies in Advanced Mathematics, Vol. {\bf 29}, Cambridge Univ. Press,
  Cambridge, UK, 1990.

\bibitem{HZH1}
{\sc J.~Hu, J. Zhang}, {\em On involutions in symmetric groups and a conjecture of Lusztig}, Adv. Math., {\bf 364} (2016), 1189¡ª-1254.

\bibitem{Ko}
{\sc R.E.~Kottwitz}, {\em Involutions in Weyl groups}, Representation Theory, {\bf 4} (2000), 1--15.

\bibitem{Lu1} {\sc G.~Lusztig}, {\em A bar operator for involutions in a Coxeter groups}, (2012), preprint, arXiv:1112.0969.

\bibitem{Lu2}
\leavevmode\vrule height 2pt depth -1.6pt width 23pt, {\em Asymptotic Hecke algebras and involutions}, (2012), preprint, arXiv:1204.0276.

\bibitem{Lu3}
\leavevmode\vrule height 2pt depth -1.6pt width 23pt, {\em  An involution based left ideal in the Hecke algebra}, Representation Theory, {\bf 20}, (8), (2016), 172--186.

\bibitem{LV1}
{\sc G.~Lusztig, D.~Vogan}, {\em Hecke algebras and involutions in Weyl groups}, Bulletin of the Institute of Mathematics Academia Sinica (New Series), {\bf 7}(3) (2012), 323--354.

\bibitem{Mar}
{\sc E.~Marberg}, {\em Positivity conjectures for Kazhdan-Lusztig theory on twisted involutions: the universal case}, Represent. Theory, {\bf 18}  (2014), 88--116.

\bibitem{Mat} {\sc H.~Matsumoto}, {\em G\'{e}n\'{e}rateurs et relations des groupes de Weyl g\'{e}n\'{e}ralis\'{e}s}, C. R. Acad. Sci.
Paris, {\bf 258}, (1964), 3419--3422.

\bibitem{RS1}
{\sc R.W.~Richardson, T.A.~Springer}, {\em The Bruhat order on symmetric varieties}, Geom. Dedicata, {\bf 35} (1990), 389--436.

\bibitem{RS2}
\leavevmode\vrule height 2pt depth -1.6pt width 23pt, {\em Complements to: ``The Bruhat order on symmetric varieties"}, Geom. Dedicata, {\bf 49} (1994), 231--238.

\bibitem{WY}
{\sc B.J.~Wyser, A.~Yong}, {\em Polynomials for symmetric orbit closures in the flag variety}, Transform. Groups, to appear.

\end{thebibliography}
\end{document}